\providecommand{\U}[1]{\protect\rule{.1in}{.1in}}
\theoremstyle{definition}
\newtheorem{theo}{Theorem}[section]
\newenvironment{theorem}[1][]
{\begin{theo}[#1]\begin{leftbar}}
{\end{leftbar}\end{theo}}
\newtheorem{lem}[theo]{Lemma}
\newenvironment{lemma}[1][]
{\begin{lem}[#1]\begin{leftbar}}
{\end{leftbar}\end{lem}}
\newtheorem{prop}[theo]{Proposition}
\newenvironment{proposition}[1][]
{\begin{prop}[#1]\begin{leftbar}}
{\end{leftbar}\end{prop}}
\newtheorem{defi}[theo]{Definition}
\newenvironment{definition}[1][]
{\begin{defi}[#1]\begin{leftbar}}
{\end{leftbar}\end{defi}}
\newtheorem{remk}[theo]{Remark}
\newenvironment{remark}[1][]
{\begin{remk}[#1]\begin{leftbar}}
{\end{leftbar}\end{remk}}
\newtheorem{coro}[theo]{Corollary}
\newenvironment{corollary}[1][]
{\begin{coro}[#1]\begin{leftbar}}
{\end{leftbar}\end{coro}}
\newtheorem{conv}[theo]{Convention}
\newtheorem{warn}[theo]{Warning}
\newtheorem{conj}[theo]{Conjecture}
\newtheorem{exmp}[theo]{Example}
\newenvironment{example}[1][]
{\begin{exmp}[#1]\begin{leftbar}}
{\end{leftbar}\end{exmp}}
\newenvironment{todo}{}{}
\newcommand{\Nplus}{\mathbb{N}_{+}}
\newcommand{\NN}{\mathbb{N}}
\let\sumnonlimits\sum
\let\prodnonlimits\prod
\renewcommand{\sum}{\sumnonlimits\limits}
\renewcommand{\prod}{\prodnonlimits\limits}
\def\seplist{{\operatorname{seplist}}} 
\def\ceq{{\operatorname{ceq}}}
\def\ircont{{\operatorname{ircont}}}
\def\cont{{\operatorname{cont}}}
\def\ceqvar{{{\alpha}}} 
\def\seplistvar{{{\nu}}} 
\def\supp{{\operatorname{supp}}}
\def\NS{{\operatorname{NR}}}
\def\g{{\widetilde{g}}}
\def\t{{\mathbf{t}}}
\def\x{{\mathbf{x}}}
\def\lm{{\lambda/\mu}}
\def\N{{\mathbb{N}}}
\def\Z{{\mathbb{Z}}}
\def\B{{\mathbf{B}}}
\def\OneTwoRPP{{\operatorname{RPP}^{12}\left(  \lambda/\mu\right)}}
\def\BenignTables{{\operatorname{BT}^{12}\left(  \lambda/\mu\right)}}
\def\OneTwoRPPCutvar{{\operatorname{RPP}^{12}\left(  \lambda/\mu ;\seplistvar \right)}}
\def\flip{{\operatorname{flip}}}
\begin{document}

\title{Refined dual stable Grothendieck polynomials and generalized Bender-Knuth involutions}
\author{Pavel Galashin, Darij Grinberg, and Gaku Liu}
\date{\today}
\maketitle

\begin{abstract}
The dual stable Grothendieck polynomials are a deformation of
the Schur functions, originating in the study of the $K$-theory of the
Grassmannian. We generalize these polynomials by introducing a
countable family of additional parameters, and we prove that this
generalization still defines symmetric functions. For this fact, we
give two self-contained proofs, one of which constructs a family of
involutions on the set of reverse plane partitions generalizing the
Bender-Knuth involutions on semistandard tableaux, whereas the other
classifies the structure of reverse plane partitions with entries $1$
and $2$.
\end{abstract}

\section{Introduction}

Thomas Lam and Pavlo Pylyavskyy, in \cite[\S 9.1]{LamPyl}, (and earlier Mark
Shimozono and Mike Zabrocki in unpublished work of 2003) studied \textit{dual
stable Grothendieck polynomials}, a deformation (in a sense) of the Schur
functions. Let us briefly recount their definition.

Let $\lambda/\mu$ be a skew partition. The Schur function $s_{\lambda/\mu}$ is
a multivariate generating function for the semistandard tableaux of shape
$\lambda/\mu$. In the same vein, the dual stable Grothendieck
polynomial 
$g_{\lambda/\mu}$ is
a generating function for the reverse plane partitions of shape $\lambda/\mu$;
these, unlike semistandard tableaux, are only required to have their entries
increase \textit{weakly} down columns (and along rows). More precisely,
$g_{\lambda/\mu}$ is a formal power series in countably many commuting
indeterminates $x_{1},x_{2},x_{3},\ldots$ defined by
\[
g_{\lambda/\mu}=\sum_{\substack{T\text{ is a reverse plane}\\\text{partition
of shape }\lambda/\mu}}\mathbf{x}^{\operatorname*{ircont}\left(  T\right)  },
\]
where $\mathbf{x}^{\operatorname*{ircont}\left(  T\right)  }$ is the monomial
$x_{1}^{a_{1}}x_{2}^{a_{2}}x_{3}^{a_{3}}\cdots$ whose $i$-th exponent $a_{i}$
is the number of \textit{columns} (rather than cells) 
of $T$ containing the entry $i$. As proven in
\cite[\S 9.1]{LamPyl}, this power series $g_{\lambda/\mu}$ is a symmetric
function (albeit, unlike $s_{\lambda/\mu}$, an inhomogeneous one in general).
Lam and Pylyavskyy connect the $g_{\lambda/\mu}$ to the (more familiar)
\textit{stable Grothendieck polynomials} $G_{\lambda/\mu}$ (via a duality
between the symmetric functions and their completion, which explains the name
of the $g_{\lambda/\mu}$; see \cite[\S 9.4]{LamPyl}) and to the $K$-theory of
Grassmannians (\cite[\S 9.5]{LamPyl}).

We devise a common generalization of the dual stable Grothendieck polynomial
$g_{\lambda/\mu}$ and the classical skew Schur function $s_{\lambda/\mu}$.
Namely, if $t_{1},t_{2},t_{3},\ldots$ are countably many indeterminates, then we
set%
\[
\widetilde{g}_{\lambda/\mu}=\sum_{\substack{T\text{ is a reverse
plane}\\\text{partition of shape }\lambda/\mu}}\mathbf{t}^{\operatorname*{ceq}%
\left(  T\right)  }\mathbf{x}^{\operatorname*{ircont}\left(  T\right)  },
\]
where $\mathbf{t}^{\operatorname*{ceq}\left(  T\right)  }$ is the product
$t_{1}^{b_{1}}t_{2}^{b_{2}}t_{3}^{b_{3}}\cdots$ whose $i$-th exponent $b_{i}$
is the number of cells in the $i$-th row of $T$ whose entry equals the entry
of their neighbor cell directly below them. This $\widetilde{g}_{\lambda/\mu}$
becomes $g_{\lambda/\mu}$ when all the $t_{i}$ are set to $1$, and becomes
$s_{\lambda/\mu}$ when all the $t_{i}$ are set to $0$.

Our main result, Theorem \ref{thm.gtilde.symm}, states that
$\widetilde{g}_{\lambda/\mu}$ is a symmetric function (in the $x_{1}%
,x_{2},x_{3},\ldots$).

We prove this result (thus obtaining a new proof of
\cite[Theorem 9.1]{LamPyl}) first using an elaborate generalization
of the classical Bender-Knuth involutions to reverse plane partitions,
and then for a second time by analyzing the structure of reverse plane
partitions whose entries lie in $\left\{1, 2\right\}$. The second
proof reflects back on the first, in particular providing an
alternative definition of the generalized Bender-Knuth involutions
constructed in the first proof, and showing that these involutions
are (in a sense) ``the only reasonable choice''.

The present paper is organized as follows: In Section \ref{sect.notations}, we
recall classical definitions and introduce notations pertaining to
combinatorics and symmetric functions. In Section \ref{sect.def}, we define
the refined dual stable Grothendieck polynomials $\widetilde{g}_{\lambda/\mu}$, state
our main result (that they are symmetric functions), and do the first steps of
its proof (by reducing it to a purely combinatorial statement about the
existence of an involution with certain properties). In Section \ref{sect.construction}, we describe the idea of constructing this involution in an elementary way without proofs. In Section
\ref{sect.proof}, we prove various properties of this involution advertised in Section \ref{sect.construction}, thus finishing the proof of our main result. In Section \ref{sect.BKclassical}, we
recapitulate the definition of the classical Bender-Knuth involution, and
show that our involution is a generalization of the latter.
Finally, in Section \ref{sect.structure} we study the structure of
reverse plane partitions with entries belonging to $\left\{1, 2\right\}$,
which (in particular) gives us an explicit formula for the
$\mathbf{t}$-coefficients of $\g_\lm(x_1,x_2,0,0,\dots;\t)$, and shines
a new light on the involution constructed in Sections \ref{sect.construction}
and \ref{sect.proof}
(also showing that it is the unique involution that shares certain natural
properties with the classical Bender-Knuth involutions).

\begin{todo}
\begin{itemize}
\item More reasons why the reader should
care about dual stable Grothendieck polynomials?

\item What I wrote about $K$-theory is rather shallow. More details?

More specifically, and interestingly, I am wondering if our $\widetilde{g}%
_{\lambda/\mu}$ aren't K-theoretical classes of something multigraded (toric
structure on the Grassmannian? there are two sides from which we can multiply
a matrix by a diagonal matrix, and even if we \textquotedblleft use
up\textquotedblright\ one for taking \textquotedblleft
characters\textquotedblright, the other one is still there).
\end{itemize}
\end{todo}

\subsection{Acknowledgments}

We owe our familiarity with dual stable Grothendieck polynomials to Richard
Stanley. We thank Alexander Postnikov for providing context and motivation,
and Thomas Lam and Pavlo Pylyavskyy for interesting conversations.

\begin{todo}
Keep this up to date.
\end{todo}

\section{\label{sect.notations}Notations and definitions}

Let us begin by defining our notations (including some standard conventions
from algebraic combinatorics).

\subsection{Partitions and tableaux}

We set $\mathbb{N}=\left\{  0,1,2,\ldots\right\}  $ and $\mathbb{N}%
_{+}=\left\{  1,2,3,\ldots\right\}  $.

 A sequence $\alpha=\left(\alpha_{1},\alpha_{2},\alpha_{3},\ldots\right)$ of nonnegative integers is called a \textit{weak composition} if the sum of its entries (denoted $\left\vert \alpha\right\vert$) is finite.
 We shall always write $\alpha_i$ for the $i$-th entry of a weak composition $\alpha$.

A \textit{partition} is a weak composition $\left(  \alpha_{1},\alpha
_{2},\alpha_{3},\ldots\right)  $ satisfying $\alpha_{1}\geq\alpha_{2}
\geq\alpha_{3}\geq\cdots$.
As usual, we often omit trailing zeroes when writing a partition (e.g.,
the partition $\left(5,2,1,0,0,0,\ldots\right)$ can thus be written as
$\left(5,2,1\right)$).

We identify each partition $\lambda$ with the subset
$\left\{ \left( i, j \right) \in \Nplus^2 \mid j \leq \lambda_i \right\}$
of $\Nplus^{2}$ (called \textit{the Young diagram of $\lambda$}).
We draw this subset as a Young diagram (which is a left-aligned table of
empty boxes, where the box $(1,1)$ is in the top-left corner while the
box $(2,1)$ is directly below it; this is the \textit{English notation},
also known as the \textit{matrix notation}); see \cite{Fulton97} for
the detailed definition.

\begin{todo}
What is the easiest place to refer the reader to for Young diagram basics,
which uses notations compatible with ours (such as "filling" and "skew
partition")?
\end{todo}

A \textit{skew partition} $\lambda/\mu$ is a pair $\left(\lambda, \mu\right)$ of partitions satisfying $\mu\subseteq\lambda$ (as subsets of the plane). In this case, we shall also often use the notation $\lambda/\mu$ for the set-theoretic difference of $\lambda$ and $\mu$.

 If $\lm$ is a skew partition, then a \textit{filling} of $\lm$ means a map $T:\lm\rightarrow\Nplus$. It is visually represented by drawing $\lm$ and filling each box $c$ with the entry $T(c)$. Three examples of a filling can be found on Figure \ref{fig:fillings}.

 A filling $T:\lm\rightarrow\Nplus$ of $\lm$ is called a \textit{reverse plane partition of shape $\lm$} if its values increase weakly in each row of $\lm$ from left to right and in each column of $\lm$ from top to bottom. If, in addition, the values of $T$ increase strictly down each column, then $T$ is called a \textit{semistandard tableau of shape $\lm$}. (See Fulton's \cite{Fulton97} for an exposition of properties and applications of semistandard tableaux\footnote{Fulton calls semistandard tableaux just ``tableaux'', but otherwise is consistent with most of our notation.}.) We denote the set of all reverse plane partitions of shape $\lm$ by $\operatorname{RPP}\left(  \lambda/\mu\right)$.  We abbreviate reverse plane partitions as \textit{rpps}. 
 
 Examples of an rpp, of a non-rpp and of a semistandard tableau can be found on Figure \ref{fig:fillings}.

\begin{figure}
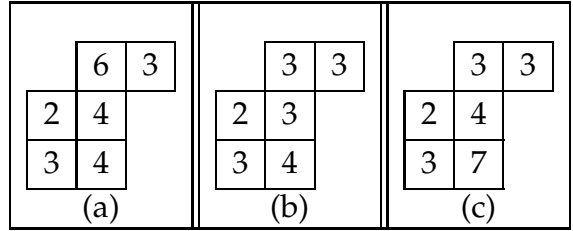

\begin{center}
 
\begin{tabular}{|c||c||c|}\hline
 & & \\
\begin{ytableau}
\none& 6 & 3\\
2 & 4 \\
3 & 4\\
\end{ytableau} &
\begin{ytableau}
\none& 3 & 3\\
2 & 3 \\
3 & 4\\
\end{ytableau} &
\begin{ytableau}
\none& 3 & 3\\
2 & 4 \\
3 & 7\\
\end{ytableau} \\
(a) & (b) & (c)\\
\hline
\end{tabular}\\
\caption{\label{fig:fillings} Fillings of $(3,2,2)/(1)$: (a) is not an rpp as it has a $4$ below a $6$, (b) is an rpp but not a semistandard tableau as it has a $3$ below a $3$, (c) is a semistandard tableau (and hence also an rpp).}
\end{center}

\end{figure}

\subsection{Symmetric functions}

A \textit{symmetric function} is defined to be a
bounded-degree\footnote{A power series is said to be \textit{bounded-degree}
if there is an $N \in \mathbb{N}$ such that only monomials of degree $\leq N$
appear in the series.}
power series in countably many indeterminates $x_1,x_2,x_3,\dots$
over $\Z$ that is invariant under (finite)
permutations\footnote{A permutation is \textit{finite} if it fixes all but
finitely many elements.} of $x_1,x_2,x_3,\dots$.

The symmetric functions form a ring, which is called the \textit{ring of
symmetric functions} and denoted by $\Lambda$.
(In \cite{LamPyl} this ring is denoted by $\operatorname*{Sym}$, while the notation $\Lambda$ is reserved for the
set of all partitions.) Much research has been done on symmetric functions and
their relations to Young diagrams and tableaux; see
\cite[Chapter 7]{Stan99}, \cite{Macdon95} and \cite[Chapter 2]{GriRei15} for expositions.

 Given a filling $T$ of a skew partition $\lm$, its \textit{content} is a weak composition $\operatorname*{cont}\left(  T\right)=\left(r_1,r_2,r_3,\dots\right)$, where $r_i=\left|T^{-1}(i)\right|$ is the number of entries of $T$ equal to $i$. For a skew partition $\lambda/\mu$, we define the \textit{Schur function}
$s_{\lambda/\mu}$ to be the formal power series 
\[
s_\lm(x_1,x_2,\dots)
= \sum_{\substack{T\text{ is a semistandard}\\\text{tableau of shape } \lm}}
\mathbf{x}^{\operatorname*{cont}\left(  T\right)  }
\in \Z\left[\left[x_1,x_2,x_3,\ldots\right]\right] .
\]
Here, for every weak composition $\alpha = \left(\alpha_1, \alpha_2, \alpha_3, \ldots\right)$, we define a monomial $\x^\alpha$ to be $x_1^{\alpha_1} x_2^{\alpha_2} x_3^{\alpha_3} \cdots$.
These Schur functions are symmetric:

 
\begin{proposition}
\label{prop.schur.symm}We have $s_{\lambda/\mu}\in\Lambda$ for every skew
partition $\lambda/\mu$.
\end{proposition}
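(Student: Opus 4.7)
The plan is to establish symmetry via a direct combinatorial involution. The power series $s_{\lambda/\mu}$ is manifestly bounded-degree, since every semistandard tableau $T$ of shape $\lambda/\mu$ contributes a monomial of degree $\left|\lambda/\mu\right|$. Therefore it suffices to show that $s_{\lambda/\mu}$ is invariant under every permutation of the variables $x_1, x_2, x_3, \ldots$. Since the finite symmetric group on $\{1, 2, \ldots, N\}$ is generated by the adjacent transpositions $(i, i+1)$, and since $s_{\lambda/\mu}$ uses only finitely many indeterminates in each fixed-degree component, it is enough to exhibit, for each positive integer $i$, a bijection $B_i : \operatorname{SSYT}(\lambda/\mu) \to \operatorname{SSYT}(\lambda/\mu)$ on semistandard tableaux of shape $\lambda/\mu$ that interchanges the number of $i$'s with the number of $(i+1)$'s in the content while leaving all other entries of $\operatorname{cont}(T)$ unchanged.

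The construction I would use is the classical Bender-Knuth involution. Work row by row, and within each row, identify the cells containing $i$ or $i+1$. Call a cell containing $i$ \emph{locked} if the cell directly below it contains an $i+1$, and call a cell containing $i+1$ \emph{locked} if the cell directly above it contains an $i$; these locked pairs must be preserved, since swapping either entry would violate the strict column increase. The remaining (unlocked) $i$'s and $(i+1)$'s in the row, by semistandardness, form a contiguous block reading $\underbrace{i\,i\cdots i}_{a}\underbrace{(i+1)\,(i+1)\cdots(i+1)}_{b}$ from left to right; replace this block by $\underbrace{i\,i\cdots i}_{b}\underbrace{(i+1)\,(i+1)\cdots(i+1)}_{a}$. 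Define $B_i(T)$ to be the result of performing this swap in every row.

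The key verifications are that (a) $B_i(T)$ is again a semistandard tableau, (b) $B_i$ is an involution, and (c) $B_i$ swaps the $i$- and $(i+1)$-multiplicities of the content. Claim (c) is immediate, since the total number of $i$'s and $(i+1)$'s in each row is preserved and the locked pairs contribute equally to both counts. Claim (b) is clear because the locked positions of $T$ and $B_i(T)$ coincide and the operation on unlocked positions is self-inverse. Claim (a), which is the only nontrivial point, reduces to checking that after the swap the column-strict and row-weak inequalities still hold; this follows because the locked $i$'s and $(i+1)$'s remain in place (guaranteeing no new violations across adjacent rows), and within each row the swapped block is still a weakly increasing run of $i$'s followed by $(i+1)$'s.

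Granted (a)--(c), the bijection $B_i$ gives
\[
s_{\lambda/\mu}(x_1, x_2, \ldots) = \sum_{T \in \operatorname{SSYT}(\lambda/\mu)} \mathbf{x}^{\operatorname{cont}(T)} = \sum_{T \in \operatorname{SSYT}(\lambda/\mu)} \mathbf{x}^{\operatorname{cont}(B_i(T))},
\]
and the right-hand side is exactly $s_{\lambda/\mu}$ with $x_i$ and $x_{i+1}$ interchanged. Thus $s_{\lambda/\mu}$ is invariant under every adjacent transposition, hence symmetric. I expect the main obstacle to be a clean write-up of (a): one has to check the column inequalities at the boundary of the swapped block in each row, and confirm that the notion of "locked" used in consecutive rows is consistent. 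Everything else is bookkeeping.
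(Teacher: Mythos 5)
Your proof is correct and is exactly the approach the paper has in mind: the paper does not spell out a proof of this proposition but cites the standard bijective argument via the Bender--Knuth involutions, whose definition it recalls in Section \ref{sect.BKclassical}, and your ``locked'' cells coincide with the paper's ``ignored'' entries (a column of a semistandard tableau containing both $i$ and $i+1$ must have them vertically adjacent). The reduction to adjacent transpositions and the verification of claims (a)--(c) match the standard argument, so there is nothing to add.
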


This result appears, e.g., in \cite[Theorem 7.10.2]{Stan99} and
\cite[Proposition 2.11]{GriRei15}; it is commonly proven bijectively using the
so-called \textit{Bender-Knuth involutions}. We shall recall the definitions
of these involutions in Section \ref{sect.BKclassical}.

 Replacing ``semistandard
tableau'' by ``rpp'' in the
definition of a Schur function in general gives a non-symmetric function. Nevertheless, Lam
and Pylyavskyy \cite[\S 9]{LamPyl} have been able to define
symmetric functions from rpps, albeit using a subtler construction
instead of the content $\operatorname{cont}\left(  T\right)$.

  Namely, for a filling $T$ of a skew partition $\lm$, we define its
\textit{irredundant content} (or, by way of abbreviation, its
\textit{ircont statistic})
as the weak composition $\operatorname*{ircont}\left(
T\right) = \left(r_1,r_2,r_3,\dots\right)$ where $r_i$ is the number of \emph{columns} (rather than cells) of $T$ that contain an entry equal to $i$. For instance, if $T_a$, $T_b$, and $T_c$ are the fillings from Figure \ref{fig:fillings}, then their irredundant contents are
\begin{align*}
\ircont(T_a)=(0,1,2,1,0,1),\ \ircont(T_b)=(0,1,3,1),\ \ircont(T_c)=(0,1,3,1,0,0,1)
\end{align*}
(where we omit trailing zeroes),
because, for example, $T_a$ has one column with a $4$ in it (so $(\ircont(T_a))_4=1$) and $T_b$ contains three columns with a $3$ (so $(\ircont(T_b))_3=3$).

Notice that if $T$ is a semistandard tableau, then $\cont(T)$ and $\ircont(T)$ coincide.

For the rest of this section, we fix a skew partition $\lambda/\mu$. Now, the
\textit{dual stable Grothendieck polynomial} $g_{\lambda/\mu}$ is defined to
be the formal power series%
\[
\sum_{\substack{T\text{ is an rpp}\\\text{of shape }\lm}}\mathbf{x}^{\operatorname*{ircont}\left(  T\right)  }.
\]
Unlike the Schur function $s_{\lambda/\mu}$, it is (in
general) not homogeneous, because whenever a column of an rpp $T$ contains an
entry several times, the corresponding monomial $\mathbf{x}%
^{\operatorname*{ircont}\left(  T\right)  }$ \textquotedblleft
counts\textquotedblright\ this entry only once. It is fairly clear that the
highest-degree homogeneous component of $g_{\lambda/\mu}$ is $s_{\lambda/\mu}$
(the component of degree $\left\vert \lambda\right\vert -\left\vert
\mu\right\vert $). Therefore, $g_{\lambda/\mu}$ can be regarded as an
inhomogeneous deformation of the Schur function $s_{\lambda/\mu}$.

Lam and Pylyavskyy, in \cite[\S 9.1]{LamPyl}, have shown the following fact:

\begin{proposition}
\label{prop.g.symm}We have $g_{\lambda/\mu}\in\Lambda$ for every skew
partition $\lambda/\mu$.
\end{proposition}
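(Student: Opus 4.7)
The plan is to mimic the classical proof of Proposition \ref{prop.schur.symm} using Bender-Knuth involutions, but adapted to rpps and the $\ircont$ statistic. Since the ring of symmetric functions is the invariants of $\Z[[x_1,x_2,\ldots]]$ under all finite permutations, and the symmetric group is generated by the adjacent transpositions, it suffices to exhibit, for each $i \geq 1$, a bijection $B_i \colon \operatorname{RPP}(\lambda/\mu) \to \operatorname{RPP}(\lambda/\mu)$ such that $\ircont(B_i(T))$ is obtained from $\ircont(T)$ by swapping the $i$-th and $(i+1)$-st entries. (Being an involution is not strictly needed, but will fall out of the construction.)

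First I would observe that $B_i$ should fix every entry of $T$ that is not equal to $i$ or $i+1$, so the problem reduces to understanding the sub-diagram $D_i(T) \subseteq \lambda/\mu$ of cells whose entries lie in $\{i, i+1\}$. The rpp condition forces that within each column of $D_i(T)$ the $i$'s sit (weakly) above the $(i+1)$'s. I would then classify each column of $\lambda/\mu$ meeting $D_i(T)$ as \emph{pure-$i$} (contains only $i$'s), \emph{pure-$(i+1)$} (contains only $(i+1)$'s), or \emph{mixed} (contains both). Mixed columns contribute $1$ to each of $\ircont(T)_i$ and $\ircont(T)_{i+1}$, so they are ``neutral'' for our purposes; it is the pure columns whose types we must exchange.

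Next I would try to group the pure columns according to the constraints imposed by the rpp property on their neighbors. Fix a row $r$; restricting attention to the cells of row $r$ that lie in pure columns, we see a weakly increasing subword $i\,i\,\cdots\,i\,(i{+}1)\,(i{+}1)\,\cdots\,(i{+}1)$. Some of these columns are ``locked'' by the presence of neighboring mixed columns or of locking entries elsewhere in the column, while the remaining ``free'' pure columns can be toggled. On the free columns one applies the classical Bender-Knuth swap: if a row has $a$ free pure-$i$ columns followed by $b$ free pure-$(i+1)$ columns, replace them with $b$ free pure-$i$ columns followed by $a$ free pure-$(i+1)$ columns. Making this precise, and then verifying that the resulting map is an involution, sends rpps to rpps, and has the advertised effect on $\ircont$, is the bulk of the argument.

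The main obstacle I anticipate is the interaction between rows. Unlike for semistandard tableaux, where Bender-Knuth can be done row by row independently, here a pure column typically spans several rows, so toggling its type in one row forces the same toggle in every other row it meets. Consequently the notion of ``free'' must be defined in a column-coherent way (e.g.\ a pure column is free only if no row through it is blocked), and one must check that the row-wise swaps are mutually consistent and preserve weak monotonicity in both directions. This is precisely the subtlety that seems to motivate the refined statistic $\ceq$ introduced in Section \ref{sect.def}: tracking cells equal to their southern neighbor records exactly which column-repetitions are ``locked'', and the more delicate involution built in Sections \ref{sect.construction}--\ref{sect.proof} gives a uniform and provably correct way of resolving these constraints.
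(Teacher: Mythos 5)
Your overall strategy --- reducing to the entries $i$ and $i+1$, classifying the surviving columns as pure or mixed, and noting that mixed columns contribute equally to both relevant entries of $\ircont$ --- is exactly the paper's starting point (Theorem \ref{thm.BK} and Lemma \ref{lem.BK}). But the mechanism you sketch for the actual swap has a genuine gap, and it is not merely the row-consistency issue you flag at the end. Your plan treats the mixed columns as inert and proposes only to toggle (some of) the pure columns; this cannot work, because the correct involution must in general \emph{move} the mixed columns. Take $\lambda/\mu=(2,2)/\varnothing$ and $T=\begin{smallmatrix}1&1\\1&2\end{smallmatrix}$, so column $1$ is $1$-pure, column $2$ is mixed, and $\ircont(T)=(2,1)$. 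The unique $12$-rpp of this shape with $\ircont=(1,2)$ is $\begin{smallmatrix}1&2\\2&2\end{smallmatrix}$, in which column $1$ has become mixed and column $2$ has become $2$-pure. Under your scheme, either column $1$ is ``free'', in which case toggling it yields the non-rpp $\begin{smallmatrix}2&1\\2&2\end{smallmatrix}$, or it is ``locked'' by the adjacent mixed column, in which case nothing changes and $\ircont$ is not swapped. Either way the map fails on this two-by-two example.

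The paper's resolution is different in kind: it flips \emph{every} pure column (Definition \ref{defi.flip}), accepts that the result may violate weak increase along rows, and then repairs the violations by local descent-resolution moves that shift mixedness between adjacent columns while preserving $\seplist$ (hence $\ceq$) and $\ircont$. The real work --- entirely absent from your sketch, and constituting what you correctly call ``the bulk of the argument'' --- is showing that this rewriting process terminates (via the statistic $\ell$ of Definition \ref{defi.fourtypes}) and is confluent (Lemma \ref{prop.descent-resolution-hyps}), so that the end result is a well-defined $12$-rpp independent of the order of moves. Section \ref{sect.structure} offers a second route: a structure theorem showing that within each ``irreducible component'' all $1$-pure columns lie to the left of all $2$-pure ones and the rpp is determined by how many there are of each. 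That theorem is what ultimately vindicates your ``swap $a$ and $b$'' intuition, but at the level of whole irreducible components rather than rows, and only after the migration of mixed columns has been accounted for.
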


They prove this proposition using generalized plactic algebras \cite[Lemma
3.1]{FomGre} (and also give a second, combinatorial proof for the case
$\mu=\varnothing$ by explicitly expanding $g_{\lambda/\varnothing}$ as a sum
of Schur functions).

In the next section, we shall introduce a refinement of these $g_{\lambda/\mu
}$, and later we will reprove Proposition \ref{prop.g.symm} in a
bijective 
and elementary way.

\section{\label{sect.def}Refined dual stable Grothendieck polynomials}

\subsection{Definition}

 Let $\t=\left(t_{1},t_{2},t_{3},\ldots\right)$ be a sequence of further indeterminates. For any weak composition $\alpha$, we define $\t^\alpha$ to be the monomial $t_1^{\alpha_1} t_2^{\alpha_2} t_3^{\alpha_3} \cdots$.

 If $T$ is a filling of a skew partition $\lm$,
then a \textit{redundant cell} of $T$ is a cell of $\lm$ whose entry is equal to the entry directly below it. That is, a cell $\left(  i,j\right)  $
of $\lm$ is redundant if $\left(  i+1,j\right)  $ is also a cell of $\lm$ and 
$T\left(  i,j\right)  =T\left(  i+1,j\right)  $. Notice that a semistandard
tableau is the same thing as an rpp which has no redundant
cells.

 If $T$ is a filling of $\lm$,
then we define the \textit{column equalities vector} (or,
by way of abbreviation, the \textit{ceq statistic})
of $T$ to be the weak composition
$\operatorname{ceq}\left(  T\right)=\left(c_1,c_2,c_3,\dots\right)$
where $c_i$ is the number of $j\in\mathbb{N}_{+}$ such that $\left(  i,j\right)$ is a redundant cell of $T$. Visually speaking, $\left(  \operatorname{ceq}\left(  T\right)  \right)
_{i}$ is the number of columns of $T$ whose entry in the $i$-th row equals
their entry in the $\left(  i+1\right)  $-th row. For instance, for fillings $T_a,$ $T_b,$ $T_c$ from Figure \ref{fig:fillings} we have $\ceq(T_a)=(0,1),\ \ceq(T_b)=(1),$ and $\ceq(T_c)=()$, where we again drop trailing zeroes.

Notice that $\left|\ceq(T)\right|$ is the number of redundant cells in $T$, so we have 
\begin{equation}\label{eq:sum.of.ceq.and.ircont}
 \left|\ceq(T)\right|+\left|\ircont(T)\right|=\left|\lm\right|
\end{equation}
 for all rpps $T$ of shape $\lm$.

Let now $\lambda/\mu$ be a skew partition. We set%
\[
\widetilde{g}_{\lambda/\mu}(\mathbf{x};\t)=\sum_{\substack{T\text{ is an rpp}\\\text{of shape
}\lm  }}\mathbf{t}^{\operatorname*{ceq}\left(
T\right)  }\mathbf{x}^{\operatorname*{ircont}\left(  T\right)  }
\in \Z\left[t_1, t_2, t_3, \ldots\right]\left[\left[x_1, x_2, x_3, \ldots\right]\right] .
\]

Let us give some examples of $\widetilde{g}_{\lambda/\mu}$.

\begin{example}
\label{exa.gtilde.1}

\begin{enumerate}

\item[\textbf{(a)}] If $\lm$ is a single row with $n$ cells, then for each rpp $T$ of shape $\lm$ we have $\ceq(T)=(0,0,\dots)$ and $\ircont(T)=\cont(T)$ (in fact, any rpp of shape $\lm$ is a semistandard tableau in this case). Therefore we get 
\[
\g_\lm(\x;\t)=h_n(\x)=\sum_{a_1\leq a_2\leq\dots\leq a_n} x_{a_1}x_{a_2}\cdots x_{a_n}.
\]
 Here $h_n(\x)$ is the $n$-th complete homogeneous symmetric function.

\item[\textbf{(b)}] If $\lm$ is a single column with $n$ cells, then, by (\ref{eq:sum.of.ceq.and.ircont}), for all rpps $T$ of shape $\lm$ we have $|\ceq(T)|+|\ircont(T)|=n$, so in this case
\[
\g_\lm(\x;\t)=\sum_{k=0}^{n}e_{k}\left(  t_{1},t_{2},\ldots,t_{n-1}\right)
e_{n-k}\left(  x_{1},x_{2},\ldots\right) =e_n(t_1,t_2,\dots,t_{n-1},x_1,x_2,\dots),
\]
where $e_{i}\left(  \xi_{1},\xi_{2},\xi_{3},\ldots\right)  $ denotes the
$i$-th elementary symmetric function in the indeterminates $\xi_{1},\xi
_{2},\xi_{3},\ldots$.

\end{enumerate}

\end{example}

The power series $\widetilde{g}_{\lambda/\mu}$ generalize the power series
$g_{\lambda/\mu}$ and $s_{\lambda/\mu}$ studied before. The following proposition is clear:

\begin{proposition}
\label{prop.gtilde.gener}Let $\lambda/\mu$ be a skew partition.

\begin{enumerate}
\item[\textbf{(a)}] Specifying $\t=\left(
1,1,1,\ldots\right)  $ yields $\widetilde{g}_{\lambda/\mu}(\x;\t)=g_{\lambda/\mu}(\x)$.

\item[\textbf{(b)}] Specifying $\t =\left(
0,0,0,\ldots\right)  $ yields $\widetilde{g}_{\lambda/\mu}(\x;\t)=s_{\lambda/\mu}(\x)$.

\end{enumerate}
\end{proposition}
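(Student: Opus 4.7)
The proposition is essentially a matter of unwinding definitions, so my plan is to substitute the specified values of $\t$ directly into the definition of $\widetilde{g}_{\lambda/\mu}$ and compare the result term-by-term with the definitions of $g_{\lambda/\mu}$ and $s_{\lambda/\mu}$.

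For part \textbf{(a)}, when every $t_i$ is set to $1$, the monomial $\t^{\ceq(T)} = 1^{c_1} 1^{c_2} \cdots$ equals $1$ for every reverse plane partition $T$ of shape $\lambda/\mu$ (this is well-defined because $\ceq(T)$ is a weak composition, so only finitely many of the $c_i$ are nonzero). Therefore the defining sum collapses to $\sum_{T} \x^{\ircont(T)}$, which is exactly $g_{\lambda/\mu}(\x)$.

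For part \textbf{(b)}, when every $t_i$ is set to $0$, the monomial $\t^{\ceq(T)}$ equals $1$ if $\ceq(T) = (0,0,0,\ldots)$ (using the convention $0^0 = 1$) and $0$ otherwise. By the observation made just before the definition of $\ceq$ (namely, that an rpp is a semistandard tableau precisely when it has no redundant cells, equivalently when $|\ceq(T)| = 0$), the surviving terms are exactly those indexed by semistandard tableaux of shape $\lambda/\mu$. I would then invoke the remark noted earlier that $\ircont(T) = \cont(T)$ when $T$ is a semistandard tableau, so the sum reduces to $\sum_{T \text{ SSYT}} \x^{\cont(T)} = s_{\lambda/\mu}(\x)$.

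No real obstacle is expected; the only mildly delicate point is to justify the substitution of infinitely many $t_i$'s, which is unproblematic because $\ceq(T)$ has finite support for every rpp $T$, so each term of the defining sum involves only finitely many of the $t_i$. The proof will be a short paragraph at most.
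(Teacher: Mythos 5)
Your proof is correct and matches what the paper intends: the paper declares the proposition ``clear'' and leaves it with a \verb|\qed|, relying on exactly the two facts you cite (an rpp with no redundant cells is the same as a semistandard tableau, and $\operatorname{ircont}=\operatorname{cont}$ on semistandard tableaux). Nothing to add.
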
 \qed

\subsection{The symmetry statement}

Our main result is now the following:

\begin{theorem}
\label{thm.gtilde.symm}Let $\lambda/\mu$ be a skew partition. Then
$\widetilde{g}_{\lambda/\mu}(\x;\t)$ is symmetric in $\x$.
\end{theorem}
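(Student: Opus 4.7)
The plan is to reduce the symmetry of $\g_\lm(\x;\t)$ to the existence, for each $i \geq 1$, of an involution
$B_i \colon \operatorname{RPP}(\lm) \to \operatorname{RPP}(\lm)$
with the two properties
\begin{align*}
\ceq(B_i(T)) &= \ceq(T), \\
\ircont(B_i(T)) &= s_i \cdot \ircont(T),
\end{align*}
where $s_i$ denotes the transposition swapping the $i$-th and $(i+1)$-th entries. Granting such involutions, pairing each $T$ with $B_i(T)$ in the defining sum shows that $\g_\lm(\x;\t)$ is invariant under $x_i \leftrightarrow x_{i+1}$; since the adjacent transpositions generate all finite permutations of the $x_j$'s, the theorem follows. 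This mirrors the classical proof of symmetry of $s_\lm$ via Bender-Knuth involutions (Proposition~\ref{prop.schur.symm}).

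To construct $B_i$, I would first demand that $B_i(T)$ agree with $T$ on every cell whose entry lies outside $\{i, i+1\}$. This localizes the problem to the sub-diagram $S_i(T) \subseteq \lm$ of cells with entry $i$ or $i+1$. Inside each column of $S_i(T)$, the $i$-entries form a top block and the $(i+1)$-entries a bottom block, with a single break point between them (possibly at the top or the bottom, meaning an all-$(i+1)$ or all-$i$ column). The ircont-swapping requirement is then precisely the requirement to interchange the number of columns of $S_i(T)$ containing $i$ with the number containing $i+1$, without disturbing the break point in any mixed column (since a mixed column contributes to both counts equally and its break point position governs the local ceq contribution that must stay fixed).

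The naive per-column flip of all-$i$ columns into all-$(i+1)$ columns does not globally work, because two horizontally adjacent such columns would then both become all-$(i+1)$ or both all-$i$ regardless of order, and the weak row condition at the boundary with surrounding $\{{<}i\}$- and $\{{>}i{+}1\}$-entries can be violated. The fix, as in the classical construction, is a parenthesis-style matching of the ``pure $i$'' and ``pure $i+1$'' columns within each horizontal strip of $S_i(T)$: matched pairs stay put, and only the unmatched columns flip their entry. The definition has to be refined from the SSYT case to handle the fact that a column of an rpp may have many equal entries (each contributing to ceq) and that $S_i(T)$ can be rather intricate as a skew shape in its own right.

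The principal obstacles will be showing, with this construction, that (a) $B_i(T)$ is still an rpp, i.e., the modifications do not break either the row-weak or the column-weak inequalities at the frontier of $S_i(T)$; (b) $\ceq$ is preserved column-by-column, not merely in total --- this is the genuinely new difficulty beyond the semistandard case, and it pins down which flips are legal; and (c) the procedure squares to the identity. I expect (a) and (b) to hinge on a careful local analysis of the behaviour of the matching near each column boundary, while (c) should fall out once the matching is defined in a manner symmetric between $i$ and $i{+}1$. The detailed construction and verifications are postponed to Sections~\ref{sect.construction} and~\ref{sect.proof}, which is exactly the program the introduction advertises.
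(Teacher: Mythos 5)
Your reduction of the symmetry to the existence, for each $i$, of a ceq-preserving, ircont-transposing involution $\mathbf{B}_i$, and the further localization to the cells with entries in $\{i,i+1\}$, is exactly the paper's route (Theorem~\ref{thm.BK} and Lemma~\ref{lem.BK}). The gap is in the proposed construction of the involution itself. You suggest a parenthesis-style matching in which matched pure columns ``stay put'', mixed columns are left untouched, and only the unmatched pure columns flip. This cannot work, and the obstruction is not among the three you list: the \emph{mixed columns themselves must move}. Take $\lambda/\mu=(3,3,3)/(1)$ and the 12-rpp whose columns are, from left to right, $1$-pure, $1$-pure, and mixed with its single $2$ in the bottom cell. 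Its ircont is $(3,1)$, so its image must have ircont $(1,3)$; by the structure results of Section~\ref{sect.structure} (Theorem~\ref{thm:12rpps} with Lemma~\ref{lemma:leftRight}), the only 12-rpp of this shape with the same ceq and with ircont $(1,3)$ has columns: mixed (with the $2$ in its bottom cell), $2$-pure, $2$-pure --- the mixed column has migrated from the third position to the first. In your scheme the two $1$-pure columns are unmatched (there are no $2$-pure columns to pair them with), so they flip while the mixed column stays in place, producing a $2$ to the left of a $1$ in each of the top two rows: not an rpp. Since Corollary~\ref{cor:uniqueBK} shows the correct map is the \emph{unique} one with the required properties, no refinement of a matching rule that keeps mixed columns fixed can succeed.

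This is precisely why the paper abandons any ``flip only some columns'' rule: it flips \emph{all} pure columns (Definition~\ref{defi.flip}), accepts that the result is merely a benign 12-table with descents, and then repairs it by a rewriting process (descent-resolution) whose steps shuffle mixed columns sideways while preserving ceq, ircont and seplist. The real content is then a confluence argument (Lemma~\ref{prop.descent-resolution-hyps} and Proposition~\ref{prop.BK.norm}, via the strictly decreasing statistic $\ell$) showing that the outcome is independent of the order of resolutions, together with the compatibility of resolution with $\operatorname{flip}$ (Lemma~\ref{lem.descent-resolution-props}~\textbf{(c)}) to obtain the involution property. If you want a direct, matching-free description instead, Section~\ref{sect.structure} provides one: within each irreducible component a 12-rpp is determined by its number of $1$-pure columns, and $\mathbf{B}$ sends the one with $m$ such columns to the one with $n-m$; but this is a global statement per component, not a column-local matching.
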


Here, ``symmetric in $\x$'' means ``invariant under all finite
permutations of the indeterminates $x_1, x_2, x_3, \ldots$''
(while $t_1, t_2, t_3, \ldots$ remain unchanged).

Clearly, Theorem~\ref{thm.gtilde.symm} implies the symmetry of
$g_\lm$ and $s_\lm$ due to Proposition~\ref{prop.gtilde.gener}.

We shall prove Theorem \ref{thm.gtilde.symm} bijectively. The core of our
proof will be the following restatement of Theorem \ref{thm.gtilde.symm}:

\begin{theorem}
\label{thm.BK}Let $\lambda/\mu$ be a skew partition and let $i\in\mathbb{N}_{+}$. Then, there exists an
involution $\mathbf{B}_{i}:\operatorname{RPP}\left(  \lambda/\mu\right)
\rightarrow\operatorname{RPP}\left(  \lambda/\mu\right)  $ which preserves the ceq statistics and acts on the ircont statistic by the transposition of its $i$-th and $i+1$-th entries.
\end{theorem}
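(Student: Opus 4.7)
I would build $\mathbf{B}_i$ explicitly by letting it modify only the cells of $T$ whose entries are $i$ or $i+1$ (the \emph{active} cells), keeping all other entries fixed. This is natural since the transposition of the $i$-th and $(i+1)$-th entries of $\ircont$ should not affect the contribution of columns containing no $i$ or $i+1$. By the rpp condition, in every column $c$ the active cells form a contiguous vertical segment consisting of $i$'s on top and $(i+1)$'s below; the restricted filling is thus determined column by column by a single \emph{transition row} $\sigma_c$, the row of the first $(i+1)$ (or one below the last active cell if there is no $(i+1)$ in $c$).

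I would next decompose the statistics column by column. The contribution of column $c$ to $\ircont(T)_i$ (resp.\ $\ircont(T)_{i+1}$) is the indicator that $c$ contains an $i$ (resp.\ $i+1$), equivalently that $\sigma_c$ lies strictly below the top (resp.\ not above the bottom) of $c$'s active segment. The contribution of $c$ to $\ceq(T)_r$ is $1$ iff both $(r,c)$ and $(r+1,c)$ are active with the same value, which happens iff $r$ and $r+1$ are both in $c$'s active range and $\sigma_c \neq r+1$. Therefore preserving $\ceq$ is equivalent to preserving, for each row $s$, the number of columns $c$ whose transition $\sigma_c = s$ lies strictly inside the active range of $c$ --- the multiset of \emph{internal} transitions. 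Transitions at the top or bottom of the active range of $c$ (which occur exactly for ``pure'' columns of a single value) are invisible to $\ceq$.

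Accordingly, $\mathbf{B}_i$ should be a rearrangement of the transition data $(\sigma_c)_c$ that simultaneously (i) preserves the multiset of internal transitions and (ii) transposes the two column counts $\ircont(T)_i$ and $\ircont(T)_{i+1}$, subject to the row-monotonicity constraints of the rpp (which translate into inequalities among the $\sigma_c$'s of adjacent columns on their common active rows). I would carry out the rearrangement on each connected component of the active region independently. For each component I expect a canonical valid choice, obtained by a row-by-row procedure that refines the classical Bender-Knuth involution; specializing to semistandard tableaux --- where every column has active segment of length $\leq 2$ and there are no internal transitions to preserve --- the procedure should collapse to the usual row-wise count-swap, providing a sanity check.

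The main obstacle will be to make this rearrangement simultaneously well-defined (landing in $\operatorname{RPP}(\lambda/\mu)$), involutive, and internal-transition-preserving. The difficulty is genuinely global: altering one $\sigma_c$ may force compensating changes in neighboring columns to maintain row-monotonicity, and one must verify that the resulting cascade of boundary shifts preserves the row counts of internal transitions exactly. Once this is accomplished, checking that $\mathbf{B}_i^2 = \id$ and that $\ircont$ is transposed as required should follow directly from the design. I expect this is the technical content of Sections \ref{sect.construction} and \ref{sect.proof}.
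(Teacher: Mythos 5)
Your reduction is correct and coincides with the paper's own first step: restricting attention to the cells with entries $i$ and $i+1$ is exactly the passage to 12-rpps of a smaller skew shape (Lemma \ref{lem.BK}), your transition row $\sigma_c$ is the paper's statistic $\operatorname{sep}_c$, and your observation that preserving $\ceq$ amounts to preserving, for each row, the number of columns whose transition is internal is precisely why the paper's moves are designed to preserve $\operatorname{seplist}$. Up to that point the proposal is sound.

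The gap is that the theorem's entire content lies in the step you explicitly defer. You never define the rearrangement of the transition data $(\sigma_c)_c$; you only list the properties it must have and acknowledge that achieving them simultaneously is ``the main obstacle.'' That obstacle \emph{is} the proof. The paper's solution is a concrete two-stage procedure: first apply $\operatorname{flip}$, which turns every $1$-pure column into a $2$-pure one and vice versa (this already swaps $(\ircont(T))_1$ with $(\ircont(T))_2$ and preserves $\ceq$ and the internal transitions, but generally breaks row-monotonicity); then repeatedly apply local descent-resolution moves on adjacent columns until no descent remains. Making this work requires two nontrivial lemmas with no counterpart in your sketch: termination and confluence of the resolution process (the decreasing statistic $\ell$ of Definition \ref{defi.fourtypes} together with the diamond-type Lemma \ref{prop.descent-resolution-hyps}), which is what makes the outcome independent of the order of resolutions and hence well-defined; and the compatibility $\operatorname{res}_k(\operatorname{flip}(\operatorname{res}_k P)) = \operatorname{flip}(P)$ (Proposition \ref{prop.descent-resolution-props} \textbf{(e)}), which is what makes $\mathbf{B}$ an involution. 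Your hoped-for ``row-by-row count-swap'' does not obviously supply these: changing one $\sigma_c$ to fix a row forces compensating changes in neighbouring columns across \emph{other} rows, and controlling that cascade is exactly what the descent-resolution calculus does. (Your instinct that the answer is canonical is vindicated a posteriori in Section \ref{sect.structure}, where uniqueness of such an involution on irreducible components is proved --- but that, too, requires the structure theorem, not just the setup.) As it stands, the proposal sets up the right bookkeeping but does not construct the involution.
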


This involution $\mathbf{B}_{i}$ is a generalization of the $i$-th
Bender-Knuth involution defined for semistandard tableaux (see, e.g.,
\cite[proof of Proposition 2.11]{GriRei15}), but its definition is more
complicated than that of the latter.\footnote{We will compare our involution
$\mathbf{B}_{i}$ with the $i$-th Bender-Knuth involution in Section
\ref{sect.BKclassical}.} Defining it and proving its properties will take a
significant part of this paper.

\begin{proof}[Proof of Theorem~\ref{thm.gtilde.symm} using
Theorem~\ref{thm.BK}.]
We need to prove that $\widetilde{g}_{\lambda/\mu}(\x;\t)$ is invariant
under all finite permutations of the indeterminates
$x_1, x_2, x_3, \ldots$. The group of such permutations is generated by
$s_1, s_2, s_3, \ldots$, where for each $i \in \Nplus$, we define $s_i$
as the permutation of $\Nplus$ which transposes $i$ with $i+1$ and
leaves all other positive integers unchanged. Hence, it suffices to
show that $\widetilde{g}_{\lambda/\mu}(\x;\t)$ is invariant under each
of the permutations $s_1, s_2, s_3, \ldots$. In other words, it suffices
to show that $s_i \cdot \widetilde{g}_{\lambda/\mu}(\x;\t)
= \widetilde{g}_{\lambda/\mu}(\x;\t)$ for each $i \in \Nplus$.

So fix $i \in \Nplus$. In order to prove
$s_i \cdot \widetilde{g}_{\lambda/\mu}(\x;\t)
= \widetilde{g}_{\lambda/\mu}(\x;\t)$, it suffices to find a bijection
$\mathbf{B}_{i}:\operatorname{RPP}\left(  \lambda/\mu\right)
\rightarrow\operatorname{RPP}\left(  \lambda/\mu\right)  $ with the
property that every $T \in \operatorname{RPP}\left(  \lambda/\mu\right)$
satisfies $\ceq\left(\mathbf{B}_i\left(T\right)\right) = \ceq \left(T\right)$ and
$\ircont\left(\mathbf{B}_i\left(T\right)\right) = s_i \cdot \ircont \left(T\right)$.
Theorem~\ref{thm.BK} yields precisely such a bijection (even an
involution).
\end{proof}

\subsection{Reduction to 12-rpps}

Fix a skew partition $\lm$. We shall make one further simplification before we step to the actual proof of
Theorem \ref{thm.BK}. We define a \textit{12-rpp} to be an rpp whose entries all belong to the set $\left\{  1,2\right\}  $. We let $\OneTwoRPP$ be the set of all 12-rpps of shape $\lm$.

\begin{lemma}
\label{lem.BK} There exists an
involution $\mathbf{B}:\OneTwoRPP\rightarrow\OneTwoRPP$
which preserves the ceq statistic and switches the number of columns containing a $1$ with the number of columns containing a $2$ (that is, switches the first two entries of the ircont statistic).
\end{lemma}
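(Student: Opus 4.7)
The plan is to identify $\OneTwoRPP$ with a lattice of partitions and construct $\B$ on the latter. A 12-rpp $T$ of shape $\lm$ is completely determined by the set of its $1$-cells, which necessarily forms a skew shape $\nu/\mu$ for a unique partition $\nu$ with $\mu\subseteq\nu\subseteq\lambda$, and conversely every such $\nu$ yields a valid 12-rpp. This gives a bijection $\OneTwoRPP \leftrightarrow [\mu,\lambda] := \{\nu : \mu\subseteq\nu\subseteq\lambda\}$, and in terms of conjugate partitions the statistics rewrite as
\[
(\ceq T)_i = \#\{j : \mu'_j < i < \lambda'_j \text{ and } \nu'_j \neq i\},\quad
(\ircont T)_1 = \#\{j : \mu'_j<\nu'_j\},\quad
(\ircont T)_2 = \#\{j : \nu'_j<\lambda'_j\}.
\]
So it is enough to construct an involution $\B$ on $[\mu,\lambda]$ preserving every $(\ceq T)_i$ and swapping the last two counts.

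Classify each column $j$ with $\mu'_j<\lambda'_j$ as \emph{pure-$1$} if $\nu'_j=\lambda'_j$, \emph{pure-$2$} if $\nu'_j=\mu'_j$, and \emph{mixed} otherwise. Then $(\ircont T)_1=\#\text{pure-}1+\#\text{mixed}$ and $(\ircont T)_2=\#\text{pure-}2+\#\text{mixed}$, so swapping $r_1\leftrightarrow r_2$ amounts to exchanging the counts of pure-$1$ and pure-$2$ columns. Since $\#\{j:\mu'_j<i<\lambda'_j\}$ depends only on the shape, preserving every $(\ceq T)_i$ is equivalent to preserving, for each row $i$, the number of $j$ with $\mu'_j<i<\lambda'_j$ and $\nu'_j=i$; equivalently, $\B$ must preserve the multiset $M(\nu):=\{\nu'_j:\mu'_j<\nu'_j<\lambda'_j\}$ of \emph{interior values}. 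Grouping $[\mu,\lambda]$ into equivalence classes under $\nu\sim\nu^*\Longleftrightarrow M(\nu)=M(\nu^*)$, the involution $\B$ must permute each class while exchanging pure-$1$ and pure-$2$ counts; in particular it may move \emph{which} column realizes each transition, although the multiset of transition rows is preserved.

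Finally, I would argue that within each equivalence class and for each pair $(a,b)$ there is \emph{at most one} $\nu$ with $\#\text{pure-}1=a$ and $\#\text{pure-}2=b$, because the partition inequality $\nu'_j\geq\nu'_{j+1}$ together with the bounds $\mu'_j\leq\nu'_j\leq\lambda'_j$ forces $\nu$ to put its pure-$1$ columns at the $a$ leftmost available slots, its pure-$2$ columns at the $b$ rightmost available slots, and the interior values of $M(\nu)$ in the remaining middle slots in weakly decreasing order -- this is the only arrangement compatible with weak monotonicity of $(\nu'_j)$ sandwiched between $(\mu'_j)$ and $(\lambda'_j)$. Given this uniqueness, I would define $\B(\nu)$ to be the unique partition in the same equivalence class with $(a,b)$ replaced by $(b,a)$, when such a partition exists, and $\B(\nu)=\nu$ otherwise; the involution property and the symmetry of statistics are then immediate from the definition. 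The main obstacle will be to prove the uniqueness claim rigorously and, more subtly, to show that the existence condition is symmetric under $(a,b)\leftrightarrow(b,a)$ -- i.e.~if a partition with counts $(a,b)$ exists in a class, then so does one with counts $(b,a)$ -- since without this, $\B$ would fail to be a well-defined involution. I expect both statements to follow from a careful case analysis of how the monotone bounds $(\mu'_j)$, $(\lambda'_j)$ constrain which columns can host pure-$1$, pure-$2$, or a given interior value, and to require handling ``rigid'' configurations (such as $\lm=(2,2)/\emptyset$ with $\nu'=(2,0)$) where the shape forces $\B$ to act as the identity because no swap is possible.
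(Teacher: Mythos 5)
Your reduction is sound and close in spirit to the paper's second proof (Section~\ref{sect.structure}): identifying a 12-rpp with the partition $\nu$ formed by its $1$-cells together with $\mu$, and observing that preserving $\operatorname{ceq}$ is equivalent to preserving the multiset $M(\nu)$ of interior values of the mixed columns (this multiset is essentially the paper's $\operatorname{seplist}$), is all correct. The fatal step is the uniqueness claim: it is \emph{false} that each equivalence class contains at most one $\nu$ with given pure-column counts $(a,b)$. Take $\lambda=(7,7,7,4,4)$, $\mu=(5,3,2)$ and the class $M=\{4,3,3,2\}$ (Figure~\ref{fig:seplist} of the paper). Columns $3$ and $4$ are forced to be mixed, and the remaining freedom splits into two independent blocks: columns $\{1,2\}$ admit exactly the fillings ``column~$1$ mixed, column~$2$ $2$-pure'' and ``column~$1$ $1$-pure, column~$2$ mixed'', while columns $\{5,6,7\}$ admit three fillings, with $(2,0)$, $(1,1)$, $(0,2)$ pure-$1$/pure-$2$ columns respectively. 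Combining the $2$-pure choice in the first block with the $(2,0)$ choice in the second, versus the $1$-pure choice in the first block with the $(1,1)$ choice in the second, gives two distinct $\nu$ in the same class, both with counts $(a,b)=(2,1)$. The same example refutes the structural assertion on which your uniqueness argument rests (pure-$1$ columns at the leftmost available slots, pure-$2$ at the rightmost): in $T_1$ of Figure~\ref{fig:seplist} the unique $1$-pure column, column~$5$, lies to the right of the $2$-pure column~$2$.

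Consequently the map $\B$ as you define it is not well-defined, and the real content of the lemma is exactly what your sketch skips: one must decompose each class into \emph{irreducible components} (maximal blocks not separated by forced mixed columns), prove uniqueness of the filling for given pure-counts \emph{within each component} (this is Theorem~\ref{thm:12rpps}; your leftmost/rightmost picture becomes correct there, cf.\ Lemma~\ref{lemma:leftRight}), and then observe that the class generating function is a product of palindromic polynomials $P_{n_k}(x_1,x_2)$, whose symmetry yields an involution defined componentwise. Note also that the issue you flag as the ``main obstacle'' --- that existence of a $\nu$ with counts $(a,b)$ implies existence of one with $(b,a)$ --- is not enough even once established: since classes can contain several elements per $(a,b)$, the \emph{multiplicities} must agree, and that is precisely what the product formula of Corollary~\ref{cor:thm12rppsCompositeCor} delivers.
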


 This Lemma implies Theorem \ref{thm.BK}: for any $i\in\Nplus$ and for $T$ an rpp of shape $\lm$, we construct $\B_i(T)$ as follows: 
 \begin{itemize}
  \item Ignore all entries of $T$ not equal to $i$ or $i+1$. 
  \item Replace all occurrences of $i$ by $1$ and all occurrences of $i+1$ by $2$. We get a 12-rpp $T'$ of some smaller shape (which is still a skew partition\footnote{Fine print: It has the form $\lm$ for some skew partition $\lm$, but this skew partition $\lm$ is not always uniquely determined (e.g., $\left(3,1,1\right)/\left(2,1\right)$ and $\left(3,2,1\right)/\left(2,2\right)$ have the same Young diagram). But the involution $\mathbf{B}$ constructed in the proof of Lemma~\ref{lem.BK} depends only on the Young diagram of $\lm$, and thus the choice of $\lm$ does not matter.}).
  \item Replace $T'$ by $\B(T')$.
  \item In $\B(T')$, replace back all occurrences of $1$ by $i$ and all occurrences of $2$ by $i+1$.
  \item Finally, restore the remaining entries of $T$ that were ignored on the first step.
 \end{itemize}
 
 It is clear that this operation acts on $\ircont(T)$ by a transposition of the $i$-th and $i+1$-th entries. The fact that it does not change $\ceq(T)$ is also not hard to show: the set of redundant cells remains the same.

\def\B{{\mathbf{B}}}

\section{Construction of $\mathbf{B}$\label{sect.construction}}

In this section we are going to sketch the definition of $\mathbf{B}$ and state some of its properties. We postpone the proofs until the next section.

For the whole Sections \ref{sect.construction} and \ref{sect.proof},
we shall be working in the situation of Lemma \ref{lem.BK}. In
particular, we fix a skew partition $\lm$.

A \textit{12-table} means a filling $T:\lm\rightarrow\left\{  1,2\right\}  $
of $\lm$
such that the entries of $T$ are weakly increasing down columns. (We do not
require them to be weakly increasing along rows.) Every column of a 12-table
is a sequence of the form $(1,1,\ldots,1,2,2,\ldots,2)$. We say that such a sequence is

\begin{itemize}
\item \textit{1-pure} if it is nonempty and consists purely of $1$'s,

\item \textit{2-pure} if it is nonempty and consists purely of $2$'s,

\item \textit{mixed} if it contains both $1$'s and $2$'s.
\end{itemize}

\def\flip{{\operatorname*{flip}}}

\begin{definition}
 \label{defi.flip}
For a 12-table $T$, we define $\flip(T)$ to be the 12-table obtained from $T$ by changing each column of $T$ as follows:

\begin{itemize}
\item \textbf{If} this column is 1-pure, we replace all its entries by $2$'s
(so that it becomes 2-pure).

\textbf{Otherwise}, if this column is 2-pure, we replace all its entries by
$1$'s (so that it becomes 1-pure).

\textbf{Otherwise} (i.e., if this column is mixed or empty), we do not change it.
\end{itemize}

\end{definition}

If $T$ is a 12-rpp then $\flip(T)$ need not be a 12-rpp, because it can contain a $2$ to the left of a $1$ in some row. We say that a positive integer $k$ is a \textit{descent} of a 12-table $P$ if there is a $2$ in the column $k$ and there is a $1$ to the right of it in the column $k+1$. We will encounter three possible kinds of descents depending on the types of columns $k$ and $k+1$:

\begin{itemize}
\item[(M1)] The $k$-th column of $P$ is mixed and the $\left(  k+1\right)  $-th column of $P$ is 1-pure.

\item[(2M)] The $k$-th column of $P$ is 2-pure and the $\left(  k+1\right)  $-th column of $P$ is mixed.

\item[(21)] The $k$-th column of $P$ is 2-pure and the $\left(  k+1\right)  $-th column of $P$ is 1-pure.
\end{itemize}

For an arbitrary 12-table it can happen also that two mixed columns form a descent, but such a descent will never arise in our process. 

For each of the three types of descents, we will define what it means to \textit{resolve} this descent. This is an operation which transforms the 12-table $P$ by changing the entries in its $k$-th and $\left(k+1\right)$-th columns. These changes can be informally explained by Figure \ref{fig:des-res-preliminary}:

\begin{figure}[here]
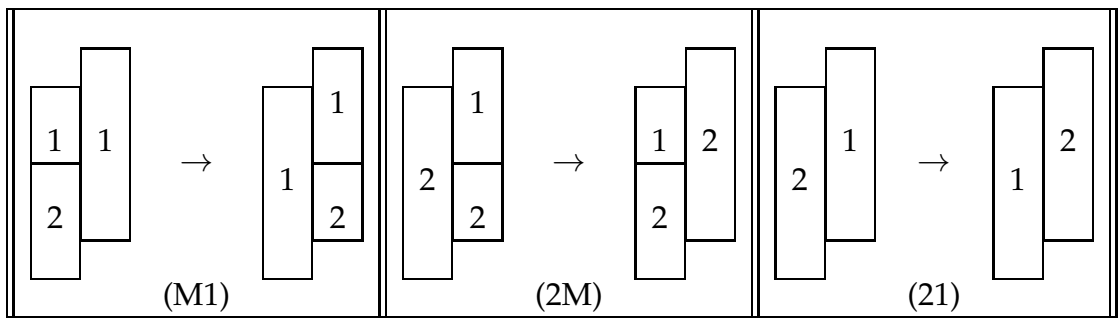

 
\begin{tabular}{||ccc||ccc||ccc||}\hline
 & & & & & & & & \\
\begin{tabular}{cc}
\cline{2-2} & \multicolumn{1}{|c|}{}\\
\cline{1-1} \multicolumn{1}{|c|}{} & \multicolumn{1}{|c|}{} \\
\multicolumn{1}{|c|}{1} & \multicolumn{1}{|c|}{1}\\
\cline{1-1} \multicolumn{1}{|c|}{} & \multicolumn{1}{|c|}{}\\
\multicolumn{1}{|c|}{2} & \multicolumn{1}{|c|}{}\\
\cline{2-2} \multicolumn{1}{|c|}{} \\
\cline{1-1}
\end{tabular}
&
$\rightarrow$
&
\begin{tabular}{cc}
\cline{2-2} & \multicolumn{1}{|c|}{}\\
\cline{1-1} \multicolumn{1}{|c|}{} & \multicolumn{1}{|c|}{1} \\
\multicolumn{1}{|c|}{} & \multicolumn{1}{|c|}{}\\
\cline{2-2} \multicolumn{1}{|c|}{1} & \multicolumn{1}{|c|}{}\\
\multicolumn{1}{|c|}{} & \multicolumn{1}{|c|}{2}\\
\cline{2-2} \multicolumn{1}{|c|}{} \\
\cline{1-1}
\end{tabular}
&
\begin{tabular}{cc}
\cline{2-2} & \multicolumn{1}{|c|}{}\\
\cline{1-1} \multicolumn{1}{|c|}{} & \multicolumn{1}{|c|}{1} \\
\multicolumn{1}{|c|}{} & \multicolumn{1}{|c|}{}\\
\cline{2-2} \multicolumn{1}{|c|}{2} & \multicolumn{1}{|c|}{}\\
\multicolumn{1}{|c|}{} & \multicolumn{1}{|c|}{2}\\
\cline{2-2} \multicolumn{1}{|c|}{} \\
\cline{1-1}
\end{tabular}
&
$\rightarrow$
&
\begin{tabular}{cc}
\cline{2-2} & \multicolumn{1}{|c|}{}\\
\cline{1-1} \multicolumn{1}{|c|}{} & \multicolumn{1}{|c|}{} \\
\multicolumn{1}{|c|}{1} & \multicolumn{1}{|c|}{2}\\
\cline{1-1} \multicolumn{1}{|c|}{} & \multicolumn{1}{|c|}{}\\
\multicolumn{1}{|c|}{2} & \multicolumn{1}{|c|}{}\\
\cline{2-2} \multicolumn{1}{|c|}{} \\
\cline{1-1}
\end{tabular}
&
\begin{tabular}{cc}
\cline{2-2} & \multicolumn{1}{|c|}{}\\
\cline{1-1} \multicolumn{1}{|c|}{} & \multicolumn{1}{|c|}{} \\
\multicolumn{1}{|c|}{} & \multicolumn{1}{|c|}{1}\\
\multicolumn{1}{|c|}{2} & \multicolumn{1}{|c|}{}\\
\multicolumn{1}{|c|}{} & \multicolumn{1}{|c|}{}\\
\cline{2-2} \multicolumn{1}{|c|}{} \\
\cline{1-1}
\end{tabular}
&
$\rightarrow$
&
\begin{tabular}{cc}
\cline{2-2} & \multicolumn{1}{|c|}{}\\
\cline{1-1} \multicolumn{1}{|c|}{} & \multicolumn{1}{|c|}{} \\
\multicolumn{1}{|c|}{} & \multicolumn{1}{|c|}{2}\\
\multicolumn{1}{|c|}{1} & \multicolumn{1}{|c|}{}\\
\multicolumn{1}{|c|}{} & \multicolumn{1}{|c|}{}\\
\cline{2-2} \multicolumn{1}{|c|}{} \\
\cline{1-1}
\end{tabular}\\
& (M1) & & & (2M) & & & (21) & \\\hline
\end{tabular}

\caption{The three descent-resolution steps \label{fig:des-res-preliminary}}
\end{figure}

\def\resk{{\operatorname{res}_{k}}}

For example, if $k$ is a descent of type (M1) in a 12-table $P$, then we define the 12-table $\resk P$ as follows: the $k$-th column of $\resk P$ is 1-pure; the $\left(  k+1\right)  $-th column of $\resk P$ is mixed and the highest $2$ in it is in the same row as the highest $2$ in the $k$-th column of $P$; all other columns of $\resk P$ are
copied over from $P$ unchanged. The definitions of $\resk P$ for the other two types of descents are similar (and will be elaborated upon in Subsection~\ref{subsect.resolving}). We say that $\resk P$ is obtained from $P$ by \textit{resolving} the descent $k$, and we say that passing from $P$ to $\resk P$ constitutes a \textit{descent-resolution step}. (Of course, a 12-table $P$ can have several descents and thus offer several ways to proceed by descent-resolution steps.)

Now the map $\B$ is defined as follows: take any 12-rpp $T$ and apply flip to it to get a 12-table $\flip(T)$. Next, apply descent-resolution steps to $\flip(T)$ in arbitrary order until we get a 12-table with no descents left. Put $\B(T):=P$. (A rigorous statement of this is Definition \ref{defi.B}.)

In the next section we will see that $\B(T)$ is well-defined (that is, the process terminates after a finite number of descent-resolution steps, and the result does not depend on the order of steps). We will also see that $\B$ is an involution $\OneTwoRPP\to\OneTwoRPP$ that satisfies the claims of Lemma \ref{lem.BK}. An alternative proof of all these facts can be found in Section \ref{sect.structure}.


\section{\label{sect.proof}Proof of Lemma \ref{lem.BK}}

We shall now prove Lemma \ref{lem.BK} in detail.

Recall that every column of a 12-table
is a sequence of the form $(1,1,\ldots,1,2,2,\ldots,2)$. If $s$ is a sequence of the form $(1,1,\ldots,1,2,2,\ldots,2)$, then we define the
\textit{signature} $\operatorname*{sig}\left(  s\right)
$ of $s$ to be
\[
\operatorname*{sig}\left(  s\right) = \left\{
\begin{array}
[c]{l}%
0,\text{ if }s\text{ is 2-pure or empty;}\\
1,\text{ if }s\text{ is mixed;}\\
2,\text{ if }s\text{ is 1-pure}%
\end{array}
\right.
.
\]
\begin{definition}
 \label{defi.fourtypes}
For any 12-table $T$, we define a nonnegative integer $\ell\left(
T\right)  $ by%
\[
\ell\left(  T\right)  =\sum_{h\in\mathbb{N}_{+}}h\cdot\operatorname*{sig}%
\left(  \text{the }h\text{-th column of }T\right)  .
\]
\end{definition}

For instance, if $T$ is the 12-table
\begin{equation}
%
\ytableausetup{notabloids}
\begin{ytableau}
\none& \none& 1 & 2 & 1 & 2 \\
\none& 1 & 1 & 2 \\
2 & 1 & 1 & 2 \\
2 & 2
\end{ytableau}%
\label{eq.example-12-table.1}
\end{equation}
then $\ell\left(  T\right)  =1\cdot0+2\cdot1+3\cdot2+4\cdot0+5\cdot
2+6\cdot0+7\cdot0+8\cdot0+\cdots=18$.

\subsection{\label{subsection:benign}Descents, separators, and benign 12-tables}

In Subsection~\ref{sect.construction}, we have defined a ``descent''
of a 12-table. Let us reword this definition in more formal terms:
If $T$ is a 12-table, then we define a \textit{descent} of $T$ to be a
positive integer $i$ such that there exists an $r\in\mathbb{N}_{+}$ satisfying
$\left(  r,i\right)  \in \lm$, $\left(  r,i+1\right)  \in \lm$, $T\left(
r,i\right)  =2$ and $T\left(  r,i+1\right)  =1$. For instance,
the descents of the 12-table shown in (\ref{eq.example-12-table.1})
are $1$ and $4$. Clearly, a 12-rpp of shape $\lm$ is the same as a 12-table which has no
descents.

If $T$ is a 12-table, and if $k\in\mathbb{N}_{+}$ is such that the $k$-th
column of $T$ is mixed, then we define $\operatorname*{sep}\nolimits_{k}T$ to
be the smallest $r\in\mathbb{N}_{+}$ such that $\left(  r,k\right)  \in \lm$ and
$T\left(  r,k\right)  =2$. Thus, every 12-table $T$, every
$r\in\mathbb{N}_{+}$ and every $k\in\mathbb{N}_{+}$ such that the $k$-th
column of $T$ is mixed and such that $\left(  r,k\right)  \in \lm$ satisfy%
\begin{equation}
T\left(  r,k\right)  =\left\{
\begin{array}
[c]{l}%
1,\ \ \ \ \ \ \ \ \ \ \text{if }r<\operatorname*{sep}\nolimits_{k}T;\\
2,\ \ \ \ \ \ \ \ \ \ \text{if }r\geq\operatorname*{sep}\nolimits_{k}T.
\end{array}
\right.  \label{pf.lem.BK.Tsep}%
\end{equation}

If $T$ is a 12-table, then we let $\operatorname*{seplist}T$ denote the list
of all values $\operatorname*{sep}\nolimits_{k}T$ (in the order of increasing
$k$), where $k$ ranges over all positive integers for which the $k$-th column
of $T$ is mixed. For instance, if $T$ is
\[
%
\ytableausetup{notabloids}
\begin{ytableau}
\none& \none& 1 & 1 & 1 \\
\none& 2 & 1 & 1 & 2 \\
1 & 2 & 1 \\
2 & 2 & 2
\end{ytableau}%
\]
then $\operatorname*{sep}\nolimits_{1}T=4$, $\operatorname*{sep}\nolimits_{3}T=4$, and
$\operatorname*{sep}\nolimits_{5}T=2$ (and there are no other $k$ for which $\operatorname*{sep}\nolimits_{k}T$ is defined), so that $\operatorname*{seplist}T=\left(  4,4,2\right)  $.

We say that a 12-table $T$ is \textit{benign} if the list
$\operatorname*{seplist}T$ is weakly decreasing.\footnote{For
example, the 12-table in (\ref{eq.example-12-table.1}) is benign,
but replacing its third column by $\left(1,2,2\right)$ and its
fourth column by $\left(1,1,2\right)$ would yield a 12-table
which is not benign.}
Notice that 12-rpps are benign 12-tables, but the converse is not true. If $T$ is a benign 12-table, then%
\begin{align}
&  \text{there exists no descent }k\text{ of }T\text{ such that both the
}k\text{-th column of }T\nonumber\\
&  \text{and the }\left(  k+1\right)  \text{-th column of }T\text{ are mixed.}
\label{eq.benign.not-both-mixed}%
\end{align}
Let $\BenignTables$ denote the set of all benign 12-tables; we have $\OneTwoRPP\subseteq\BenignTables$.

Recall the map $\flip$ defined for 12-tables in Definition \ref{defi.flip}. If $T\in\BenignTables$ then $\flip(T)\in\BenignTables$ as well because $T$ and $\flip(T)$ have the same mixed columns. Thus, the map $\flip$ restricts to a map $\BenignTables\to\BenignTables$ which we will also denote $\flip$. 

\begin{remark}
 \label{pf.lem.BK.flip.ircont}
It is clear that $\operatorname*{flip}$ is an involution on $\BenignTables$ that preserves $\operatorname*{ceq}$ and $\operatorname*{seplist}$ but switches the first two entries of $\operatorname*{ircont}$ (that is, if some $T \in \BenignTables$ has $\ircont\left(T\right) = \left(a,b,0,0,0,\ldots\right)$, then
$\operatorname{ircont}\left(\operatorname{flip}\left(T\right)\right) = \left(b,a,0,0,0,\ldots\right)$).
\end{remark}

\subsection{Plan of the proof}

Let us now briefly sketch the ideas behind the rest of the proof before we go
into them in detail. The map $\operatorname*{flip}:\BenignTables\rightarrow
\BenignTables$ does not generally send 12-rpps to 12-rpps (i.e., it does not
restrict to a map $\OneTwoRPP\rightarrow\OneTwoRPP$). However, we shall amend
this by defining a way to transform any benign 12-table into a 12-rpp by what
we call \textquotedblleft resolving descents\textquotedblright. The process of
\textquotedblleft resolving descents\textquotedblright\ will be a stepwise
process, and will be formalized in terms of a binary relation $\Rrightarrow$
on the set $\BenignTables$ which we will soon introduce. The intuition behind
saying \textquotedblleft$P\Rrightarrow Q$\textquotedblright\ is that the
benign 12-table $P$ has a descent, resolving which yields the benign 12-table
$Q$. Starting with a benign 12-table $P$, we can repeatedly resolve descents
until this is no longer possible. We have some freedom in
performing this process, because at any step there can be a choice of several
descents to resolve; but we will see that the final result does not depend on
the process. Hence, the final result can be regarded as a function of $P$. We
will denote it by $\operatorname*{norm}P$, and we will see that it is a
12-rpp. We will then define a map $\mathbf{B}:\OneTwoRPP\rightarrow\OneTwoRPP$
by $\mathbf{B}\left(  T\right)  =\operatorname*{norm}\left(
\operatorname*{flip}T\right)  $, and show that it is an involution satisfying
the properties that we want it to satisfy.

\subsection{\label{subsect.resolving}Resolving descents}

Now we come to the details.

Let $k\in\mathbb{N}_{+}$. Let $P\in\BenignTables$. Assume (for the whole Subsection \ref{subsect.resolving}) that $k$
is a descent of $P$. Thus, the $k$-th column of $P$ must contain at least one
$2$. Hence, the $k$-th column of $P$ is either mixed or 2-pure. Similarly, the
$\left(  k+1\right)  $-th column of $P$ is either mixed or 1-pure. But the
$k$-th and the $\left(  k+1\right)  $-th columns of $P$ cannot both be mixed
(by (\ref{eq.benign.not-both-mixed}), because $P$ is benign). Thus,
exactly one of the following three statements holds:

\begin{itemize}
\item[(M1)] The $k$-th column of $P$ is mixed and the $\left(  k+1\right)  $-th column of $P$ is 1-pure.

\item[(2M)] The $k$-th column of $P$ is 2-pure and the $\left(  k+1\right)  $-th column of $P$ is mixed.

\item[(21)] The $k$-th column of $P$ is 2-pure and the $\left(  k+1\right)  $-th column of $P$ is 1-pure.
\end{itemize}

Now, we define a new 12-table $\operatorname*{res}\nolimits_{k}P$ as follows (see Figure \ref{fig:des-res-preliminary} for illustration):

\begin{itemize}
\item If we have (M1), then $\operatorname*{res}_{k}P$ is the
12-table defined as follows:
The $k$-th column of $\operatorname*{res}_{k}P$ is 1-pure; the $\left(  k+1\right)  $-th column of $\operatorname*{res}%
\nolimits_{k}P$ is mixed and satisfies $\operatorname*{sep}\nolimits_{k+1}%
\left(  \operatorname*{res}\nolimits_{k}P\right)  =\operatorname*{sep}%
\nolimits_{k}P$; all other columns of $\operatorname*{res}\nolimits_{k}P$ are
copied over from $P$ unchanged.\footnote{The reader should check that
this 12-table is well-defined.
}

\item If we have (2M), then $\operatorname*{res}_{k}P$ is the
12-table defined as follows: The $k$-th column of $\operatorname*{res}_{k}P$
is mixed and satisfies $\operatorname*{sep}\nolimits_{k}\left(
\operatorname*{res}\nolimits_{k}P\right)  =\operatorname*{sep}\nolimits_{k+1}%
P$; the $\left(  k+1\right)  $-th column of $\operatorname*{res}%
\nolimits_{k}P$ is 2-pure; all other columns
of $\operatorname*{res}\nolimits_{k}P$ are copied over from $P$
unchanged.

\item If we have (21), then $\operatorname*{res}_{k}P$ is the
12-table defined as follows: The $k$-th column of $\operatorname*{res}_{k}P$
is 1-pure; the $\left(  k+1\right)  $-th column of $\operatorname*{res}%
\nolimits_{k}P$ is 2-pure; all other columns of $\operatorname*{res}%
\nolimits_{k}P$ are copied over from $P$ unchanged.
\end{itemize}

In either case, $\operatorname*{res}\nolimits_{k}P$ is a well-defined
12-table. It is furthermore clear that $\operatorname*{seplist}\left(
\operatorname*{res}\nolimits_{k}P\right)  =\operatorname*{seplist}P$. Thus,
$\operatorname*{res}\nolimits_{k}P$ is benign (since $P$ is benign); that is,
$\operatorname*{res}\nolimits_{k}P\in\BenignTables$. We say that
$\operatorname*{res}\nolimits_{k}P$ is the 12-table obtained by
\textit{resolving} the descent $k$ in $P$.

\Needspace{14\baselineskip}

\begin{example}
\label{exa.resolve.short}Let $P$ be the 12-table on the left:
\begin{center}
\begin{tabular}{||c||c||c||c||}\hline
 & & &  \\
 $\ytableausetup{notabloids}
\begin{ytableau}
\none& \none& 1 & 2 & 1 \\
\none& 1 & 1 & 2 \\
2 & 1 & 1 \\
2 & 2 & 1 \\
2
\end{ytableau}$ & 
 $\ytableausetup{notabloids}
\begin{ytableau}
\none& \none& 1 & 2 & 1 \\
\none& 2 & 1 & 2 \\
1 & 2 & 1 \\
2 & 2 & 1 \\
2
\end{ytableau}$ &
$\ytableausetup{notabloids}
\begin{ytableau}
\none& \none& 1 & 2 & 1 \\
\none& 1 & 1 & 2 \\
2 & 1 & 1 \\
2 & 1 & 2 \\
2
\end{ytableau}$
& 
$\ytableausetup{notabloids}
\begin{ytableau}
\none& \none& 1 & 1 & 2 \\
\none& 1 & 1 & 1 \\
2 & 1 & 1 \\
2 & 2 & 1 \\
2
\end{ytableau}$
\\
$P$ & $\operatorname*{res}\nolimits_{1}P$  & $\operatorname*{res}\nolimits_{2}P$  & $\operatorname*{res}\nolimits_{4}P$   \\\hline
\end{tabular}
\end{center}
Then $P$ is a benign 12-table, and its
descents are $1$, $2$ and $4$. We have $\operatorname*{sep}\nolimits_{2}P=4$.

If we set $k=1$ then we have (2M), if we set $k=2$ then we have (M1), and if we set $k=4$ then we have (21). We can resolve each of these three descents; the results are the three 12-tables on the right.

We notice that each of the three 12-tables $\operatorname*{res}\nolimits_{1}%
P$, $\operatorname*{res}\nolimits_{2}P$ and $\operatorname*{res}%
\nolimits_{4}P$ still has descents. In order to get a 12-rpp from
$P$, we will have to keep resolving these descents until none remain.
\end{example}


We now observe some further properties of $\operatorname*{res}\nolimits_{k}P$:

\begin{proposition}
\label{prop.descent-resolution-props}Let $P\in\BenignTables$ and $k\in
\mathbb{N}_{+}$ be such that $k$ is a descent of $P$.

\begin{enumerate}
\renewcommand{\theenumi}{\alph{enumi}}
\renewcommand{\labelenumi}{\textbf{(\theenumi)}}

\item \label{pf.lem.BK.res.loc}
The 12-table $\operatorname*{res}\nolimits_{k}P$ differs from $P$
only in columns $k$ and $k+1$.

\item \label{pf.lem.BK.res.loc2}
The $k$-th and the $\left(  k+1\right)  $-th columns of
$\operatorname*{res}\nolimits_{k}P$ depend only on the $k$-th and the $\left(
k+1\right)  $-th columns of $P$.

\item \label{pf.lem.BK.res.ceq}
We have%
\[
\operatorname*{ceq}\left(  \operatorname*{res}\nolimits_{k}P\right)
=\operatorname*{ceq}\left(  P\right) .
\]

\item \label{pf.lem.BK.res.irconts}
We have%
\[
\operatorname*{ircont}\left(  \operatorname*{res}\nolimits_{k}P\right)
=\operatorname*{ircont}\left(  P\right)  .
\]


\item \label{pf.lem.BK.res.flip}
The integer $k$ is a descent of $\operatorname*{flip}\left(  \operatorname*{res}%
\nolimits_{k}P\right)$, and we have
\[
\operatorname*{res}%
\nolimits_{k}\left(  \operatorname*{flip}\left(  \operatorname*{res}%
\nolimits_{k}P\right)  \right) = \operatorname*{flip}\left(  P\right).
\]

\item \label{pf.lem.BK.res.lendec}
Recall that we defined a nonnegative integer $\ell\left(
T\right)  $ for every 12-table $T$ in Definition \ref{defi.fourtypes}. We
have
\[
\ell\left(  P\right)  >\ell\left(  \operatorname*{res}\nolimits_{k}P\right)  .
\]

\end{enumerate}

\end{proposition}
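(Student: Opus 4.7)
The plan is to treat parts (a) and (b) as immediate consequences of the definition of $\resk P$, and to handle parts (c)--(f) by case analysis on which of (M1), (2M), (21) applies. In all three cases the change is confined to columns $k$ and $k+1$, so each remaining property reduces to a local computation. Part (a) is literally built into the construction (``all other columns of $\resk P$ are copied over from $P$ unchanged''), and part (b) is equally immediate: the rule that produces the new $k$-th and $(k+1)$-th columns only refers to the types of the two original columns and to one separator $\operatorname{sep}_k P$ or $\operatorname{sep}_{k+1} P$, all of which are data of those two columns.

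For parts (c) and (d) I would, in each of the three cases, compare the $\ceq$ and $\ircont$ contributions of columns $k$ and $k+1$ before and after resolution. For $\ceq$ in case (M1): column $k$ is mixed with separator $s = \operatorname{sep}_k P$, so it contributes a redundant pair in every applicable row except row $s-1$, while column $k+1$ is $1$-pure and contributes in every applicable row; after resolution these roles swap and the per-row total is unchanged. Cases (2M) and (21) are analogous (in (21) both columns are pure before and after, so every applicable row contributes on both sides). For $\ircont$ the multiset of entries occurring in the union of columns $k$ and $k+1$ is preserved in each case, so the counts $\ircont_1$ and $\ircont_2$ are preserved; together with part (a) this gives the full vector identities.

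Part (e) is the main obstacle. Because $\flip$ swaps $1$-pure with $2$-pure and fixes mixed columns, the column types of $\flip(\resk P)$ at positions $k$ and $k+1$ realize the ``reverse'' of the original case: (M1) for $P$ becomes (2M) for $\flip(\resk P)$, (2M) becomes (M1), and (21) stays (21), with separator values carried along. The nontrivial point is to verify that $k$ is still a descent of $\flip(\resk P)$; this requires exhibiting a row $r$ with $\left(r,k\right), \left(r,k+1\right)\in\lm$ and the required pair of entries. In the (M1)$\to$(2M) transition the candidate row is $s-1$ where $s=\operatorname{sep}_k P$, and its validity follows from the existence of $\left(s-1,k\right)\in\lm$ (since column $k$ of $P$ is mixed) together with the partition inequalities $\lambda_{s-1}\ge\lambda_r\ge k+1$ and $\mu_{s-1}<k$ provided by the original descent. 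The other transitions are handled similarly or are immediate. Once $k$ is shown to be a descent of $\flip(\resk P)$, resolving it applies the reverse rule with the preserved separator and reproduces exactly columns $k$ and $k+1$ of $\flip(P)$; by part (a) the identity $\resk\bigl(\flip(\resk P)\bigr)=\flip(P)$ follows.

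Part (f) is a one-line calculation. Only the signatures of columns $k$ and $k+1$ change, so
\[
\ell(\resk P)-\ell(P)=k\,\Delta_k+(k+1)\,\Delta_{k+1},
\]
where $\Delta_j$ is the change in signature of column $j$. In case (M1) these evolve as $1\to 2$ and $2\to 1$, yielding $k-(k+1)=-1$; in case (2M) they evolve as $0\to 1$ and $1\to 0$, again giving $-1$; in case (21) they evolve as $0\to 2$ and $2\to 0$, giving $2k-2(k+1)=-2$. In every case $\ell$ strictly decreases, which proves (f).
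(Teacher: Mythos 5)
Your proposal is correct and matches the paper's intent exactly: the paper's own proof of this proposition consists of the single remark that all six parts follow from straightforward arguments using the definitions of $\resk$ and $\operatorname{flip}$, and the case analysis over (M1), (2M), (21) that you carry out is precisely that argument. One correction to part (d): your stated justification --- that the multiset of entries occurring in the union of columns $k$ and $k+1$ is preserved --- is false in general (in case (M1), for instance, the number of cells containing a $2$ changes whenever columns $k$ and $k+1$ have different bottom rows); what is actually preserved is the multiset of \emph{column types} ($\{\text{mixed},\text{1-pure}\}$ becomes $\{\text{1-pure},\text{mixed}\}$, etc.), and since $\operatorname{ircont}$ counts columns containing a given entry rather than cells, this already yields (d). Note also that your row-$(s-1)$ bookkeeping in part (c) silently uses the fact that $(s-1,k+1)$ and $(s,k+1)$ lie in $\lm$; this is the same well-definedness check the paper relegates to a footnote, and the partition inequalities you invoke in part (e) are exactly what justify it.
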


\begin{proof}
[Proof of Proposition \ref{prop.descent-resolution-props}.]
All parts of Proposition \ref{prop.descent-resolution-props} follow from straightforward arguments
using the definitions of $\operatorname*{res}_{k}$ and $\operatorname*{flip}$
and (\ref{pf.lem.BK.Tsep}).
\end{proof}

\subsection{The descent-resolution relation $\Rrightarrow$}

\begin{definition}
Let us now define a binary relation $\Rrightarrow$ on the set $\BenignTables$ as
follows: Let $P\in\BenignTables$ and $Q\in\BenignTables$. If $k\in\mathbb{N}_{+}$,
then we write $P\underset{k}{\Rrightarrow}Q$ if $k$ is a descent
of $P$ and we have $Q=\operatorname*{res}\nolimits_{k}P$. We write $P\Rrightarrow Q$ if there
exists a $k\in\mathbb{N}_{+}$ such that $P\underset{k}{\Rrightarrow}Q$.
\end{definition}

Proposition \ref{prop.descent-resolution-props} translates into the following properties of this relation
$\Rrightarrow$:

\begin{lemma}
\label{lem.descent-resolution-props}Let $P\in\BenignTables$ and $Q\in\BenignTables$
be such that $P\Rrightarrow Q$. Then:

\begin{enumerate}
\item[\textbf{(a)}] We have $\operatorname*{ceq}\left(  Q\right)
=\operatorname*{ceq}\left(  P\right)  $.

\item[\textbf{(b)}] We have $\operatorname*{ircont}\left(  Q\right)
=\operatorname*{ircont}\left(  P\right)  $.

\item[\textbf{(c)}] The benign 12-tables $\operatorname*{flip}\left(  P\right)  $ and
$\operatorname*{flip}\left(  Q\right)  $ have the property that
$\operatorname*{flip}\left(  Q\right)  \Rrightarrow\operatorname*{flip}\left(
P\right)  $.

\item[\textbf{(d)}] We have $\ell\left(  P\right)  >\ell\left(  Q\right)  $.

\end{enumerate}
\end{lemma}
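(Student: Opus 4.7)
The plan is to derive Lemma~\ref{lem.descent-resolution-props} directly from Proposition~\ref{prop.descent-resolution-props} by unpacking the definition of $\Rrightarrow$. Since $P \Rrightarrow Q$, by definition there exists some $k \in \Nplus$ such that $P \underset{k}{\Rrightarrow} Q$, meaning $k$ is a descent of $P$ and $Q = \res_k P$. I would fix this $k$ at the outset of the proof, so that all four parts reduce to applying the appropriate clauses of Proposition~\ref{prop.descent-resolution-props} to this $k$.

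For part \textbf{(a)}, I would cite Proposition~\ref{prop.descent-resolution-props}\textbf{(\ref{pf.lem.BK.res.ceq})}, which directly gives $\ceq(\res_k P) = \ceq(P)$, i.e., $\ceq(Q) = \ceq(P)$. For part \textbf{(b)}, similarly invoke Proposition~\ref{prop.descent-resolution-props}\textbf{(\ref{pf.lem.BK.res.irconts})} to conclude $\ircont(Q) = \ircont(P)$. For part \textbf{(d)}, apply Proposition~\ref{prop.descent-resolution-props}\textbf{(\ref{pf.lem.BK.res.lendec})} to get $\ell(P) > \ell(\res_k P) = \ell(Q)$.

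Part \textbf{(c)} is the only one that requires a tiny bit of manipulation rather than just a citation. Proposition~\ref{prop.descent-resolution-props}\textbf{(\ref{pf.lem.BK.res.flip})} states that $k$ is a descent of $\flip(\res_k P)$ and that $\res_k(\flip(\res_k P)) = \flip(P)$. Since $Q = \res_k P$, rewriting this gives that $k$ is a descent of $\flip(Q)$ and $\res_k(\flip(Q)) = \flip(P)$. By the definition of the relation $\underset{k}{\Rrightarrow}$, this is exactly $\flip(Q) \underset{k}{\Rrightarrow} \flip(P)$, which in turn yields $\flip(Q) \Rrightarrow \flip(P)$ as desired.

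There is essentially no obstacle here; the lemma is a bookkeeping translation of the already-stated Proposition~\ref{prop.descent-resolution-props} from statements about $\res_k P$ into statements about the relation $\Rrightarrow$. The only mild subtlety is remembering that $Q$ is a benign 12-table (so that $\flip(Q)$ is defined on $\BenignTables$), which is guaranteed because $\res_k P \in \BenignTables$ was established during the construction of $\res_k$.
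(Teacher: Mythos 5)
Your proposal is correct and matches the paper's approach exactly: the paper presents this lemma as a direct translation of Proposition~\ref{prop.descent-resolution-props} into the language of the relation $\Rrightarrow$ (fixing $k$ with $P\underset{k}{\Rrightarrow}Q$ and citing parts \textbf{(c)}, \textbf{(d)}, \textbf{(e)}, \textbf{(f)} of that proposition), which is precisely what you do, including the small rewriting step needed for part \textbf{(c)}.
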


We now define $\overset{\ast}{\Rrightarrow}$ to be the
reflexive-and-transitive closure of the relation $\Rrightarrow$.
\footnote{Explicitly, this means that $\overset{\ast}{\Rrightarrow}$
is defined as follows: For two elements $P\in\BenignTables$ and $Q\in\BenignTables$,
we have $P\overset{\ast}{\Rrightarrow}Q$ if and only if there exists a
sequence $\left(  a_{0},a_{1},\ldots,a_{n}\right)  $ of elements of
$\BenignTables$ such that $a_{0}=P$ and $a_{n}=Q$ and such that every
$i\in\left\{  0,1,\ldots,n-1\right\}  $ satisfies $a_{i}\Rrightarrow a_{i+1}%
$. (Note that $n$ is allowed to be $0$.)}
This relation $\overset{\ast}{\Rrightarrow}$ is reflexive and transitive,
and extends the relation $\Rrightarrow$.
Lemma \ref{lem.descent-resolution-props} thus yields:

\begin{lemma}
\label{lem.descent-resolution-*props}Let $P\in\BenignTables$ and $Q\in\BenignTables$
be such that $P\overset{\ast}{\Rrightarrow}Q$. Then:

\begin{enumerate}

\item[\textbf{(a)}] We have $\operatorname*{ceq}\left(  Q\right)
=\operatorname*{ceq}\left(  P\right)  $.

\item[\textbf{(b)}] We have $\operatorname*{ircont}\left(  Q\right)
=\operatorname*{ircont}\left(  P\right)  $.

\item[\textbf{(c)}] The benign 12-tables $\operatorname*{flip}\left(  P\right)  $ and
$\operatorname*{flip}\left(  Q\right)  $ have the property that
$\operatorname*{flip}\left(  Q\right)  \overset{\ast}{\Rrightarrow
}\operatorname*{flip}\left(  P\right)  $.

\item[\textbf{(d)}] We have $\ell\left(  P\right)  \geq\ell\left(  Q\right)  $.

\end{enumerate}
\end{lemma}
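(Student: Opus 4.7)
The plan is to prove Lemma~\ref{lem.descent-resolution-*props} by a straightforward induction on the length of a chain witnessing $P\overset{\ast}{\Rrightarrow}Q$. By definition of the reflexive-transitive closure, there exists a sequence $\left(a_0, a_1, \ldots, a_n\right)$ of elements of $\BenignTables$ with $a_0 = P$, $a_n = Q$, and $a_i \Rrightarrow a_{i+1}$ for every $i \in \left\{0, 1, \ldots, n-1\right\}$. I will induct on $n$.

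For the base case $n = 0$, we have $P = Q$, so parts (a) and (b) are immediate, part (d) holds with equality, and part (c) follows from reflexivity of $\overset{\ast}{\Rrightarrow}$ (applied to $\flip(P) = \flip(Q)$). For the inductive step, assume the statement holds for chains of length $n-1$ and consider a chain $P = a_0 \Rrightarrow a_1 \Rrightarrow \cdots \Rrightarrow a_n = Q$. Apply the induction hypothesis to the subchain $a_0 \Rrightarrow \cdots \Rrightarrow a_{n-1}$, which gives $P \overset{\ast}{\Rrightarrow} a_{n-1}$ with the four conclusions, and then combine with the single step $a_{n-1} \Rrightarrow a_n = Q$ via Lemma~\ref{lem.descent-resolution-props}. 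For (a) and (b) this is just transitivity of equality, and for (d) it is transitivity of $\geq$ using $\ell(a_{n-1}) > \ell(a_n)$.

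Part (c) is the slightly subtler one and deserves a comment: from the induction hypothesis we get $\flip(a_{n-1}) \overset{\ast}{\Rrightarrow} \flip(P)$, and from Lemma~\ref{lem.descent-resolution-props}(c) applied to the single step $a_{n-1} \Rrightarrow a_n$ we get $\flip(a_n) \Rrightarrow \flip(a_{n-1})$, i.e.\ $\flip(Q) \Rrightarrow \flip(a_{n-1})$. Concatenating these via transitivity of $\overset{\ast}{\Rrightarrow}$ yields $\flip(Q) \overset{\ast}{\Rrightarrow} \flip(P)$, as required. Intuitively, applying $\flip$ to each entry of the chain and reading it backwards converts a chain from $P$ to $Q$ into a chain from $\flip(Q)$ to $\flip(P)$.

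I do not anticipate any real obstacle: the one-step Lemma~\ref{lem.descent-resolution-props} already packages all the nontrivial content, and the reflexive-transitive closure argument is entirely formal. The only thing to be careful about is the order reversal in part (c), but this is handled cleanly by the induction above.
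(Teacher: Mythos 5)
Your proof is correct and takes the same route the paper does: the paper simply asserts that Lemma~\ref{lem.descent-resolution-props} yields Lemma~\ref{lem.descent-resolution-*props} via the reflexive-and-transitive closure, and your induction on the length of the chain (with the order reversal in part (c) handled by concatenating $\flip(Q)\Rrightarrow\flip(a_{n-1})$ with $\flip(a_{n-1})\overset{\ast}{\Rrightarrow}\flip(P)$) is exactly the argument being left implicit there.
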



We now state the following crucial lemma:

\begin{lemma}
\label{prop.descent-resolution-hyps}Let $A$, $B$ and $C$ be three elements of
$\BenignTables$ satisfying $A\Rrightarrow B$ and $A\Rrightarrow C$. Then, there
exists a $D\in\BenignTables$ such that $B\overset{\ast}{\Rrightarrow}D$ and
$C\overset{\ast}{\Rrightarrow}D$.
\end{lemma}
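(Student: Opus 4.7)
My plan is to perform a case analysis on the pair of descents being resolved. Write $A \underset{k}{\Rrightarrow} B$ and $A \underset{\ell}{\Rrightarrow} C$, and assume without loss of generality that $k \leq \ell$. I will split into three cases: $k = \ell$, $\ell \geq k+2$, and $\ell = k+1$. The first two cases are routine; the third (``adjacent descents'') will be the main obstacle, and its analysis is where the benign hypothesis will do all the work.

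If $k = \ell$, then $B = C$ and we may take $D = B$. If $\ell \geq k + 2$, the changes in $B$ and $C$ occupy disjoint pairs of columns, namely $\{k, k+1\}$ and $\{\ell, \ell+1\}$, by Proposition~\ref{prop.descent-resolution-props}\textbf{(\ref{pf.lem.BK.res.loc})}. Hence $\ell$ is still a descent of $B$ and $k$ is still a descent of $C$. Using Proposition~\ref{prop.descent-resolution-props}\textbf{(\ref{pf.lem.BK.res.loc2})} (column-locality of $\operatorname{res}$), I can check column by column that $\operatorname{res}_\ell B = \operatorname{res}_k C$, and take $D$ to be this common 12-table; here $B \underset{\ell}{\Rrightarrow} D$ and $C \underset{k}{\Rrightarrow} D$ give $B \overset{\ast}{\Rrightarrow} D$ and $C \overset{\ast}{\Rrightarrow} D$.

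The main case is $\ell = k+1$. Since both $k$ and $k+1$ are descents of $A$, the column $k+1$ of $A$ must contain both a $2$ (witnessing descent $k+1$ on its left) and a $1$ (witnessing descent $k$ on its right), so column $k+1$ is mixed. Then (\ref{eq.benign.not-both-mixed}) forces columns $k$ and $k+2$ to be non-mixed; combined with the fact that column $k$ must contain a $2$ and column $k+2$ must contain a $1$, this pins down column $k$ as 2-pure and column $k+2$ as 1-pure. Set $s = \operatorname{sep}_{k+1} A$. I will now follow two sequences of three descent-resolutions each, starting from $B = \operatorname{res}_k A$ and $C = \operatorname{res}_{k+1} A$ respectively, and show they meet. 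Concretely, starting from $B$, descent $k$ was of type (2M), so $B$ has column $k$ mixed with $\operatorname{sep}_k = s$, column $k+1$ 2-pure, column $k+2$ 1-pure; then $k+1$ is a descent of type (21), and after that $k$ is a descent of type (M1). Starting from $C$, descent $k+1$ was of type (M1), and the analogous three steps (types (21) then (2M) at positions $k$ then $k+1$) apply. Both three-step chains produce the same 12-table $D$: column $k$ 1-pure, column $k+1$ mixed with $\operatorname{sep}_{k+1} = s$, column $k+2$ 2-pure, all other columns copied from $A$. This yields $B \overset{\ast}{\Rrightarrow} D$ and $C \overset{\ast}{\Rrightarrow} D$, as required.

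The hard part is verifying the adjacent-descent case: one must keep track not only of the \emph{types} of the columns at each intermediate step but also of the separator values, and check at each step that the claimed descent really is a descent of the claimed type (in particular, that the ``new'' column is of the type we assert, and that the newly created mixed column has the correct separator). The benign property is used twice: first, in the initial three-column structure argument, to force the (2-pure, mixed, 1-pure) configuration; and second, implicitly, to guarantee that $\operatorname{seplist}$ is preserved along each chain so that all intermediate 12-tables remain in $\BenignTables$, ensuring the resolutions are well-defined.
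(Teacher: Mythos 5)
Your proposal is correct and follows essentially the same route as the paper: the same three-way case split on the relative position of the two descents, with the adjacent case resolved by showing that the benign hypothesis forces the (2-pure, mixed, 1-pure) configuration on columns $k$, $k+1$, $k+2$ and then confluning both branches after two further descent-resolutions each to the common table (1-pure, mixed with the same separator $s$, 2-pure). The only quibble is bookkeeping: each chain from $B$ (respectively $C$) to $D$ consists of two resolution steps, not three, and the verification you defer (that each claimed descent exists with the claimed type) is exactly what the paper's ``six-cells property'' supplies.
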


\begin{proof}
[Proof of Lemma \ref{prop.descent-resolution-hyps}.]If $B=C$, then we can
simply choose $D=B=C$; thus, we assume that $B\neq C$.

Let $u$, $v \in\mathbb{N}_{+}$ be such that $A\underset{u}{\Rrightarrow}B$ and $A\underset{v}{\Rrightarrow}C$.
Thus, $B = \operatorname{res}_u A$ and $C = \operatorname{res}_v A$.
Since $B \neq C$, we have $u \neq v$.
Without loss of generality, assume that $u<v$. We are in one of the following two cases:

\textit{Case 1:} We have $u=v-1$.

\textit{Case 2:} We have $u<v-1$.

Let us deal with Case 2 first. In this
case, $\left\{  u,u+1\right\}  \cap\left\{  v,v+1\right\}
=\varnothing$.
It follows that
$\operatorname*{res}\nolimits_{v}\left(  \operatorname*{res}\nolimits_{u}%
A\right)  $ and $\operatorname*{res}\nolimits_{u}%
\left(  \operatorname*{res}\nolimits_{v}A\right)  $ are well-defined and $\operatorname*{res}\nolimits_{u}\left(
\operatorname*{res}\nolimits_{v}A\right)  =\operatorname*{res}\nolimits_{v}%
\left(  \operatorname*{res}\nolimits_{u}A\right) $. Setting
$D=\operatorname*{res}\nolimits_{u}\left(  \operatorname*{res}\nolimits_{v}%
A\right)  =\operatorname*{res}\nolimits_{v}\left(  \operatorname*{res}%
\nolimits_{u}A\right)  $ completes the proof in this case.

Now, let us consider Case 1.
The $v$-th column of $A$ must contain a $1$ (since $v-1 = u$ is a descent of $A$)
and a $2$ (since $v$ is a descent of $A$). Hence, the $v$-th column of $A$ is
mixed. Since $A$ is benign but has $v-1$ and $v$ as descents, it thus follows
that the $\left(  v-1\right)  $-th column of $A$ is
2-pure and the
$\left(  v+1\right)  $-th column of $A$ is 1-pure. We can represent the
relevant portion (that is, the $\left(v-1\right)$-th, $v$-th and
$\left(v+1\right)$-th columns) of the 12-table $A$ as follows:
\begin{equation}
A=%
\begin{tabular}{ccc}
\cline{3-3} & & \multicolumn{1}{|c|}{} \\
\cline{2-2} & \multicolumn{1}{|c|}{} & \multicolumn{1}{|c|}{} \\
\cline{1-1} \multicolumn{1}{|c|}{} & \multicolumn{1}{|c|}{1} & \multicolumn
{1}{|c|}{1} \\
\multicolumn{1}{|c|}{} & \multicolumn{1}{|c|}{} & \multicolumn{1}{|c|}{} \\
\cline{2-2} \multicolumn{1}{|c|}{2} & \multicolumn{1}{|c|}{} & \multicolumn
{1}{|c|}{} \\
\multicolumn{1}{|c|}{} & \multicolumn{1}{|c|}{2} & \multicolumn{1}{|c|}{} \\
\cline{3-3} \multicolumn{1}{|c|}{} & \multicolumn{1}{|c|}{} \\
\cline{2-2} \multicolumn{1}{|c|}{} \\
\cline{1-1}
\end{tabular}%
. \label{pf.prop.descent-resolution.hyps.short.A}%
\end{equation}
Notice that the separating line which separates the $1$'s from the
$2$'s in column $v$ is lower than the upper border of the
$\left(v-1\right)$-th column (since $v-1$ is a descent of $A$), and
higher than the lower border of the $\left(v+1\right)$-th column
(since $v$ is a descent of $A$).

%

Let $s=\operatorname*{sep}\nolimits_{v}A$.
Then, the cells
$\left(s, v-1\right)$, $\left(s, v\right)$, $\left(s, v+1\right)$,
$\left(s+1, v-1\right)$, $\left(s+1, v\right)$, $\left(s+1, v+1\right)$
all belong to $\lm$ (due to what we just said about separating
lines). We shall refer to this observation as the
``six-cells property''.

Now, $B = \operatorname{res}_u A = \operatorname{res}_{v-1}A$, so
$B$ is represented as follows:
\[
B=%
\begin{tabular}{ccc}
\cline{3-3} & & \multicolumn{1}{|c|}{} \\
\cline{2-2} & \multicolumn{1}{|c|}{} & \multicolumn{1}{|c|}{} \\
\cline{1-1} \multicolumn{1}{|c|}{} & \multicolumn{1}{|c|}{} & \multicolumn
{1}{|c|}{1} \\
\multicolumn{1}{|c|}{1} & \multicolumn{1}{|c|}{2} & \multicolumn{1}{|c|}{} \\
\cline{1-1} \multicolumn{1}{|c|}{} & \multicolumn{1}{|c|}{} & \multicolumn
{1}{|c|}{} \\
\multicolumn{1}{|c|}{2} & \multicolumn{1}{|c|}{} & \multicolumn{1}{|c|}{} \\
\cline{3-3} \multicolumn{1}{|c|}{} & \multicolumn{1}{|c|}{} \\
\cline{2-2} \multicolumn{1}{|c|}{} \\
\cline{1-1}
\end{tabular}%
,
\]
where $\operatorname*{sep}\nolimits_{v-1}B = s$
(that is, the separating line in the
$\left(v-1\right)$-th column of $B$
is between the cells $\left(s,v-1\right)$ and
$\left(s+1,v-1\right)$).
Now, $v$ is a descent of $B$.
Resolving this descent yields a
12-table $\operatorname{res}_{v}B$ which is represented as follows:
\[
\operatorname*{res}\nolimits_{v}B=%
\begin{tabular}{ccc}
\cline{3-3} & & \multicolumn{1}{|c|}{} \\
\cline{2-2} & \multicolumn{1}{|c|}{} & \multicolumn{1}{|c|}{} \\
\cline{1-1} \multicolumn{1}{|c|}{} & \multicolumn{1}{|c|}{} & \multicolumn
{1}{|c|}{2} \\
\multicolumn{1}{|c|}{1} & \multicolumn{1}{|c|}{1} & \multicolumn{1}{|c|}{} \\
\cline{1-1} \multicolumn{1}{|c|}{} & \multicolumn{1}{|c|}{} & \multicolumn
{1}{|c|}{} \\
\multicolumn{1}{|c|}{2} & \multicolumn{1}{|c|}{} & \multicolumn{1}{|c|}{} \\
\cline{3-3} \multicolumn{1}{|c|}{} & \multicolumn{1}{|c|}{} \\
\cline{2-2} \multicolumn{1}{|c|}{} \\
\cline{1-1}
\end{tabular}%
.
\]
This, in turn, shows that $v-1$ is a descent of $\operatorname*{res}%
\nolimits_{v}B$ (by the six-cells property). Resolving this descent
yields a 12-table $\operatorname*{res}\nolimits_{v-1}\left(
\operatorname*{res}\nolimits_{v}B\right)  $ which is represented as follows:%
\begin{equation}
\operatorname*{res}\nolimits_{v-1}\left(  \operatorname*{res}\nolimits_{v}%
B\right)  =%
\begin{tabular}{ccc}
\cline{3-3} & & \multicolumn{1}{|c|}{} \\
\cline{2-2} & \multicolumn{1}{|c|}{} & \multicolumn{1}{|c|}{} \\
\cline{1-1} \multicolumn{1}{|c|}{} & \multicolumn{1}{|c|}{1} & \multicolumn
{1}{|c|}{2} \\
\multicolumn{1}{|c|}{} & \multicolumn{1}{|c|}{} & \multicolumn{1}{|c|}{} \\
\cline{2-2} \multicolumn{1}{|c|}{1} & \multicolumn{1}{|c|}{} & \multicolumn
{1}{|c|}{} \\
\multicolumn{1}{|c|}{} & \multicolumn{1}{|c|}{2} & \multicolumn{1}{|c|}{} \\
\cline{3-3} \multicolumn{1}{|c|}{} & \multicolumn{1}{|c|}{} \\
\cline{2-2} \multicolumn{1}{|c|}{} \\
\cline{1-1}
\end{tabular}%
, \label{pf.prop.descent-resolution.hyps.short.D}%
\end{equation}
where $\operatorname*{sep}\nolimits_{v} \left( \operatorname*{res}\nolimits_{v-1}\left(  \operatorname*{res}\nolimits_{v}%
B\right) \right) = s$.

On the other hand, $C = \operatorname*{res}\nolimits_{v}A$. We can apply a similar argument as above to show that the 12-table $\operatorname*{res}\nolimits_{v}\left(
\operatorname*{res}\nolimits_{v-1}C\right)  $ is well-defined, and is exactly equal to the 12-table in (\ref{pf.prop.descent-resolution.hyps.short.D}).
Hence, $\operatorname*{res}%
\nolimits_{v-1}\left(  \operatorname*{res}\nolimits_{v}B\right) = \operatorname*{res}\nolimits_{v}\left(
\operatorname*{res}\nolimits_{v-1}C\right)$, and setting $D$ equal to this 12-table completes the proof in Case 1.
\end{proof}

\subsection{The normalization map}

The following proposition is the most important piece in our puzzle:

\begin{proposition}
\label{prop.BK.norm}For every $T\in\BenignTables$, there exists a unique
$N\in\OneTwoRPP$ such that $T\overset{\ast}{\Rrightarrow}N$.
\end{proposition}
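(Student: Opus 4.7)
My plan is to recognize the statement as a standard abstract-rewriting phenomenon (Newman's Lemma): a terminating, locally confluent binary relation admits unique normal forms. Here the relation is $\Rrightarrow$ on $\BenignTables$, termination comes from the strictly decreasing integer-valued statistic $\ell$ (Lemma~\ref{lem.descent-resolution-props}(d)), local confluence is Lemma~\ref{prop.descent-resolution-hyps}, and the $\Rrightarrow$-normal forms are precisely the elements of $\OneTwoRPP$, since having no descents is what distinguishes a 12-rpp inside $\BenignTables$.

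For existence, I iterate descent-resolution starting from $T$: whenever the current benign 12-table still has a descent, I pick one and apply $\operatorname{res}_{k}$. By Lemma~\ref{lem.descent-resolution-props}(d), $\ell$ strictly decreases at every step, so, since $\ell$ takes values in $\mathbb{N}$, the process terminates in finitely many steps. The terminal 12-table $N$ lies in $\BenignTables$ and has no descents, hence $N \in \OneTwoRPP$ and $T \overset{\ast}{\Rrightarrow} N$.

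For uniqueness, I prove the following confluence statement by strong induction on $\ell(T)$: for every $T \in \BenignTables$ and any $U, V \in \BenignTables$ with $T \overset{\ast}{\Rrightarrow} U$ and $T \overset{\ast}{\Rrightarrow} V$, there exists $W \in \BenignTables$ with $U \overset{\ast}{\Rrightarrow} W$ and $V \overset{\ast}{\Rrightarrow} W$. If either chain has length zero, the claim is immediate. Otherwise, factor the two chains as $T \Rrightarrow U_{1} \overset{\ast}{\Rrightarrow} U$ and $T \Rrightarrow V_{1} \overset{\ast}{\Rrightarrow} V$, apply Lemma~\ref{prop.descent-resolution-hyps} to $(T, U_{1}, V_{1})$ to obtain $D \in \BenignTables$ with $U_{1} \overset{\ast}{\Rrightarrow} D$ and $V_{1} \overset{\ast}{\Rrightarrow} D$, then invoke the inductive hypothesis at $U_{1}$ to merge the chains to $U$ and to $D$ into a common $U'$, at $V_{1}$ to merge $V$ and $D$ into a common $V'$, and finally at $D$ to merge $U'$ and $V'$ into a common $W$. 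All three invocations are legitimate because $\ell(U_{1}), \ell(V_{1}) < \ell(T)$ by Lemma~\ref{lem.descent-resolution-props}(d) and $\ell(D) \le \ell(U_{1}) < \ell(T)$ by Lemma~\ref{lem.descent-resolution-*props}(d). Uniqueness of $N$ follows at once: if $N_{1}, N_{2} \in \OneTwoRPP$ both satisfy $T \overset{\ast}{\Rrightarrow} N_{i}$, confluence yields a common $W$ with $N_{i} \overset{\ast}{\Rrightarrow} W$; but neither $N_{i}$ admits any $\Rrightarrow$-step, so both chains must be stationary, giving $N_{1} = W = N_{2}$.

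The main obstacle is largely psychological: the inductive step invokes the hypothesis three times in a carefully orchestrated order, which is the classical proof of Newman's Lemma. All of the genuine combinatorial content---termination via $\ell$ and the single-step diamond---has already been discharged by the earlier lemmas, so no further case analysis or manipulation of 12-tables is required here.
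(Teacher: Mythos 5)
Your proof is correct and follows essentially the same strategy as the paper: both are instances of Newman's Lemma, carried out by strong induction on $\ell(T)$ using termination (Lemma~\ref{lem.descent-resolution-props}~\textbf{(d)}) and the local diamond (Lemma~\ref{prop.descent-resolution-hyps}). The only difference is packaging: you prove full confluence of $\overset{\ast}{\Rrightarrow}$ as an intermediate statement (with the classical three-fold invocation of the inductive hypothesis), whereas the paper inducts directly on the claim that the set $\left\{ N\in\OneTwoRPP \mid T\overset{\ast}{\Rrightarrow}N\right\}$ is a singleton, writing it as a union over the one-step successors of $T$ of pairwise-intersecting one-element sets.
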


\begin{proof}
[Proof of Proposition \ref{prop.BK.norm}.] For every $T\in\BenignTables$, let
$\operatorname*{Norm}\left(  T\right)  $ denote the set%
\[
\left\{  N\in\OneTwoRPP\ \mid\ T\overset{\ast}{\Rrightarrow}N\right\}  .
\]
Thus, in order to prove Proposition \ref{prop.BK.norm}, we need to show that for every $T\in\BenignTables$ this set $\operatorname*{Norm}%
\left(  T\right)  $ is a one-element set.

We shall prove this by strong induction on $\ell\left(  T\right)  $. Fix
some $T\in\BenignTables$, and assume that
\begin{equation}
\operatorname*{Norm}\left(  S\right)  \text{ is a one-element set for every
}S\in\BenignTables\text{ satisfying }\ell\left(  S\right)  <\ell\left(  T\right)
\text{.}\label{pf.prop.BK.norm.indhyp}%
\end{equation}
We then need to prove that $\operatorname*{Norm}\left(  T\right)  $ is a
one-element set.

Let $\mathbf{Z} = \left\{  S\in\BenignTables\ \mid\ T\Rrightarrow
S\right\}  $. In other words, $\mathbf{Z}$ is the set of all benign 12-tables
$S$ which can be obtained from $T$ by resolving one descent. If
$\mathbf{Z}$ is empty, then $T \in \OneTwoRPP$, so that
$\operatorname*{Norm}\left(  T\right) = \{T\}$ and we are done. Hence, we can
assume that $\mathbf{Z}$ is nonempty. Therefore $T \notin \OneTwoRPP$.

Thus, every $N \in \OneTwoRPP$ satisfying $T \overset{\ast}{\Rrightarrow} N$
must satisfy $Z \overset{\ast}{\Rrightarrow} N$ for some $Z \in \mathbf{Z}$.
In other words, every $N \in \operatorname{Norm}\left(T\right)$ must belong
to $\operatorname{Norm}\left(Z\right)$ for some $Z \in \mathbf{Z}$. The
converse of this clearly holds as well. Hence, 
\begin{equation}
\operatorname*{Norm}\left(  T\right)  =\bigcup_{Z\in\mathbf{Z}%
}\operatorname*{Norm}\left(  Z\right)  .
\label{pf.prop.BK.norm.union}
\end{equation}

Let us now notice that:

\begin{itemize}
\item By Lemma \ref{lem.descent-resolution-props} \textbf{(d)} and
(\ref{pf.prop.BK.norm.indhyp}), for every $Z\in\mathbf{Z}$, the set
$\operatorname*{Norm}\left( Z\right)  $ is a one-element set.

\item By Lemma \ref{prop.descent-resolution-hyps}, for every
$B\in\mathbf{Z}$ and $C\in\mathbf{Z}$, we have
$\operatorname*{Norm}\left(  B\right)  \cap\operatorname*{Norm}\left(
C\right)  \neq\varnothing$. 
\ \ \ \ \footnote{In more detail:
Let $B\in\mathbf{Z}$ and $C\in\mathbf{Z}$.
By Lemma \ref{prop.descent-resolution-hyps} (applied to $A=T$)
there exists
a $D\in\BenignTables$ such that $B\overset{\ast}{\Rrightarrow}D$ and
$C\overset{\ast}{\Rrightarrow}D$. This $D$ has
$\ell\left( T\right)  >\ell\left(  B\right)
\geq\ell\left(  D\right)  $ (by Lemma \ref{lem.descent-resolution-props}
\textbf{(d)} and
Lemma \ref{lem.descent-resolution-*props} \textbf{(d)}, respectively).
Hence, by (\ref{pf.prop.BK.norm.indhyp}), the set
$\operatorname*{Norm}\left(  D\right)  $ is a one-element set. Its unique element
clearly lies in both $\operatorname*{Norm}\left(  B\right)  $ and
$\operatorname*{Norm}\left(  C\right)  $, so
$\operatorname*{Norm}\left(
B\right)  \cap\operatorname*{Norm}\left(  C\right)  \neq\varnothing$.

}

\end{itemize}

Hence, (\ref{pf.prop.BK.norm.union}) shows that $\operatorname*{Norm}\left(
T\right)  $ is a union of one-element sets, any two of which have a nonempty
intersection (and thus are identical).
Moreover, this union is nonempty (since $\mathbf{Z}$ is nonempty). Hence,
$\operatorname*{Norm}\left(  T\right)  $ itself is a one-element set. This
completes our induction.
\end{proof}

\begin{definition}
Let $T\in\BenignTables$. Proposition \ref{prop.BK.norm} shows that there exists a
unique $N\in\OneTwoRPP$ such that $T\overset{\ast}{\Rrightarrow}N$. We define
$\operatorname*{norm}\left(  T\right)  $ to be this $N$.
\end{definition}

\subsection{Definition of $\mathbf{B}$}

\begin{definition}
\label{defi.B}
Let us define a map $\mathbf{B}:\OneTwoRPP\rightarrow\OneTwoRPP$ as follows:
For every $T\in\OneTwoRPP$, set $\mathbf{B}\left(
T\right) = \operatorname*{norm}\left(  \operatorname*{flip}\left(
T\right)  \right)  $.
\end{definition}

In order to complete the proof of Lemma
\ref{lem.BK}, we need to show that $\mathbf{B}$ is an involution, preserves the ceq statistic, and switches the number of columns containing a $1$ with the number of columns containing a $2$. At this point, all of this is easy:

\begin{proof}[$\mathbf{B}$ is an involution]
Let $T\in \OneTwoRPP$. We have $\operatorname*{flip}\left(
T\right)  \overset{\ast}{\Rrightarrow}\operatorname*{norm}\left(
\operatorname*{flip}\left(  T\right)  \right)  =
\mathbf{B}\left(  T\right)  $. Lemma \ref{lem.descent-resolution-*props} \textbf{(c)} thus yields $\operatorname*{flip}\left(  \mathbf{B}\left(  T\right)
\right)  \overset{\ast}{\Rrightarrow}\operatorname*{flip}\left(
\operatorname*{flip}T\right)  = T$.

But $\mathbf{B}(\mathbf{B}(T)) = \operatorname*{norm}\left(  \operatorname*{flip}\left(
\mathbf{B}\left(  T\right)  \right)  \right)  $ is the unique $N\in\OneTwoRPP$
such that $\operatorname*{flip}\left(  \mathbf{B}\left(  T\right)  \right)
\overset{\ast}{\Rrightarrow}N$. Since $T \in \OneTwoRPP$, we have $\mathbf{B}(\mathbf{B}(T)) = T$, as desired.
\end{proof}

\begin{proof}[$\mathbf{B}$ preserves $\ceq$]
Let $T\in\OneTwoRPP$. As above, $\operatorname*{flip}\left(  T\right)  \overset{\ast
}{\Rrightarrow}\mathbf{B}\left(  T\right)  $. Lemma \ref{lem.descent-resolution-*props} \textbf{(a)} and Remark \ref{pf.lem.BK.flip.ircont} thus yield
$\operatorname*{ceq}\left(  \mathbf{B}\left(  T\right)  \right)
=\operatorname*{ceq}\left(  \operatorname*{flip}\left(  T\right)  \right)
=\operatorname*{ceq}\left(  T\right)$.
\end{proof}

\begin{proof}[$\mathbf{B}$ switches the numbers of columns containing 1 and 2]
Let $T\in\OneTwoRPP$. As above, $\operatorname*{flip}\left(  T\right)  \overset{\ast
}{\Rrightarrow}\mathbf{B}\left(  T\right)  $. Lemma \ref{lem.descent-resolution-*props} \textbf{(b)}
thus yields
$\operatorname*{ircont}\left(  \mathbf{B}\left(  T\right)  \right)
=\operatorname*{ircont}\left(  \operatorname*{flip}\left(  T\right)  \right)$.
Due to Remark \ref{pf.lem.BK.flip.ircont}, this is the result
of switching the first two entries of $\ircont\left(T\right)$.
\end{proof}

Lemma \ref{lem.BK} is now proven.

\section{\label{sect.BKclassical}The classical Bender-Knuth involutions}

\def\BK{{\mathbf{BK}}}

Fix a skew partition $\lambda/\mu$ and a positive integer $i$.
We claim that the involution $\mathbf{B}_{i}:\operatorname*{RPP}\left(
\lambda/\mu\right)  \rightarrow\operatorname*{RPP}\left(  \lambda/\mu\right)
$ we have constructed in the proof of Theorem~\ref{thm.BK}
is a generalization of the $i$-th Bender-Knuth involution defined for
semistandard tableaux. First, we shall
define the $i$-th Bender-Knuth involution (following \cite[proof of
Proposition 2.11]{GriRei15} and \cite[proof of Theorem 7.10.2]{Stan99}).

Let $\operatorname*{SST}\left(  \lambda/\mu\right)  $ denote the set of all
semistandard tableaux of shape $ \lm$. We define a
map $\BK_{i}:\operatorname*{SST}\left(  \lambda/\mu\right)  \rightarrow
\operatorname*{SST}\left(  \lambda/\mu\right)  $ as follows:

Let $T\in\operatorname*{SST}\left(  \lambda/\mu\right)  $. Then every column of $T$ contains at most one $i$ and
at most one $i+1$. If a column contains both an
$i$ and an $i+1$, we will mark its entries as
\textquotedblleft ignored\textquotedblright. Now, let $k\in\mathbb{N}_{+}$.
The $k$-th row of $T$ is a weakly increasing sequence of positive integers;
thus, it contains a (possibly empty) string of $i$'s followed by a (possibly
empty) string of $\left(  i+1\right)  $'s. These two strings together form a
substring of the $k$-th row which looks as follows:%
\[
\left(  i,i,\ldots,i,i+1,i+1,\ldots,i+1\right).
\]
Some of the entries of this substring are \textquotedblleft
ignored\textquotedblright; it is easy to see that the \textquotedblleft
ignored\textquotedblright\ $i$'s are gathered at the left end of the substring
whereas the \textquotedblleft ignored\textquotedblright\ $\left(  i+1\right)
$'s are gathered at the right end of the substring. So the substring looks
as follows:
\[
\left(  \underbrace{i,i,\ldots,i}_{\substack{a\text{ many }i\text{'s
which}\\\text{are \textquotedblleft ignored\textquotedblright}}%
},\underbrace{i,i,\ldots,i}_{\substack{r\text{ many }i\text{'s which}%
\\\text{are not \textquotedblleft ignored\textquotedblright}}%
},\underbrace{i+1,i+1,\ldots,i+1}_{\substack{s\text{ many }\left(  i+1\right)
\text{'s which}\\\text{are not \textquotedblleft ignored\textquotedblright}%
}},\underbrace{i+1,i+1,\ldots,i+1}_{\substack{b\text{ many }\left(
i+1\right)  \text{'s which}\\\text{are \textquotedblleft
ignored\textquotedblright}}}\right)
\]
for some $a,r,s,b\in\mathbb{N}$. Now, we change this substring into%
\[
\left(  \underbrace{i,i,\ldots,i}_{\substack{a\text{ many }i\text{'s
which}\\\text{are \textquotedblleft ignored\textquotedblright}}%
},\underbrace{i,i,\ldots,i}_{\substack{s\text{ many }i\text{'s which}%
\\\text{are not \textquotedblleft ignored\textquotedblright}}%
},\underbrace{i+1,i+1,\ldots,i+1}_{\substack{r\text{ many }\left(  i+1\right)
\text{'s which}\\\text{are not \textquotedblleft ignored\textquotedblright}%
}},\underbrace{i+1,i+1,\ldots,i+1}_{\substack{b\text{ many }\left(
i+1\right)  \text{'s which}\\\text{are \textquotedblleft
ignored\textquotedblright}}}\right)  .
\]
We do this for every $k\in\mathbb{N}_{+}$. At the end, we have obtained a new semistandard
tableau of shape $\lambda/\mu$. We define $\BK_{i}\left(
T\right)  $ to be this new tableau.

\begin{proposition}
\label{prop.BKclassical}The map $\BK_{i}:\operatorname*{SST}\left(  \lambda
/\mu\right)  \rightarrow\operatorname*{SST}\left(  \lambda/\mu\right)  $ thus
defined is an involution. It is known as the $i$\textit{-th Bender-Knuth
involution}.
\end{proposition}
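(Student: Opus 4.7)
The plan is to prove Proposition \ref{prop.BKclassical} in two steps: first, that $\BK_i(T)$ is a well-defined element of $\operatorname{SST}(\lambda/\mu)$, and second, that $\BK_i(\BK_i(T)) = T$. Both steps will hinge on a careful understanding of which cells are marked ``ignored''.

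For well-definedness, I verify the two SSYT conditions on $\BK_i(T)$. The row condition is immediate: after the swap, the relevant substring in each row has the form $i^a i^s (i+1)^r (i+1)^b$, which is still weakly increasing, and the entries of $T$ immediately to the left and right of this substring are $<i$ and $>i+1$ respectively, so there is no conflict at the boundaries. The column condition requires one key observation: a non-ignored $i$ sits, by definition, in a column that contains no $i+1$ at all, so the cell directly below it (if any) holds a value $\ge i+2$; thus changing this $i$ to $i+1$ preserves strict column increase. The symmetric argument handles non-ignored $(i+1)$'s: the cell directly above is $\le i-1$, so it can safely coexist with an $i$ in place of $i+1$.

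For the involution property, the crux is showing that the set of ignored cells of $T$ coincides with the set of ignored cells of $\BK_i(T)$. Ignored cells are never altered by $\BK_i$, so any cell ignored in $T$ remains ignored in $\BK_i(T)$. Conversely, suppose some column $c$ of $\BK_i(T)$ contains both an $i$ and an $i+1$. A short case analysis on which of these entries come from ignored versus non-ignored cells of $T$ shows that the mixed case is impossible: a non-ignored $i$ in column $c$ of $T$ forces that column to be free of $i+1$ in $T$, and the column observation from the well-definedness step rules out any non-ignored $(i+1)$ in column $c$ of $T$ either, so the only surviving case is that both entries of $\BK_i(T)$ in column $c$ come from ignored cells of $T$, meaning column $c$ was already ``ignored-bearing'' in $T$. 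Once the ignored set is proved invariant, the row-local action of $\BK_i$ on the non-ignored $i$'s and $(i+1)$'s of each row is simply the swap $(r,s) \mapsto (s,r)$, which is obviously involutive; hence $\BK_i(\BK_i(T)) = T$.

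The main obstacle is the preservation of the ignored set, which is the only place where the row-wise nature of the swap interacts nontrivially with the column constraints of semistandardness. Everything else is essentially bookkeeping that follows from the symmetric roles of $i$ and $i+1$ in the definition.
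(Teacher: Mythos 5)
The paper offers no proof of Proposition \ref{prop.BKclassical} at all: it is quoted as a known fact, with the construction taken from \cite[proof of Proposition 2.11]{GriRei15} and \cite[proof of Theorem 7.10.2]{Stan99}. So there is nothing in the paper to compare against; what you have written is the standard argument, and it is correct. You correctly isolate the only two non-routine points: the column condition for well-definedness (a non-ignored $i$, resp.\ $i+1$, lies in a column with no $i+1$, resp.\ no $i$, so its lower, resp.\ upper, neighbour is $\geq i+2$, resp.\ $\leq i-1$), and the invariance of the ignored set. Your case analysis for the latter does close: if a column of $\mathbf{BK}_i(T)$ held both an $i$ and an $i+1$ but one of them came from a non-ignored cell of $T$, that column would contain no $i+1$ (or no $i$) in $T$, so it could contain at most one cell with entry in $\left\{i,i+1\right\}$, which cannot supply two distinct entries of $\mathbf{BK}_i(T)$; hence both entries come from ignored cells and the column was already ignored-bearing in $T$. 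The one ingredient you use silently is the fact (asserted without proof in the definition of $\mathbf{BK}_i$ as well) that within each row the ignored $i$'s sit at the left end and the ignored $(i+1)$'s at the right end of the $\left(i,\ldots,i,i+1,\ldots,i+1\right)$ substring, so that after the ignored set is shown invariant the second application really does send $\left(s,r\right)$ back to $\left(r,s\right)$; this is easy to verify from semistandardness and is standard, so I do not count it as a gap.
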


Now, every semistandard tableau of shape $\lm$ is
also an rpp of shape $\lm$. Hence, $\mathbf{B}_{i}\left(  T\right)  $ is
defined for every $T\in\operatorname*{SST}\left(  \lambda/\mu\right)  $. Our claim is the following:

\begin{proposition}
\label{prop.BK=BK}For every $T\in\operatorname*{SST}\left(  \lambda
/\mu\right)  $, we have $\BK_{i}\left(  T\right)  =\mathbf{B}_{i}\left(
T\right)  $.
\end{proposition}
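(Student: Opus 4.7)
The plan is to reduce Proposition~\ref{prop.BK=BK} to a row-local comparison on 12-rpps. Since neither $\BK_i$ nor $\mathbf{B}_i$ modifies entries of $T$ lying outside $\left\{i,i+1\right\}$, it suffices to restrict to the 12-rpp $T'$ obtained from $T$ by discarding cells whose entries lie outside $\left\{i,i+1\right\}$ and relabelling $i\mapsto 1$, $i+1\mapsto 2$. Since $T$ is semistandard, every column of $T'$ contains at most one $1$ and at most one $2$; call such a 12-rpp \emph{sst-like}. In an sst-like 12-rpp, every pure column consists of a single cell, while every mixed column has exactly two cells in consecutive rows (a $1$ directly above a $2$). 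Under this reduction, $\BK_i$ translates into the following operation $\BK^{12}$ on $T'$: in each row $q$, the pure-column cells, read left to right, form a string $\underbrace{1,\ldots,1}_{a_q}\underbrace{2,\ldots,2}_{b_q}$; replace it by $\underbrace{1,\ldots,1}_{b_q}\underbrace{2,\ldots,2}_{a_q}$, while the mixed-column cells are left alone. It thus suffices to prove $\mathbf{B}(T')=\BK^{12}(T')$ for every sst-like $T'$.

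The crux is the following shape-theoretic claim: for sst-like $T'$, every descent of $\flip(T')$ is of type (21), and is \emph{localized in one row}, in the sense that both columns $k$ and $k+1$ involved are pure and their unique cells lie in a common row $q$. The two alternative descent types (M1) and (2M) in $\flip(T')$ would each require a mixed column (of height $2$) to sit next to a pure column (of height $1$) in a configuration that is forbidden by the Young-diagram inequalities for $\lambda/\mu$. A short case analysis shows that (M1) would force the top of column $k+1$ to sit strictly below the top of column $k$ (contradicting the partition property of $\mu$), while (2M) would force the bottom of column $k+1$ to sit strictly below the bottom of column $k$ (contradicting the partition property of $\lambda$).

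Given this, the descent-resolution process for $\flip(T')$ decomposes into mutually independent row-level processes: a single type-(21) resolution alters only the entries of two single-cell pure columns, both cells of which lie in one row $q$, so no other row is affected. Within row $q$, the pure cells of $\flip(T')$ read $\underbrace{2,\ldots,2}_{a_q}\underbrace{1,\ldots,1}_{b_q}$, and each (21) resolution transposes a grid-adjacent $(2,1)$ pair into $(1,2)$. By Proposition~\ref{prop.BK.norm}, the process terminates in a unique 12-rpp. Because the multiset of row-$q$ entries is preserved throughout, and the terminal state is weakly increasing and respects the rigid arrangement of column types in a single row of an sst-like 12-rpp (mixed-tops first, then pure cells, then mixed-bottoms), the terminal pure-cell string in row $q$ must be $\underbrace{1,\ldots,1}_{b_q}\underbrace{2,\ldots,2}_{a_q}$. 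This agrees with $\BK^{12}(T')$ in row $q$; as mixed cells are never modified, we conclude $\mathbf{B}(T')=\BK^{12}(T')$, completing the proof.

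The main obstacle is the shape-theoretic analysis of the second paragraph; once it is in hand, the rest is bookkeeping on top of Proposition~\ref{prop.BK.norm}. A subsidiary verification, which follows from the same shape constraint, is that $\BK^{12}(T')$ is automatically a valid 12-rpp: within each row of an sst-like 12-rpp, the mixed-top cells precede the pure cells which precede the mixed-bottom cells, and swapping the $1$- and $2$-counts among pure cells preserves this clean separation.
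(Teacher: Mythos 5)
Your proof is correct and follows essentially the same route as the paper's: reduce to the two-entry (semistandard) case, observe that pure columns are single cells so each row splits as mixed-tops, then pure cells, then mixed-bottoms, note that all descent resolutions are row-local type-(21) swaps, and conclude via the well-definedness of the normalization map. The only (cosmetic) difference is that you characterize all possible descents and deduce the terminal state from uniqueness, whereas the paper exhibits an explicit sorting sequence of resolutions.
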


\begin{proof}
[Proof of Proposition \ref{prop.BK=BK}.] 
Recall that the map $\mathbf{B}_i$ comes from the map $\mathbf{B}$ we defined on 12-rpps in Section~\ref{sect.proof}.
We could have constructed the map $\BK_i$ from the map $\BK_1$ in an analogous way.
We define a \textit{12-sst} to be a semistandard tableau whose entries all belong to the set $\left\{  1,2\right\}  $.
Clearly, to prove Proposition~\ref{prop.BK=BK}, it suffices to prove that $\BK_1(T) = \mathbf{B}(T)$ for all 12-ssts $T$.

Let $T$ be a 12-sst, and let $k \in \mathbb{N}_+$. The $k$-th row of $T$ has the form
\[
\left(  \underbrace{1,1,\ldots,1}_{\substack{a \ 1\text{'s
which are in}\\\text{mixed columns}}%
},\underbrace{1,1,\ldots,1}_{\substack{r \ 1\text{-pure}%
\\\text{columns}}%
},\underbrace{2,2,\ldots,2}_{\substack{s \ 2\text{-pure}%
\\\text{columns}}%
},\underbrace{2,2,\ldots,2}_{\substack{b \ 2\text{'s
which are in}\\\text{mixed columns}}}\right)
\]
where we use the observation that each 1-pure and each 2-pure column contains only one entry. Thus, the $k$-th row of $\operatorname*{flip}\left(  T\right)$ is
\[
\left(  \underbrace{1,1,\ldots,1}_{\substack{a \ 1\text{'s
which are in}\\\text{mixed columns}}%
},\underbrace{2,2,\ldots,2}_{\substack{r \ 2\text{-pure}%
\\\text{columns}}%
},\underbrace{1,1,\ldots,1}_{\substack{s \ 1\text{-pure}%
\\\text{columns}}%
},\underbrace{2,2,\ldots,2}_{\substack{b \ 2\text{'s
which are in}\\\text{mixed columns}}}\right).
\]
We can now repeatedly apply descent-resolution steps to obtain a tableau whose $k$-th row is
\[
\left(  \underbrace{1,1,\ldots,1}_{\substack{a \ 1\text{'s
which are in}\\\text{mixed columns}}%
},\underbrace{1,1,\ldots,1}_{\substack{s \ 1\text{-pure}%
\\\text{columns}}%
},\underbrace{2,2,\ldots,2}_{\substack{r \ 2\text{-pure}%
\\\text{columns}}%
},\underbrace{2,2,\ldots,2}_{\substack{b \ 2\text{'s
which are in}\\\text{mixed columns}}}\right).
\]
Repeating this process for every row of $\operatorname*{flip}\left(  T\right)$ (we can do this because each pure column contains only one entry, and thus each descent-resolution described above affects only one row), we obtain a 12-rpp. By the definition of $\mathbf{B}$, this rpp must equal $\mathbf{B}(T)$. By the above description, it is also equal to $\BK_1(T)$ (because the ignored entries in the construction of $\BK_1(T)$ are precisely the entries lying in mixed columns), which completes the proof.
\end{proof}

\section{The structure of 12-rpps}
\label{sect.structure}

In this section, 
we restrict ourselves to the
two-variable dual stable Grothendieck polynomial
$\g_\lm(x_1,x_2,0,0,\dots;\t)$ defined as the result of
substituting $0, 0, 0, \ldots$ for $x_3, x_4, x_5, \ldots$
in $\g_\lm$. We can represent it as a polynomial in
$\t$ with coefficients in $\Z[x_1,x_2]$:
$$\g_\lm(x_1,x_2,0,0,\dots;\t)=\sum_{\ceqvar\in \N^{\N_+}}\t^\ceqvar Q_\ceqvar(x_1,x_2),$$
where the sum ranges over all weak compositions $\alpha$,
and all but finitely many $Q_\ceqvar(x_1,x_2)$ are $0$.

We shall show that each $Q_\ceqvar(x_1,x_2)$ is either zero or has the form
\begin{equation}
\label{eq.Qalpha}
Q_\ceqvar(x_1,x_2)=(x_1x_2)^{M} P_{n_0}(x_1,x_2)P_{n_1}(x_1,x_2)\cdots P_{n_r}(x_1,x_2) , 
\end{equation}
where $M,r$ and $n_0,n_1,\dots,n_{r}$ are nonnegative integers naturally associated to $\ceqvar$ and $\lm$ and 
$$P_n(x_1,x_2)=\frac{x_1^{n+1}-x_2^{n+1}}{x_1-x_2}=x_1^{n}+x_1^{n-1}x_2+\cdots+x_1x_2^{n-1}+x_2^n.$$
We fix the skew partition $\lm$ throughout the whole section.
We will have a running example with $\lambda=(7,7,7,4,4)$ and $\mu=(5,3,2)$.

\subsection{Irreducible components}
We recall that a \textit{12-rpp} means an rpp whose entries all belong to the set $\left\{1, 2\right\}$. 

Given a 12-rpp $T$, consider the set $\NS(T)$ of all cells $(i,j)\in \lm$ such that $T(i,j)=1$ but $(i+1,j) \in \lm$ and $T(i+1,j)=2$. (In other words, $\NS(T)$ is the set of all non-redundant cells in $T$ which are filled with a $1$ and which are not the lowest cells in their columns.)
Clearly, $\NS(T)$ contains at most one cell from each column; thus, let us write $\NS(T)=\{(i_1,j_1),(i_2,j_2),\dots,(i_s,j_s)\}$ with $j_1<j_2<\cdots<j_s$. Because $T$ is a 12-rpp, it follows that the numbers $i_1,i_2,\dots,i_s$ decrease weakly, therefore they form a partition which we
denoted 
$$\seplist(T):=(i_1,i_2,\dots,i_s)$$ 
in Section \ref{subsection:benign}. This partition
will be called the \textit{seplist-partition of $T$}.
An example of calculation of $\seplist(T)$ and $\NS(T)$ is illustrated on Figure \ref{fig:seplist}. 

\def\one{{\mathbf{1}}}
\def\two{{\mathbf{2}}}

\begin{figure}
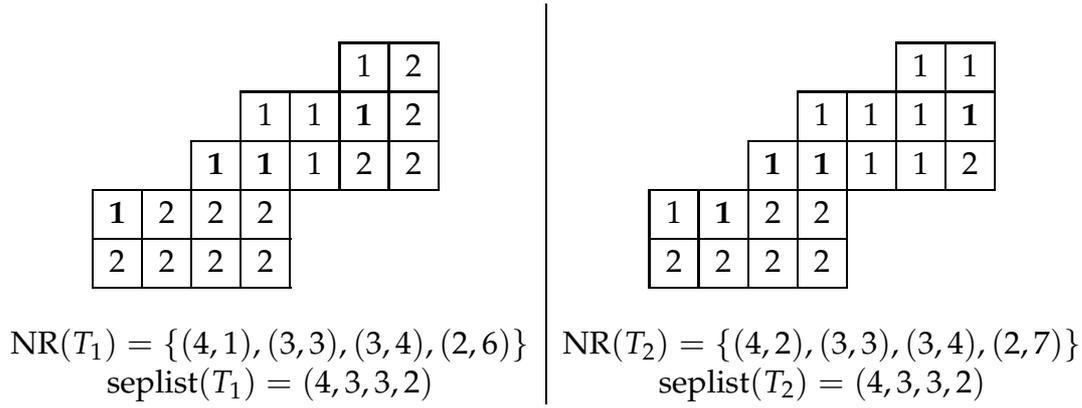

\begin{tabular}{c|c}
 & \\
\begin{ytableau}
\none& \none& \none&\none&\none & 1   & 2 \\
\none& \none& \none& 1   & 1    &\one & 2 \\
\none& \none& \one &\one & 1    & 2   & 2 \\
\one &    2 & 2    & 2 \\
2    &    2 & 2    & 2
\end{ytableau}\ \  &
\begin{ytableau}
\none& \none& \none&\none&\none & 1   & 1    \\
\none& \none& \none& 1   & 1    & 1   & \one \\
\none& \none& \one &\one & 1    & 1   & 2    \\
1    & \one & 2    & 2 \\
2    &    2 & 2    & 2
\end{ytableau}\\
 & \\
$\NS(T_1)=\{(4,1),(3,3),(3,4),(2,6)\}$ & $\NS(T_2)=\{(4,2),(3,3),(3,4),(2,7)\}$ \\
$\seplist(T_1)=(4,3,3,2)$ & $\seplist(T_2)=(4,3,3,2)$ 
\end{tabular}\\
\caption{\label{fig:seplist} Two 12-rpps of the same shape and with the same seplist-partition.}
\end{figure}

We would like to answer the following question: for which partitions $\seplistvar=(i_1\geq \cdots\geq i_s>0)$ does there exist a 12-rpp $T$ of shape $\lm$ such that $\seplist(T)=\seplistvar$?

A trivial necessary condition for this to happen is that there should exist some numbers $j_1<j_2<\cdots<j_s$ such that 
\begin{equation}\label{cond:necessary}
(i_1,j_1),(i_1+1,j_1),(i_2,j_2),(i_2+1,j_2),\dots,(i_s,j_s),(i_s+1,j_s)\in \lm. 
\end{equation}

Until the end of Section \ref{sect.structure}, we make an
assumption: namely, that the skew partition $\lm$ is
connected as a subgraph of $\Z^2$ (where two nodes are connected
if and only if their cells have an edge in common), and that
it has no empty columns. This is a harmless assumption,
since every skew partition $\lm$ can be written as a disjoint union
of such connected skew partitions and the corresponding seplist-partition splits into several independent parts, the polynomials $\g_\lm$ get multiplied and the right hand side of (\ref{eq.Qalpha}) changes accordingly. 

\newcommand{\nuxy}[2]{\seplistvar\big|_{\subseteq[#1,#2)}}
\newcommand{\nuxycap}[2]{\seplistvar\big|_{\cap[#1,#2)}}
\newcommand{\nupxy}[2]{\seplistvar^{\prime}\big|_{\subseteq[#1,#2)}}

\def\nuab{\nuxy{a}{b}}
\def\nuabcap{\nuxycap{a}{b}}

For each integer $i$, the set of all integers $j$ such that $(i,j),(i+1,j)\in\lm$ is just an interval $[\mu_{i}+1,\lambda_{i+1}]$, which we call \textit{the support of $i$} and denote $\supp(i):=[\mu_{i}+1,\lambda_{i+1}]$.

We say that a partition $\seplistvar$ is \textit{admissible} if
every $k$ satisfies $\supp(i_k) \neq \varnothing$. (This is
clearly satisfied when there exist $j_1<j_2<\cdots<j_s$
satisfying (\ref{cond:necessary}), but also in other cases.)
Assume that $\seplistvar = \left(i_1 \geq \cdots \geq i_s > 0\right)$ is an admissible partition.
For two integers $a< b$, we let $\nuab$ denote the subpartition $(i_r,i_{r+1},\dots,i_{r+q})$ of $\seplistvar$, where $[r, r+q]$ is the (possibly empty) set of all $k$ for which $\supp(i_k)\subseteq [a,b)$. In this case, we put\footnote{Here and in the following, $\#\kappa$ denotes the length of a partition $\kappa$.} $\#\nuab:=q+1$, which is just the number of entries in $\nuab$. Similarly, we set $\nuabcap$ to be the subpartition $(i_r,i_{r+1},\dots,i_{r+q})$ of $\seplistvar$, where $[r, r+q]$ is the set of all $k$ for which $\supp(i_k)\cap [a,b)\neq\emptyset$.
For example, for $\seplistvar=(4,3,3,2)$ and $\lm$ as on Figure \ref{fig:seplist}, we have 
$$\supp(3)=[3,4],\ \supp(2)=[4,7],\ \supp(4)=[1,4],$$ 
$$\nuxy{2}{7}=(3,3),\ \nuxy{2}{8}=(3,3,2),\ \nuxy{4}{8}=(2),\ \nuxycap{4}{5}=(4,3,3,2),\ \#\nuxy{2}{7}=2.$$

\begin{remark}
 If $\seplistvar$ is not admissible, that is, if $\supp(i_k)=\emptyset$ for some $k$, then $i_k$ belongs to $\nuab$ for any $a,b$, so $\nuab$ might no longer be a contiguous subpartition of $\seplistvar$. On the other hand, if $\seplistvar$ is an admissible partition, then the partitions $\nuab$ and $\nuabcap$ are clearly admissible as well. For the rest of this section, we will only work with admissible partitions.
\end{remark}

We introduce several definitions: An admissible partition $\seplistvar=(i_1\geq \cdots\geq i_s>0)$ is called

\begin{tabular}{@{$\bullet$ }lll}
 \textit{non-representable} & if for some $a<b$ we have & $\#\nuab>b-a$;\\
 \textit{representable}& if for all $a<b$ we have& $\#\nuab\leq b-a$;\\
\end{tabular}

a representable partition $\seplistvar$ is called

\begin{tabular}{@{$\bullet$ }lll} 
 \textit{irreducible}& if for all  $a<b$ we have &$\#\nuab < b-a$;\\
 \textit{reducible} &if for some $a<b$ we have &$\#\nuab=b-a$.
\end{tabular}

For example, $\seplistvar=(4,3,3,2)$ is representable but reducible because we have $\nuxy{3}{5}=(3,3)$ so $\#\nuxy{3}{5}=2=5-3$.

Note that these notions depend on the skew partition; thus, when we want to use a skew partition $\widetilde\lm$ rather than $\lm$, we will write that $\seplistvar$ is non-representable/irreducible/etc. \textit{with respect to $\widetilde\lm$}, and we denote the corresponding partitions by $\nuab^{\widetilde{\lm}}$.

These definitions can be motivated as follows. Suppose that a partition $\seplistvar$ is non-representable, so there exist integers $a<b$ such that $\#\nuab>b-a$. Recall that $\nuab=:(i_r,i_{r+1},\dots,i_{r+q})$ contains all entries of $\seplistvar$ whose support is a subset of $[a,b)$. Thus in order for condition (\ref{cond:necessary}) to be true there must exist some integers $j_r<j_{r+1}<\cdots<j_{r+q}$ such that 
$$(i_r, j_r),(i_r+1, j_r),\dots,(i_{r+q},j_{r+q}),(i_{r+q}+1,j_{r+q})\in\lm.$$
On the other hand, by the definition of the support, we must have $j_k\in \supp(i_k)\subseteq [a,b)$ for all $r\leq k\leq r+q$. Therefore we get $q+1$ distinct elements of $[a,b)$ which is impossible if $q+1=\#\nuab>b-a$. It means that a non-representable partition $\seplistvar$ is never a seplist-partition of a 12-rpp $T$. 

Suppose now that a partition $\seplistvar$ is reducible, so for some $a<b$ we get an equality $\#\nuab=b-a$. Then these integers $j_r<\cdots<j_{r+q}$ should still all belong to $[a,b)$ and there are exactly $b-a$ of them, hence 
\begin{equation}
j_r=a,\ j_{r+1}=a+1,\ \cdots,\ j_{r+q}=a+q=b-1 .
\label{eq.reducible.mixblock-j}
\end{equation}
Because $\supp(i_r)\subseteq[a,b)$ but $\supp(i_r)\neq\varnothing$ (since $\nu$ is admissible), we have $(i_r,a-1)\notin\lm$. Thus, placing a $1$ into $(i_r,a)$ and $2$'s into $(i_r+1,a),(i_r+2,a),\dots$ does not put any restrictions on entries in columns $1,\dots, a-1$. And the same is true for columns $b,b+1,\dots$ when we place a $2$ into $(i_{r+q}+1,b-1)$ and $1$'s into all cells above. Thus, if a partition $\seplistvar$ is reducible, then the filling of columns $a,a+1,\dots, b-1$ is uniquely determined (by (\ref{eq.reducible.mixblock-j})), and the filling of the rest can be arbitrary -- the problem of existence of a 12-rpp $T$ such that $\seplist(T)=\seplistvar$ reduces to two smaller independent problems of the same kind (one for the columns $1,2,\ldots,a-1$, the other for the columns\footnote{Recall that a 12-rpp of shape $\lm$ cannot have any nonempty column beyond the $\lambda_1$'th one.} $b,b+1,\ldots,\lambda_1$). One can continue this reduction process and end up with several independent irreducible components separated from each other by mixed columns. An illustration of this phenomenon can be seen on Figure \ref{fig:seplist}: the columns $3$ and $4$ must be mixed for any 12-rpps $T$ with $\seplist(T)=(4,3,3,2)$.

More explicitly, we have thus shown that every nonempty interval $\left[a,b\right) \subseteq \left[1, \lambda_1+1\right)$ satisfying $\#\nuab = b-a$ splits our problem into two independent subproblems. But if two such intervals $\left[a,b\right)$ and $\left[c,d\right)$ satisfy $a\leq c\leq b\leq d$ then their union $[a,d)$ is another such interval (because in this case, inclusion-exclusion gives $\#\nuxy{a}{d} \geq \#\nuab + \#\nuxy{c}{d} - \#\nuxy{c}{b}$, but $\#\nuxy{c}{b}\leq b-c$ by representability of $\nu$).
Hence, the maximal (with respect to inclusion) among all such intervals are pairwise disjoint and separated from each other by at least a distance of $1$.
This yields part \textbf{(a)} of the following lemma:
\begin{lemma}
\label{lemma:irreducible}
 Let $\seplistvar$ be a representable partition.

\begin{enumerate}

\item[\textbf{(a)}] There exist unique integers $(1=b_0\leq a_1<b_1<a_2<b_2<\cdots<a_r<b_r\leq a_{r+1}=\lambda_1+1)$ satisfying the following two conditions:
 \begin{enumerate}
  \item For all $1\leq k\leq r$, we have $\#\nuxy{a_k}{b_k} =b_k-a_k$.
  \item The set $\bigcup_{k=0}^{r}[b_k,a_{k+1})$ is minimal (with respect to inclusion) among all sequences $(1=b_0\leq a_1<b_1<a_2<b_2<\cdots<a_r<b_r\leq a_{r+1}=\lambda_1+1)$ satisfying property 1.
 \end{enumerate}

Furthermore, for these integers, we have:

\item[\textbf{(b)}] The partition $\nu$ is the concatenation
\[
\left(\nuxycap{b_0}{a_1}\right) \left(\nuxy{a_1}{b_1}\right) \left(\nuxycap{b_1}{a_2}\right) \left(\nuxy{a_2}{b_2}\right) \cdots \left(\nuxycap{b_r}{a_{r+1}}\right)
\]
(where we regard a partition as a sequence of positive integers, with no trailing zeroes).

\item[\textbf{(c)}] The partitions $\nuxycap{b_k}{a_{k+1}}$ are irreducible with respect to $\lm\big|_{[b_k,a_{k+1})}$,
 which is the skew partition $\lm$ with columns $1,2,\dots,b_k-1,a_{k+1},a_{k+1}+1,\dots$ removed.

\end{enumerate}
\end{lemma}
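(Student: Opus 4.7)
The plan is to tackle the three parts in order, with part (a) providing the minimality that drives part (c).

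For part (a), I would let $\mathcal{F}$ denote the family of all \emph{saturated} intervals, that is, intervals $[a,b)\subseteq[1,\lambda_1+1)$ with $\#\nuab=b-a$. The paragraph immediately preceding the lemma already shows that if $[a,b)$ and $[c,d)$ lie in $\mathcal{F}$ with $a\leq c\leq b\leq d$, then $[a,d)\in\mathcal{F}$. Consequently the maximal (by inclusion) members of $\mathcal{F}$ are pairwise disjoint and separated by at least one column. Enumerating them as $[a_1,b_1),\ldots,[a_r,b_r)$ in left-to-right order and setting $b_0=1$, $a_{r+1}=\lambda_1+1$ produces a valid sequence, and minimality of $\bigcup_k[b_k,a_{k+1})$ follows because enlarging any $[a_i,b_i)$ to its maximal extension in $\mathcal{F}$ only shrinks the complement; this also forces uniqueness.

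For part (b), the key observation is that the supports $\supp(i_k)=[\mu_{i_k}+1,\lambda_{i_k+1}]$ have both endpoints weakly increasing in $k$, since $\mu$ and $\lambda$ are weakly decreasing partitions while $i_k$ is weakly decreasing in $k$. Hence for any interval $[a,b)$ the set $\{k:\supp(i_k)\subseteq[a,b)\}$ is a contiguous block of indices. The pieces $\nuxy{a_j}{b_j}$ are pairwise disjoint, and each is disjoint from every $\nuxycap{b_{k'}}{a_{k'+1}}$ because their underlying intervals are. The subtle point is that the gap pieces $\nuxycap{b_k}{a_{k+1}}$ are also mutually disjoint, which I plan to prove by contradiction: if $\supp(i_k)$ met both $[b_j,a_{j+1})$ and $[b_{j'},a_{j'+1})$ with $j<j'$, it would strictly contain the saturated interval $[a_{j+1},b_{j+1})$ on both sides. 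Letting $[m_1,m_n]$ be the contiguous block of indices $m$ with $\supp(i_m)\subseteq[a_{j+1},b_{j+1})$ (nonempty since $n=b_{j+1}-a_{j+1}\geq 1$ by saturation), monotonicity of the support endpoints rules out each of the three possibilities $k<m_1$, $k>m_n$, and $m_1\leq k\leq m_n$. Finally every $i_k$ lies in some piece because the $[a_j,b_j)$ and $[b_k,a_{k+1})$ together tile $[1,\lambda_1+1)$, and the correct left-to-right ordering of the concatenation follows once more from monotonicity.

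For part (c), I would argue by contradiction using the minimality from (a). Suppose $\seplistvar' := \nuxycap{b_k}{a_{k+1}}$ were reducible with respect to $\widetilde{\lm}:=\lm\big|_{[b_k,a_{k+1})}$, producing $b_k\leq c<d\leq a_{k+1}$ with $\#\nupxy{c}{d}^{\widetilde{\lm}}=d-c$. By part (b), for every $i\in\seplistvar'$ the support $\supp(i)$ lies inside $[a_k,b_{k+1})$, so it can only extend into $[a_k,b_k)$ on the left and $[a_{k+1},b_{k+1})$ on the right. A four-case analysis on whether $c=b_k$ and whether $d=a_{k+1}$ then produces a saturated interval in $\lm$ contradicting the choices in (a): in the interior case $c>b_k$ and $d<a_{k+1}$ the interval property of supports forces $\nupxy{c}{d}^{\widetilde{\lm}}=\nuxy{c}{d}$, so $[c,d)$ itself is saturated in $\lm$ and could have been inserted into the sequence between $[a_k,b_k)$ and $[a_{k+1},b_{k+1})$, violating minimality; in each boundary case a short counting argument combining $\nupxy{c}{d}^{\widetilde{\lm}}$ with the relevant $\nuxy{a_k}{b_k}$ or $\nuxy{a_{k+1}}{b_{k+1}}$ shows that $[a_k,d)$, $[c,b_{k+1})$, or $[a_k,b_{k+1})$ is saturated in $\lm$, strictly enlarging a maximal interval from (a).

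The main obstacle will be the mutual disjointness of the gap pieces in part (b); it requires carefully combining the monotone rightward shift of supports with the saturation condition on $[a_{j+1},b_{j+1})$. Once (b) is available, part (c) becomes essentially bookkeeping on top of the minimality from (a).
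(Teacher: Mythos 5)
Your proposal is correct and takes essentially the same route as the paper's proof: part (a) by merging overlapping saturated intervals (as in the paragraph preceding the lemma), part (b) by the weak monotonicity of the support endpoints together with the non-nesting contradiction against an entry supported inside the intervening saturated interval, and part (c) by the same four-case counting argument against the minimality from (a). You even work out the fourth case, which the paper leaves as an exercise.
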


\begin{proof}
Part \textbf{(a)} has already been proven.

\textbf{(b)} Let $\nu=(i_1\geq\cdots\geq i_s>0)$. If $\supp(i_r)\subseteq [a_k,b_k)$ for some $k$, then $i_r$ appears in exactly one of the concatenated partitions, namely, $\nuxy{a_k}{b_k}$. Otherwise there is an integer $k$ such that $\supp(i_r)\cap [b_k,a_{k+1})\neq\emptyset$. It remains to show that such $k$ is unique, that is, that $\supp(i_r)\cap [b_{k+1},a_{k+2})=\emptyset$. Assume the contrary. The interval $[a_{k+1},b_{k+1})$ is nonempty, therefore there is an entry $i$ of $\nu$ with $\supp(i)\subseteq [a_{k+1},b_{k+1})$. It remains to note that we get a contradiction: we get two numbers $i,i_r$ with $\supp(i_r)$ being both to the left and to the right of $\supp(i)$. 

\textbf{(c)} Fix $k$. Let $J$ denote the restricted skew partition $\lm\big|_{[b_k,a_{k+1})}$, and let $\seplistvar^{\prime} = \nuxycap{b_k}{a_{k+1}}$.
We need to show that if $\left[c,d\right)$ is a nonempty interval contained in $\left[b_k, a_{k+1}\right)$, then $\#\nupxy{c}{d}^{J} < d-c$. We are in one of the following four cases:

\begin{itemize}
\item \textit{Case 1: We have $c> b_k$ (or $k=0$) and $d< a_{k+1}$ (or $k=r$).} In this case, every $i_p$ with $\supp^J(i_p) \subseteq [c,d)$ must satisfy $\supp(i_p) \subseteq [c,d)$. Hence, $\nupxy{c}{d}^{J} = \nuxy{c}{d}$, so that $\#\nupxy{c}{d}^{J} = \#\nuxy{c}{d} < d-c$, and we are done.
\item \textit{Case 2: We have $c=b_k$ and $k>0$ (but not $d=a_{k+1}$ and $k<r$).} Assume (for the sake of contradiction) that $\#\nupxy{c}{d}^{J} \geq d-c$. Then, the $i_p$ satisfying $\supp^J(i_p) \subseteq [c,d)$ must satisfy $\supp(i_p) \subseteq [a_k, d)$ (since otherwise, $\supp(i_p)$ would intersect both $[b_{k-1}, a_k)$ and $[b_k, a_{k+1})$, something we have ruled out in the proof of \textbf{(b)}). Thus, $\#\nuxy{a_k}{d} \geq (d-c)+(b_k-a_k) = d-a_k$, which contradicts the minimality of $\bigcup_{k=0}^{r}[b_k,a_{k+1})$ (we could increase $b_k$ to $d$).
\item \textit{Case 3: We have $d=a_{k+1}$ and $k<r$ (but not $c=b_k$ and $k>0$).} The argument here is analogous to Case 2.
\item \textit{Case 4: Neither of the above.} Exercise.
\end{itemize}
\end{proof}

\begin{definition}
 
 In the context of Lemma \ref{lemma:irreducible}, for $0\leq k\leq r$ the subpartitions $\nuxycap{b_k}{a_{k+1}}$ are called \textit{the irreducible components of $\seplistvar$} and the nonnegative integers $n_k:=a_{k+1}-b_k-\#\nuxycap{b_k}{a_{k+1}}$ are called their \textit{degrees}. (For $T$ with $\seplist(T)=\seplistvar$, the $k$-th degree $n_k$ is equal to the number of pure columns of $T$ inside the corresponding $k$-th irreducible component. All $n_k$ are positive, except for $n_0$ if $a_1 = 1$ and $n_r$ if $b_r = \lambda_1 + 1$.)
\end{definition}

\begin{example}
 For $\seplistvar=(4,3,3,2)$ we have $r=1,b_0=1,a_1=3,b_1=5,a_2=8$. The irreducible components of $\seplistvar$ are $(4)$ and $(2)$ and their degrees are $3-1-1=1$ and $8-5-1=2$ respectively. We have $\nuxycap{1}{3}=(4),\nuxy{3}{5}=(3,3),\nuxycap{5}{8}=(2)$.
\end{example}

\subsection{The structural theorem and its applications}
It is easy to see that for a 12-rpp $T$, the number $\#\seplist(T)$ is equal to the number of mixed columns in $T$.

Recall that $\OneTwoRPP$ denotes the set of all 12-rpps $T$ of shape $\lm$, and let $\OneTwoRPPCutvar$ denote its subset consisting of all 12-rpps $T$ with $\seplist(T)=\seplistvar$. Now we are ready to state a theorem that completely describes the structure of irreducible components (which will be proven later):
\begin{theorem}
\label{thm:12rpps}
 Let $\seplistvar$ be an irreducible partition. Then for all $0\leq m\leq \lambda_1-\#\seplistvar$ there is exactly one 12-rpp $T\in\OneTwoRPPCutvar$ with $\#\seplistvar$ mixed columns, $m$ $1$-pure columns and $(\lambda_1-\#\seplistvar-m)$ $2$-pure columns. Moreover, these are the only elements of $\OneTwoRPPCutvar$. In other words, for an irreducible partition $\seplistvar$ we have
  \begin{equation}
 \label{eq:thm12rpps}
 \sum_{T\in\OneTwoRPPCutvar} \x^{\ircont(T)}=(x_1x_2)^{\#\seplistvar} P_{\lambda_1-\#\seplistvar}(x_1,x_2).
 \end{equation}
\end{theorem}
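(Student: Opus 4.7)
My plan is to prove Theorem \ref{thm:12rpps} by exhibiting, for each $m \in \{0, 1, \ldots, \lambda_1 - s\}$ (where $s := \#\seplistvar$), an explicit 12-rpp $T_m$ with the required statistics, and then arguing that $T_m$ is the unique such 12-rpp. Once the bijection $\OneTwoRPPCutvar \xrightarrow{\sim} \{0,1,\ldots,\lambda_1 - s\}$, $T \mapsto m$, is established, the generating-function identity (\ref{eq:thm12rpps}) follows from $\ircont(T_m) = (m + s, \lambda_1 - m, 0, 0, \ldots)$ via
\[
\sum_{T \in \OneTwoRPPCutvar} \x^{\ircont(T)} = \sum_{m=0}^{\lambda_1 - s} x_1^{m+s} x_2^{\lambda_1 - m} = (x_1 x_2)^s P_{\lambda_1 - s}(x_1, x_2).
\]

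The starting observation is that, for connected $\lm$ (which we assume throughout this section), the 2-pure columns of any 12-rpp of shape $\lm$ must form a \emph{suffix} of the column sequence: since any two adjacent columns share a row, the transitions ``2-pure $\to$ 1-pure'' and ``2-pure $\to$ mixed'' both immediately violate weak row-monotonicity. Hence, for any $T \in \OneTwoRPPCutvar$ with $m$ 1-pure columns, the last $\lambda_1 - s - m$ columns of $T$ are 2-pure, while the first $s + m$ columns consist of exactly $s$ mixed and $m$ 1-pure columns.

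For the construction and uniqueness of $T_m$, I would determine the positions of the mixed columns inside $[1, s+m]$ by a greedy-from-the-right procedure: set $j_s := \max \{ j \in [1, s+m] \cap \supp(i_s) \}$, and recursively $j_{k-1} := \max \{ j \in [1, j_k - 1] \cap \supp(i_{k-1}) \}$. The 1-pure columns then fill the remaining positions inside $[1, s+m]$. Uniqueness is forced by the only non-trivial adjacent-column constraint among non-2-pure columns, namely the ``(mixed, 1-pure)'' transition, which requires $b_{j_k + 1} \leq i_k$ (where $b_j$ denotes the bottom row of column $j$ in $\lm$) whenever a 1-pure column follows the mixed column at $j_k$; this forces each $j_k$ to be as far right as possible within its admissible window. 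The strict Hall inequality $\#\nuab < b - a$ from irreducibility guarantees that the greedy procedure terminates with $s$ distinct positions and that the gap constraint $b_{j_k+1} \leq i_k$ is satisfied at every gap between successive mixed columns (and after $j_s$).

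The main obstacle will be verifying these last two statements: namely, that for every $m \in \{0, \ldots, \lambda_1 - s\}$ the greedy procedure (i) succeeds in producing $s$ admissible positions, and (ii) yields a valid 12-rpp. Both hinge on a careful interaction between the strict Hall inequality and the geometry of the supports $\supp(i_k) = [\mu_{i_k}+1, \lambda_{i_k+1}]$; in particular, one must track how $\#\nuab$ bounds on subintervals control both the existence of admissible greedy choices and the geometric inequality $b_{j_k+1}\leq i_k$ at each mixed-to-pure transition. I expect this part of the argument to amount to a somewhat delicate induction on $s$, extracting the rightmost mixed column and reducing to a smaller irreducible configuration on columns $[1, j_s - 1]$, while using irreducibility to preserve the strict Hall inequality under this reduction.
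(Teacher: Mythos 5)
Your opening structural claim---that the $2$-pure columns of any $T\in\OneTwoRPPCutvar$ form a suffix of the column sequence---is false, and the rest of your construction is built on it. The transition ``$2$-pure $\to$ $1$-pure'' does violate row-monotonicity in a shared row, but ``$2$-pure $\to$ mixed'' does not: all the $1$'s of the mixed column can sit strictly above the topmost cell of the $2$-pure column, in which case every shared row reads $(2,2)$. Concretely, take $\lambda=(3,3,3,1)$, $\mu=(1,1)$, $\seplistvar=(2)$. Here column $1$ occupies rows $3$--$4$, columns $2$ and $3$ occupy rows $1$--$3$, the shape is connected, $\supp(2)=[2,3]$, and $\seplistvar$ is irreducible (the only intervals $[a,b)$ containing $\supp(2)$ have $b-a\geq 2>1$). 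The unique element of $\OneTwoRPPCutvar$ with $m=0$ fills column $1$ with $2$'s, column $2$ with $1,1,2$, and column $3$ with $2$'s; it is a valid 12-rpp with $\seplist=(2)$ whose $2$-pure columns are columns $1$ and $3$. Your greedy procedure, which confines the mixed columns to the window $[1,s+m]$, already breaks here: for $m=0$ it must place the unique mixed column in column $1$, but $[1,1]\cap\supp(2)=\varnothing$, so $j_1$ is undefined. The correct statement is only the weaker Lemma \ref{lemma:irreducible}-adjacent fact proved in the paper as Lemma \ref{lemma:leftRight}: every $1$-pure column lies to the left of every $2$-pure column; mixed columns may occur on either side of $2$-pure ones.

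The paper's proof of Theorem \ref{thm:12rpps} circumvents this by induction on the number of columns: if $m>0$, then Lemma \ref{lemma:leftRight} plus irreducibility forces the \emph{first} column to be $1$-pure (any mixed columns to its left would have their supports trapped in $[1,q+1)$, contradicting the strict Hall inequality); one deletes that column, strips off whatever block of columns the interval condition now forces to be mixed, and recurses on a smaller irreducible configuration; if $m<\lambda_1-\#\seplistvar$, a mirrored argument forces the \emph{last} column to be $2$-pure. If you want to salvage an explicit left-to-right description, you must allow mixed columns to be interleaved with the $2$-pure ones on the right, at which point you are essentially reconstructing the paper's two-sided peeling argument. (Your final reduction of (\ref{eq:thm12rpps}) to the bijection $T\mapsto m$ via $\ircont(T_m)=(m+\#\seplistvar,\ \lambda_1-m,\ 0,0,\ldots)$ is correct and matches the paper.)
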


\begin{example}
 Each of the two 12-rpps on Figure \ref{fig:seplist} has two irreducible components. One of them is supported on the first two columns and the other one is supported on the last three columns. Here are all possible 12-rpps for each component:
 
\begin{tabular}{c|c}
\begin{tabular}{cc}
 & \\
\begin{ytableau}
1 & \one \\
1 & 2
\end{ytableau}\ &
\begin{ytableau}
\one& 2\\
2   & 2
\end{ytableau}\\
\end{tabular} & 
\begin{tabular}{ccc}
\begin{ytableau}
\none & 1 & 1\\
1 & 1 & \one \\
1 & 1 & 2
\end{ytableau}\ &
\begin{ytableau}
\none & 1 & 2\\
1 & \one & 2 \\
1 & 2 & 2
\end{ytableau}\ &
\begin{ytableau}
\none & 2 & 2\\
\one & 2 & 2 \\
2 & 2 & 2
\end{ytableau} \\
\end{tabular}\\
$\lambda=(2,2);\ \mu=();\ \seplistvar=(4)$ & $\lambda=(3,3,3);\ \mu=(1);\ \seplistvar=(2)$. 
\end{tabular}\\
\end{example}

After decomposing into irreducible components, we can obtain a formula for general representable partitions:
\begin{corollary}
\label{cor:thm12rppsCompositeCor}
 Let $\seplistvar$ be a representable partition. Then
  \begin{equation}
  \label{eq:thm12rppsComposite}
\sum_{T\in\OneTwoRPPCutvar} \x^{\ircont(T)}=(x_1x_2)^{M} P_{n_0}(x_1,x_2)P_{n_1}(x_1,x_2)\cdots P_{n_r}(x_1,x_2),
  \end{equation}
 where the numbers $M,r,n_0,\dots,n_r$ are defined above: $M=\#\seplistvar$, $r+1$ is the number of irreducible components of $\seplistvar$ and $n_0,n_1,\dots,n_r$ are their degrees.
\end{corollary}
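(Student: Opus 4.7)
The strategy is to use Lemma~\ref{lemma:irreducible} to break the problem into independent sub-problems indexed by the irreducible components of $\seplistvar$, then apply Theorem~\ref{thm:12rpps} to each sub-problem and observe that the reducible ``mix-blocks'' contribute a pure monomial.

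More precisely, let $\seplistvar = (i_1 \geq \cdots \geq i_s > 0)$ be representable, and let $1 = b_0 \leq a_1 < b_1 < \cdots < a_r < b_r \leq a_{r+1} = \lambda_1 + 1$ be the integers produced by Lemma~\ref{lemma:irreducible}\textbf{(a)}. First I would argue that any $T \in \OneTwoRPPCutvar$ is determined \emph{in a forced way} on the reducible mix-blocks: by the analysis preceding Lemma~\ref{lemma:irreducible}, on each interval $[a_k,b_k)$ with $\#\nuxy{a_k}{b_k} = b_k - a_k$, all $b_k - a_k$ columns must be mixed, with their sep-values dictated by $\nuxy{a_k}{b_k}$ as in~(\ref{eq.reducible.mixblock-j}). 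In particular, each such mixed column contributes exactly one $1$ and one $2$ to $\ircont(T)$, giving a factor $(x_1x_2)^{b_k - a_k}$.

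Outside the mix-blocks, on each interval $[b_k, a_{k+1})$, the 12-rpp $T$ restricts to a 12-rpp of shape $\lm\big|_{[b_k,a_{k+1})}$ whose seplist is $\nuxycap{b_k}{a_{k+1}}$, which is irreducible with respect to this restricted shape by Lemma~\ref{lemma:irreducible}\textbf{(c)}. Moreover, the argument preceding Lemma~\ref{lemma:irreducible} shows that the restrictions to $[b_k,a_{k+1})$ can be chosen independently for distinct $k$ (no interaction across the forced mix-blocks, since the mixed columns in $[a_k,b_k)$ impose no constraints on the neighboring sub-rpps). Hence the generating function factors as
\[
\sum_{T\in\OneTwoRPPCutvar} \x^{\ircont(T)}
= \prod_{k=1}^{r} (x_1 x_2)^{b_k - a_k}
\cdot \prod_{k=0}^{r} \sum_{T_k} \x^{\ircont(T_k)},
\]
where $T_k$ ranges over 12-rpps of shape $\lm\big|_{[b_k,a_{k+1})}$ with seplist $\nuxycap{b_k}{a_{k+1}}$.

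Applying Theorem~\ref{thm:12rpps} to each irreducible factor gives
\[
\sum_{T_k} \x^{\ircont(T_k)} = (x_1 x_2)^{\#\nuxycap{b_k}{a_{k+1}}} P_{n_k}(x_1, x_2),
\]
since the number of ``free'' (pure) columns on this sub-shape is exactly $n_k = a_{k+1} - b_k - \#\nuxycap{b_k}{a_{k+1}}$. Collecting all factors of $x_1 x_2$ and using Lemma~\ref{lemma:irreducible}\textbf{(b)} (which says $\#\seplistvar = \sum_{k=1}^{r} (b_k - a_k) + \sum_{k=0}^{r} \#\nuxycap{b_k}{a_{k+1}}$), the total exponent is $M = \#\seplistvar$, yielding~(\ref{eq:thm12rppsComposite}).

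The main obstacle I anticipate is the bookkeeping in the factorization step: one must verify cleanly that a 12-rpp whose seplist equals $\seplistvar$ restricts to independent 12-rpps on the intervals $[b_k, a_{k+1})$ with the correct seplists (with no ``leakage'' of the seplist across forced mix-blocks), and conversely that any such family of component-rpps glues back to a valid 12-rpp of shape $\lm$. This requires carefully re-examining the proof of Lemma~\ref{lemma:irreducible}\textbf{(b)}--\textbf{(c)} to see that an entry $i_p$ with $\supp(i_p) \cap [b_k, a_{k+1}) \neq \varnothing$ has its entire support inside $[b_k, a_{k+1})$, so that the ``column index'' $j_p$ witnessing $i_p$ in the seplist necessarily lies in the same interval; the rest is a direct product argument on fillings.
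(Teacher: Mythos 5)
Your proposal is correct and follows essentially the same route as the paper's proof: the paper likewise observes that restriction gives a bijection from $\OneTwoRPPCutvar$ to the product of the sets $\operatorname{RPP}^{12}\bigl( \lm\big|_{[b_k,a_{k+1})} ; \nuxycap{b_k}{a_{k+1}}\bigr)$ (the mix-block columns being forced), and then applies Theorem~\ref{thm:12rpps} to each factor. Your bookkeeping of the exponent of $x_1x_2$ via Lemma~\ref{lemma:irreducible}\textbf{(b)} is accurate and simply makes explicit what the paper leaves implicit.
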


\begin{proof}[Proof of Corollary \ref{cor:thm12rppsCompositeCor}.] The restriction map
\[
\OneTwoRPPCutvar \to \prod_{k=0}^r \operatorname{RPP}^{12}\left( \lm\big|_{[b_k,a_{k+1})} ; \nuxycap{b_k}{a_{k+1}}\right)
\]
is injective (since, as we know, the entries of a $T \in \OneTwoRPPCutvar$ in any column outside of the irreducible components are uniquely determined) and surjective (as one can ``glue'' rpps together).
Now use Theorem \ref{thm:12rpps}.
\end{proof}

 For a 12-rpp $T$, the vectors $\seplist(T)$ and $\ceq(T)$ uniquely determine each other: if $(\ceq(T))_i=h$ then $\seplist(T)$ contains exactly $\lambda_{i+1}-\mu_i-h$ entries equal to $i$, and this correspondence is one-to-one. Therefore, the polynomials on both sides of (\ref{eq:thm12rppsComposite}) are equal to $Q_{\ceqvar}(x_1,x_2)$ where the vector $\ceqvar$ is the one that corresponds to $\seplistvar$.

 Note that the polynomials $P_n(x_1,x_2)$ are symmetric for all $n$. Since the question about the symmetry of $\g_\lm$ can be reduced to the two-variable case, Corollary \ref{cor:thm12rppsCompositeCor} gives an alternative proof of the symmetry of $\g_\lm$:
\begin{corollary}
 The polynomials $\g_\lm \in \Z[t_1,t_2,\dots]\left[[x_1, x_2, x_3, \ldots]\right]$ are symmetric.
\end{corollary}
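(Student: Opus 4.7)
The plan is to bootstrap from the two-variable specialization $\g_\lm(x_1, x_2, 0, 0, \ldots; \t)$ (whose structure is controlled by Corollary \ref{cor:thm12rppsCompositeCor}) to the full symmetry statement via a decomposition argument that isolates the two ``active'' variables.

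First I would fix $i \in \Nplus$ and show that $\g_\lm$ is invariant under the transposition $s_i$ that swaps $x_i$ and $x_{i+1}$. For each rpp $T$ of shape $\lm$, let $\nu, \nu'$ be the partitions for which $\nu/\mu$ and $\nu'/\nu$ are respectively the sets of cells of $T$ with entries $\leq i-1$ and with entries in $\{i, i+1\}$; one checks these are genuine skew sub-shapes because the entries of $T$ weakly increase down columns and along rows. Restricting $T$ to these three nested regions produces an rpp of shape $\nu/\mu$ with entries $\leq i-1$, an rpp of shape $\nu'/\nu$ with entries in $\{i, i+1\}$, and an rpp of shape $\lambda/\nu'$ with entries $\geq i+2$. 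Since entries on the two sides of any inter-region boundary lie in disjoint ranges, no boundary cell is redundant; hence the $\ceq$ statistic is additive across the three pieces, and $\x^{\ircont(T)}$ factors accordingly. Summing over the choices of $\nu, \nu'$ yields
\[
\g_\lm(\x; \t) = \sum_{\mu \subseteq \nu \subseteq \nu' \subseteq \lambda} A_{\nu/\mu} \cdot M_{\nu'/\nu} \cdot B_{\lambda/\nu'},
\]
where only the middle factor $M_{\nu'/\nu}$ depends on $x_i$ or $x_{i+1}$. Relabeling entries $i \mapsto 1$ and $i+1 \mapsto 2$ in the middle piece (which preserves $\ceq$, since $\ceq$ registers only equalities) identifies $M_{\nu'/\nu}$ with $\g_{\nu'/\nu}(x_1, x_2, 0, 0, \ldots; \t)$ after further renaming $x_1, x_2$ to $x_i, x_{i+1}$.

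Second, I would verify that $\g_{\alpha}(x_1, x_2, 0, 0, \ldots; \t)$ is symmetric in $x_1, x_2$ for every skew partition $\alpha$. When $\alpha$ is connected and has no empty columns, Corollary \ref{cor:thm12rppsCompositeCor} expands this power series as $\sum_\ceqvar \t^\ceqvar Q_\ceqvar(x_1, x_2)$ with each nonzero $Q_\ceqvar$ of the form $(x_1 x_2)^M P_{n_0}(x_1, x_2) \cdots P_{n_r}(x_1, x_2)$; each $P_n$ is manifestly symmetric in $x_1, x_2$, hence so is each $Q_\ceqvar$, and thus so is the whole sum. For general $\alpha$, decompose it into connected components (empty columns carry no cells and can be ignored). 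A 12-rpp of shape $\alpha$ amounts to an independent choice of 12-rpp on each component, and both $\ceq$ and $\ircont$ are additive across components, so $\g_\alpha(x_1, x_2, 0, 0, \ldots; \t)$ factors as a product of the symmetric polynomials for the components, and is therefore symmetric. Feeding this back into the first step, every $M_{\nu'/\nu}$ is symmetric in $x_i, x_{i+1}$, so $\g_\lm$ is invariant under $s_i$. Since $s_1, s_2, \ldots$ generate the group of finite permutations of $\x$, full symmetry follows.

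The main obstacle is the bookkeeping of the decomposition: verifying that the cells with entries $\leq k$ form a genuine skew sub-shape $\nu/\mu$, that no boundary cell is redundant (so $\ceq$ is genuinely additive), and that a shift of entries by a constant preserves $\ceq$ and (up to relabeling) the $\x$-monomial. None of these is deep in isolation, but together they are what allows Corollary \ref{cor:thm12rppsCompositeCor}, which a priori computes only a very special evaluation of $\g_\lm$, to imply the full symmetry of $\g_\lm$ as a power series.
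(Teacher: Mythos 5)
Your proposal is correct and takes essentially the same route as the paper: the paper also derives this corollary from Corollary \ref{cor:thm12rppsCompositeCor} by observing that each $P_n(x_1,x_2)$ is symmetric and that ``the question about the symmetry of $\g_\lm$ can be reduced to the two-variable case'' --- the three-region decomposition you spell out is precisely that reduction (the same device the paper uses in Section \ref{sect.def} to reduce Theorem \ref{thm.BK} to Lemma \ref{lem.BK}), and the disconnected case is handled identically via the product over connected components.
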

 
 Of course, our standing assumption that $\lm$ is connected can be lifted here, because in general, $\g_\lm$ is the product of the analogous power series corresponding to the connected components of $\lm$. So we have obtained a new proof of Theorem \ref{thm.gtilde.symm}.
 

Another application of Theorem \ref{thm:12rpps} is a complete description of Bender-Knuth involutions on rpps.

\begin{corollary}
\label{cor:uniqueBK}
Let $\seplistvar$ be an irreducible partition. Then there is a unique map $b:\OneTwoRPPCutvar\to\OneTwoRPPCutvar$ such that for all $T\in\OneTwoRPPCutvar$, the sequence $\ircont(b(T))$ is obtained from $\ircont(T)$ by switching the first two entries.
This unique map $b$ is an involution on $\OneTwoRPPCutvar$. So, for irreducible partition $\seplistvar$ the corresponding Bender-Knuth involution exists and is unique.
\end{corollary}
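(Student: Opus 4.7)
The plan is to deduce this directly from Theorem~\ref{thm:12rpps}, which completely pins down the structure of $\OneTwoRPPCutvar$ when $\seplistvar$ is irreducible. First I would apply Theorem~\ref{thm:12rpps} to parameterize the elements of $\OneTwoRPPCutvar$: for each $m \in \{0, 1, \ldots, \lambda_1 - \#\seplistvar\}$, there is exactly one $T_m \in \OneTwoRPPCutvar$ with $\#\seplistvar$ mixed columns, $m$ many $1$-pure columns, and $(\lambda_1 - \#\seplistvar - m)$ many $2$-pure columns, and these are \emph{all} the elements of $\OneTwoRPPCutvar$.

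Next, I would compute $\ircont(T_m)$ explicitly. A column of $T_m$ contains a $1$ iff it is $1$-pure or mixed, and contains a $2$ iff it is $2$-pure or mixed; hence
\[
\ircont(T_m) = \bigl(\#\seplistvar + m,\ \lambda_1 - m,\ 0, 0, \ldots\bigr).
\]
In particular, the map $m \mapsto \ircont(T_m)$ is injective, so the knowledge of $\ircont(T)$ determines $T$ uniquely within $\OneTwoRPPCutvar$. This is the key observation driving uniqueness.

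Now for the existence-and-uniqueness claim: any candidate map $b$ must send $T_m$ to the unique element of $\OneTwoRPPCutvar$ whose ircont equals $(\lambda_1 - m,\ \#\seplistvar + m, 0, 0, \ldots)$. Setting $m' := \lambda_1 - m - \#\seplistvar$, one checks that $0 \leq m' \leq \lambda_1 - \#\seplistvar$ and $\ircont(T_{m'}) = (\lambda_1 - m,\ \#\seplistvar + m, 0, \ldots)$, so we are \emph{forced} to define $b(T_m) := T_{m'}$; conversely, this choice does satisfy the required property, giving existence. Finally, the involution property is the computation $\lambda_1 - m' - \#\seplistvar = \lambda_1 - (\lambda_1 - m - \#\seplistvar) - \#\seplistvar = m$, so $b(b(T_m)) = T_m$.

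There is no real obstacle here — once Theorem~\ref{thm:12rpps} is in hand, the entire argument is bookkeeping on the three-part column count. The only thing worth flagging is that irreducibility is essential: in the reducible case the mixed columns forced by Lemma~\ref{lemma:irreducible}\textbf{(b)} split $\OneTwoRPPCutvar$ into a product of irreducible pieces, and within each piece one can independently choose whether or not to apply the above swap, so uniqueness fails.
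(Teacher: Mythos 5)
Your proposal is correct and is exactly the intended derivation: the paper offers no separate argument for Corollary~\ref{cor:uniqueBK} precisely because, as you show, Theorem~\ref{thm:12rpps} makes the elements of $\OneTwoRPPCutvar$ pairwise distinguishable by $\ircont$ (equivalently, the monomials in (\ref{eq:thm12rpps}) are distinct), which forces $b(T_m)=T_{\lambda_1-\#\seplistvar-m}$ and yields existence, uniqueness, and the involution property by the bookkeeping you carried out. Your closing remark on the failure of uniqueness in the reducible case also matches the paper's final corollary.
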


Take any 12-rpp $T\in\OneTwoRPPCutvar$ and recall that a 12-table $\flip(T)$ is obtained from $T$ by simultaneously replacing all entries in $1$-pure columns by $2$ and all entries in $2$-pure columns by $1$. 

\begin{corollary}
\label{cor:confluence}
 If $\seplistvar$ is an irreducible partition, then, no matter in which order one resolves descents in $\flip(T)$, the resulting 12-rpp $T'$ will be the same. The map $T\mapsto T'$ is the unique Bender-Knuth involution on $\OneTwoRPPCutvar$.
\end{corollary}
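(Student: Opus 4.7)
The plan is to deduce this corollary directly from the machinery already assembled: the well-definedness of the normalization map (Proposition~\ref{prop.BK.norm}) and the uniqueness statement in Corollary~\ref{cor:uniqueBK}. Since $T \in \OneTwoRPPCutvar \subseteq \OneTwoRPP \subseteq \BenignTables$, the 12-table $\flip(T)$ also lies in $\BenignTables$ (as noted in Subsection~\ref{subsection:benign}, $\flip$ preserves the mixed columns and hence $\seplist$, and hence benignness). Any sequence of descent-resolution steps starting from $\flip(T)$ must terminate after finitely many steps, because $\ell$ strictly decreases at each step by Lemma~\ref{lem.descent-resolution-props}(d); and any terminal 12-table has no descents and is therefore a 12-rpp. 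Thus any such terminal 12-rpp $T'$ satisfies $\flip(T) \overset{\ast}{\Rrightarrow} T'$, so by the uniqueness part of Proposition~\ref{prop.BK.norm}, $T' = \operatorname{norm}(\flip(T))$, independently of the order of resolutions. This proves confluence.

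By Definition~\ref{defi.B}, the common terminal value $T'$ equals $\mathbf{B}(T)$; hence the map $T \mapsto T'$ is the restriction of $\mathbf{B}$ to $\OneTwoRPPCutvar$. I would next verify that this restriction actually lands in $\OneTwoRPPCutvar$. By Lemma~\ref{lem.BK}, $\mathbf{B}$ preserves $\ceq$; and for 12-rpps the vectors $\seplist$ and $\ceq$ determine each other (as observed after Corollary~\ref{cor:thm12rppsCompositeCor} via the formula relating $(\ceq(T))_i$ with the number of entries of $\seplist(T)$ equal to $i$). Consequently $\seplist(\mathbf{B}(T)) = \seplist(T) = \seplistvar$, so $\mathbf{B}(T) \in \OneTwoRPPCutvar$, and $\mathbf{B}|_{\OneTwoRPPCutvar}$ is a well-defined self-map of $\OneTwoRPPCutvar$.

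Finally, Lemma~\ref{lem.BK} also tells us that $\mathbf{B}$ switches the first two entries of $\ircont$, so $\mathbf{B}|_{\OneTwoRPPCutvar}$ satisfies the hypothesis of Corollary~\ref{cor:uniqueBK}. Since that corollary asserts there is a \emph{unique} such map on $\OneTwoRPPCutvar$ when $\seplistvar$ is irreducible (and that it is an involution), we conclude $\mathbf{B}|_{\OneTwoRPPCutvar} = b$, the unique Bender-Knuth involution on $\OneTwoRPPCutvar$. There is no real obstacle here; the only thing to be careful about is the bookkeeping identifying $T \mapsto T'$ with $\mathbf{B}$ and invoking the $\ceq$--$\seplist$ correspondence to stay inside $\OneTwoRPPCutvar$.
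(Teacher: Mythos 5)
Your argument is correct as written, but it takes a genuinely different route from the paper's, and in doing so it forfeits the point of this corollary. You establish confluence by appealing to Proposition~\ref{prop.BK.norm}, whose proof is precisely the diamond-lemma argument of Section~\ref{sect.proof} (Lemma~\ref{prop.descent-resolution-hyps} together with the induction on $\ell$). The paper instead derives confluence from Theorem~\ref{thm:12rpps}: every terminal 12-table reached from $\flip(T)$ is a 12-rpp lying in $\OneTwoRPPCutvar$ (since descent-resolution steps preserve $\ceq$, hence $\seplist$) and has the same $\ircont$ as $\flip(T)$, namely $\ircont(T)$ with its first two entries switched; and for irreducible $\seplistvar$, Theorem~\ref{thm:12rpps} says $\OneTwoRPPCutvar$ contains exactly \emph{one} element with that $\ircont$, so all terminal results coincide. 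This makes Corollary~\ref{cor:confluence} an independent, alternative proof of the well-definedness of $\mathbf{B}$ --- which is exactly what Section~\ref{sect.structure} promises --- whereas your version presupposes the confluence statement it is meant to re-derive from the structural classification. Your reasoning is logically sound (Proposition~\ref{prop.BK.norm} was proved on its own in Section~\ref{sect.proof}, so there is no circularity), and your bookkeeping --- using the $\ceq$--$\seplist$ correspondence to see that $\mathbf{B}$ maps $\OneTwoRPPCutvar$ to itself, then invoking Corollary~\ref{cor:uniqueBK} --- matches the paper's; but to obtain the self-contained Section~\ref{sect.structure} argument you should replace the appeal to Proposition~\ref{prop.BK.norm} by the uniqueness assertion of Theorem~\ref{thm:12rpps}.
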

\begin{proof}[Proof of Corollary \ref{cor:confluence}]
 Descent-resolution steps applied to $\flip(T)$ in any order eventually give an element of $\OneTwoRPPCutvar$ with the desired $\ircont$. There is only one such element. So we get a map $\OneTwoRPPCutvar\to \OneTwoRPPCutvar$ that satisfies the assumptions of Corollary \ref{cor:uniqueBK}.
\end{proof}

Finally, notice that, for a general representable partition $\seplistvar$, descents in a 12-table $T$ with $\seplist(T) = \nu$ may only occur inside each irreducible component independently. Thus, we conclude the chain of corollaries by stating that our constructed involutions are canonical in the following sense:

\begin{corollary}
 For a representable partition $\nu$, the map $\mathbf{B}:\OneTwoRPPCutvar\to\OneTwoRPPCutvar$ is the unique involution that interchanges the number of $1$-pure columns with the number of $2$-pure columns inside each irreducible component.
\end{corollary}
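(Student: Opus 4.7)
My plan is to reduce the representable case to the irreducible case via the product decomposition of $\OneTwoRPPCutvar$ established in the proof of Corollary~\ref{cor:thm12rppsCompositeCor}. Write $\seplistvar_k := \nuxycap{b_k}{a_{k+1}}$ for the $k$-th irreducible component ($0 \leq k \leq r$). The restriction map
\[
\Phi : \OneTwoRPPCutvar \;\longrightarrow\; \prod_{k=0}^{r} \operatorname{RPP}^{12}\bigl(\lm\big|_{[b_k,a_{k+1})};\, \seplistvar_k\bigr), \qquad \Phi(T) = (T_0,\ldots,T_r)
\]
is a bijection, and Corollary~\ref{cor:uniqueBK} provides a unique involution $b_k$ on the $k$-th factor that swaps the first two entries of $\ircont$.

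The uniqueness half is then quick. If $f$ is any involution on $\OneTwoRPPCutvar$ exchanging the numbers of $1$-pure and $2$-pure columns inside each irreducible component, then $\Phi(f(T))_k$ has its $1$-pure and $2$-pure column counts swapped relative to $T_k$; by Theorem~\ref{thm:12rpps} applied to the irreducible partition $\seplistvar_k$, a 12-rpp in the $k$-th factor is determined uniquely by its number of $1$-pure columns, so $\Phi(f(T))_k = b_k(T_k)$. Thus $f = \Phi^{-1} \circ (b_0,\ldots,b_r) \circ \Phi$ is forced.

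For the existence half, I would prove in detail the remark made by the authors just above the corollary: every descent in any 12-table $P$ with $\seplist(P) = \seplistvar$ lies inside a single irreducible component. Within a reducible block $[a_k,b_k)$ all columns are forced mixed with separator rows that are weakly decreasing, so no descent arises inside the block. At the right boundary column $k = b_k-1$, the smallest entry $i_{\operatorname{last}}$ of $\nuxy{a_k}{b_k}$ satisfies $\supp(i_{\operatorname{last}}) \subseteq [a_k,b_k)$, which forces $\lambda_{i_{\operatorname{last}}+1} = b_k-1$; hence for every row $r \geq i_{\operatorname{last}}+1$ (the only rows in which column $b_k-1$ carries a $2$) we have $\lambda_r \leq b_k-1$, so $(r,b_k) \notin \lm$ and no cross-boundary descent at $b_k-1$ can exist. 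The left boundary $k = a_k-1$ is symmetric, using $\mu_{i_{\operatorname{first}}} \geq a_k-1$ together with the monotonicity of $\mu$. Since $\seplist$ is preserved by descent-resolution, this ``descents are local'' property holds at every intermediate step, and the entire descent-resolution process applied to $\flip(T)$ decomposes as an independent such process on each component. Observing that $\Phi(\flip(T))_k = \flip(T_k)$ (flip acts columnwise and fixes mixed columns), Corollary~\ref{cor:confluence} applied to each $\seplistvar_k$ yields $\Phi(\mathbf{B}(T))_k = b_k(T_k)$; therefore $\mathbf{B}$ interchanges the numbers of $1$-pure and $2$-pure columns inside each irreducible component, as required.

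The main obstacle is the ``no cross-boundary descents'' lemma sketched above; once that geometric observation is in place, the rest is a direct assembly of results already established in this section.
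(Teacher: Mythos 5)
Your proposal is correct and follows the same route the paper intends: the paper offers no written proof beyond the remark that descents can only occur inside irreducible components, and your argument is a faithful elaboration of exactly that remark combined with the product decomposition from Corollary~\ref{cor:thm12rppsCompositeCor}, Theorem~\ref{thm:12rpps} for uniqueness, and Corollary~\ref{cor:confluence} for existence. Your ``no cross-boundary descents'' lemma (observing that the relevant cells $(r,b_k)$ and $(r,a_k-1)$ simply lie outside $\lm$, so the obstruction is purely a shape constraint that persists through every descent-resolution step) is precisely the detail the authors left implicit, and it is argued correctly.
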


\subsection{The proof}
Let $\seplistvar=(i_1,\dots,i_s)$ be an irreducible partition. We start with the following simple observation:
\begin{lemma}
\label{lemma:leftRight}
 Let $T\in\OneTwoRPPCutvar$ for an irreducible partition $\seplistvar$. Then any $1$-pure column of $T$ is to the left of any $2$-pure column of $T$.
\end{lemma}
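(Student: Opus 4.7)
The plan is to argue by contradiction. Suppose some $T \in \OneTwoRPPCutvar$ contains both a $2$-pure column at position $a$ and a $1$-pure column at position $b$ with $a < b$. Choose such a pair with $b - a$ as small as possible; then by minimality no column strictly between $a$ and $b$ can be $1$-pure or $2$-pure, and combined with the standing assumption that $\lm$ has no empty columns, this means every column $c$ with $a < c < b$ is mixed.

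First I would dispose of the degenerate case $b = a+1$. Since $\lm$ is connected, two adjacent nonempty columns must share at least one row (otherwise no edge of the cell-adjacency graph of $\lm$ crosses between columns $a$ and $a+1$, and $\lm$ splits into two nonempty components). In any such shared row $r$ we would have $T(r,a) = 2$ and $T(r,a+1) = 1$, violating the row-weakly-increasing property of an rpp.

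For the main case $b > a+1$ I would extract information from each intermediate column. For each mixed column $c$ with $a < c < b$, write $i^{(c)}$ for its separator, so that $T(i^{(c)}, c) = 1$ and $T(i^{(c)}+1, c) = 2$. Row-monotonicity of $T$ prevents $(i^{(c)}, a)$ from belonging to $\lm$ (else $2 = T(i^{(c)},a) \leq T(i^{(c)},c) = 1$), and unpacking the definition of $\lm$ one reads off $\mu_{i^{(c)}} \geq a$; the symmetric argument with column $b$ forces $(i^{(c)}+1, b) \notin \lm$ and gives $\lambda_{i^{(c)}+1} \leq b-1$. Together these yield $\supp(i^{(c)}) = [\mu_{i^{(c)}}+1,\, \lambda_{i^{(c)}+1}] \subseteq [a+1,\, b-1] \subseteq [a+1,\, b)$. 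Since the $b - a - 1$ mixed columns $c$ contribute that many distinct entries of $\seplistvar$ with support inside $[a+1,b)$, we conclude $\#\nuxy{a+1}{b} \geq b - a - 1$, directly contradicting the irreducibility bound $\#\nuxy{a+1}{b} < b - (a+1)$.

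The only real obstacle is the degenerate case $b = a+1$: the irreducibility inequality there degenerates to $0 < 0$ and cannot be invoked, so one genuinely needs the connectivity hypothesis on $\lm$ to rule it out. Everything else is routine bookkeeping with the definitions of $\supp$, of $\nuxy{a+1}{b}$, and of the separator of a mixed column.
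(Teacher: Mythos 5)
Your proof is correct and follows essentially the same route as the paper's: take an offending pair with a $2$-pure column $a$ to the left of a $1$-pure column $b$ minimizing $b-a$, observe that all intermediate columns are mixed, show that the corresponding $b-a-1$ entries of $\seplistvar$ have support contained in $[a+1,b)$, and contradict irreducibility. The one point where you are more careful than the paper is the degenerate case $b=a+1$, which the paper's counting argument does not literally cover and which you correctly dispose of using the connectedness of $\lm$.
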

\begin{proof}[Proof of Lemma \ref{lemma:leftRight}]
Suppose it is false and we have a $1$-pure column to the right of a $2$-pure column. Among all pairs $\left(a, b\right)$ such that column $a$ is $2$-pure and column $b$ is $1$-pure, and $b > a$, consider the one with smallest $b-a$. Then, the columns $a+1,\dots,b-1$ must all be mixed. 
Therefore the set $\NS(T)$ contains $\{(i_{p+1},a+1),(i_{p+2},a+2),\dots,(i_{p+b-1-a},b-1)\}$ for some $p\in\NN$. And because $a$ is $2$-pure and $b$ is $1$-pure, each $i_{p+k}$ (for $k=1,\dots, b-1-a$) must be $\leq$ to the y-coordinate of the highest cell in column $a$ and $>$ than the y-coordinate of the lowest cell in column $b$. Thus, the support of any $i_{p+k}$ for $k=1,\dots, b-1-a$ is a subset of $[a+1,b)$, which contradicts the irreducibility of $\seplistvar$.
\end{proof}

\begin{proof}[Proof of Theorem \ref{thm:12rpps}]
We proceed by strong induction on the number of columns in $\lm$. If the number of columns is $1$, then the statement of Theorem \ref{thm:12rpps} is obvious. Suppose that we have proven that for all skew partitions $\widetilde{\lm}$ with less than $\lambda_1$ columns and for all partitions $\widetilde\seplistvar$  irreducible with respect to $\widetilde{\lm}$ and for all $0\leq \widetilde m\leq \widetilde{\lambda}_1-\#\widetilde\seplistvar$, there is exactly one 12-rpp $\widetilde T$ of shape $\widetilde{\lm}$ with exactly $\widetilde m$ $1$-pure columns, exactly $\#\widetilde\seplistvar$ mixed columns and exactly $(\widetilde{\lambda}_1-\#\widetilde\seplistvar-\widetilde m)$ $2$-pure columns. Now we want to prove the same for $\lm$.

Take any 12-rpp $T\in\OneTwoRPPCutvar$ with $\seplist(T)=\seplistvar$ and with $m$ $1$-pure columns for $0\leq m\leq \lambda_1-\#\seplistvar$. Suppose first that $m>0$. Then there is at least one $1$-pure column in $T$. Let $q\geq 0$ be such that the leftmost $1$-pure column is column $q+1$. Then by Lemma \ref{lemma:leftRight} the columns $1,2,\dots,q$ are mixed. If $q>0$ then the supports of $i_1,i_2,\dots,i_q$ are all contained inside $[1,q+1)$ and we get a contradiction with the irreducibility of $\seplistvar$. The only remaining case is that $q=0$ and the first column of $T$ is $1$-pure. Let $\widetilde\lm$ denote $\lm$ with the first column removed. Then $\seplistvar$ is obviously admissible but may not be irreducible with respect to $\widetilde{\lm}$, because it may happen that $\#\nuxy{2}{b+1}^{\widetilde{\lm}}=b-1$
for some $b>1$. In this case we can remove these $b-1$ nonempty columns from $\widetilde{\lm}$ and remove the first $b-1$ entries from $\nu$ to get an irreducible partition again\footnote{This follows from Lemma \ref{lemma:irreducible} \textbf{(c)} (applied to the skew shape $\widetilde{\lm}$ and $k=1$). Here we are using the fact that if we apply Lemma \ref{lemma:irreducible} \textbf{(a)} to $\widetilde{\lm}$ instead of $\lm$, then we get $r = 1$ (because if $r \geq 2$, then $\#\nuxy{a_2}{b_2} = \#\nuxy{a_2}{b_2}^{\widetilde{\lm}} = b_2-a_2$ in contradiction to the irreducibility of $\lm$).}, to which we can apply the induction hypothesis. We are done with the case $m>0$. If $m<\lambda_1-\#\seplistvar$ then we can apply a mirrored argument to the last column, and it remains to note that the cases $m>0$ and $m<\lambda_1-\#\seplistvar$ cover everything (since the irreducibility of $\nu$ shows that $\lambda_1-\#\seplistvar>0$).

This inductive proof shows the uniqueness of the 12-rpp with desired properties. Its existence follows from a parallel argument, using the observation that the first $b-1$ columns of $\widetilde{\lm}$ can actually be filled in. This amounts to showing that for a representable $\seplistvar$, the set $\OneTwoRPPCutvar$ is non-empty in the case when $\lambda_1 = \#\seplistvar$ (so all columns of $T \in \OneTwoRPPCutvar$ must be mixed). This is clear when there is just one column, and the general case easily follows by induction on the number of columns\footnote{In more detail: \par
If we had $1 \notin \supp(\seplistvar_1)$, then we would have $\supp(\seplistvar_1) \subseteq [2, \lambda_1+1)$, and thus $\supp(\seplistvar_j) \subseteq [2, \lambda_1+1)$ for every $j$ (since $\seplistvar$ is weakly decreasing and since $\supp(\seplistvar_1)$ is nonempty), which would lead to $\nuxy{2}{\lambda_1+1} = \seplistvar$ and thus $\#\nuxy{2}{\lambda_1+1} = \#\seplistvar = \lambda_1 > \lambda_1 + 1 - 2$, contradicting the representability of $\seplistvar$. Hence, we have $1 \in \supp(\seplistvar_1)$, so that we can fill the first column of $\lm$ with $1$'s and $2$'s in such a way that it becomes mixed and the $1$'s are displaced by $2$'s at level $\seplistvar_1$. Now, let $\widetilde{\lm}$ be the skew partition $\lm$ without its first column, and $\widetilde{\seplistvar}$ be the partition $\left(\seplistvar_2, \seplistvar_3, \ldots\right)$. Then, the partition $\widetilde{\seplistvar}$ is representable with respect to $\widetilde{\lm}$. (Otherwise we would have $\#\nuxy{2}{b+1}^{\widetilde{\lm}}>b-1$ for some $b \geq 1$, but then we would have $\supp(\seplistvar_1) \subseteq [1,b+1)$ as well and therefore $\#\nuxy{1}{b+1}>(b-1)+1=b$, contradicting the representability of $\lm$.) Thus we can fill in the entries in the cells of $\widetilde{\lm}$ by induction.}.
\end{proof}

\end{document}